\newtheorem{theorem}{Theorem}[section]
\newtheorem{lemma}[theorem]{Lemma}
\newtheorem{corollary}[theorem]{Corollary}
\newtheorem{proposition}[theorem]{Proposition}
\theoremstyle{definition}
\newtheorem{definition}[theorem]{Definition}
\theoremstyle{remark}
\newtheorem{remark}[theorem]{Remark}
\numberwithin{equation}{section}
\begin{document}

\title[Automorphism and isometry groups of Higgs bundle moduli spaces]{Classification of the automorphism and isometry groups of Higgs bundle moduli spaces}
\author{David Baraglia}

\address{School of Mathematical Sciences, The University of Adelaide, Adelaide SA 5005, Australia}

\email{david.baraglia@adelaide.edu.au}

\begin{abstract}
Let $\mathcal{M}_{n,d}$ be the moduli space of semi-stable rank $n$, trace-free Higgs bundles with fixed determinant of degree $d$ on a Riemann surface of genus at least $3$. We determine the following automorphism groups of $\mathcal{M}_{n,d}$: (i) the group of automorphisms as a complex analytic variety, (ii) the group of holomorphic symplectomorphisms, (iii) the group of K\"ahler isomorphisms, (iv) the group of automorphisms of the quaternionic structure, (v) the group of hyper-K\"ahler isomorphisms. When $n$ and $d$ are coprime we show that $\mathcal{M}_{n,d}$ admits an anti-holomorphic isomorphism if and only if the corresponding Riemann surface admits such a map. We then use this to determine the isometry group of $\mathcal{M}_{n,d}$.
\end{abstract}
\thanks{This work is supported by the Australian Research Council Discovery Project DP110103745.}

\subjclass[2010]{Primary 14H60 53C07; Secondary 14H70, 53C26}

\date{\today}



\maketitle


\section{Introduction}

Introduced by Hitchin in his groundbreaking 1987 paper \cite{hit1}, the moduli spaces of semi-stable Higgs bundles on a compact Riemann surface $\Sigma$ are complex algebraic varieties possessing a remarkable wealth of geometric structures. Most notably these moduli spaces are algebraically completely integrable systems and their smooth points are hyper-K\"ahler manifolds. As with their K\"ahler counterparts, the moduli spaces of semi-stable bundles on $\Sigma$, the study of Higgs bundle moduli spaces has proven to be an extremely fruitful pursuit. Through the non-abelian Hodge theory developed by Corlette, Donaldson, Hitchin, Simpson and others \cite{cor,don,hit1,sim2}, the Higgs bundle moduli spaces can be identified with the character varieties of representations of the fundamental group of $\Sigma$ into a complex reductive Lie group. In another direction, it has been proposed that the geometric Langlands correspondence should be understood as a mirror symmetry of Higgs bundle moduli spaces for Langlands dual groups \cite{ht,kw}. Consequently there is considerable impetus for the study of Higgs bundle moduli spaces.\\

Given a holomorphic line bundle $L_0$ on $\Sigma$, we let $\mathcal{M}_{n,L_0}$ denote the moduli space of rank $n$, trace-free Higgs bundles with determinant $L_0$. In this paper we determine the automorphism group of the moduli space $\mathcal{M}_{n,L_0}$ as a complex analytic variety, provided the genus $g$ is $\Sigma$ is at least $3$. We also determine several related symmetry groups of $\mathcal{M}_{n,L_0}$; the group of holomorphic symplectomorphisms, the group of K\"ahler isomorphisms, the group of automorphisms of the quaternionic structure and the group of hyper-K\"ahler isomorphisms. In the case that $n$ and the degree of $L_0$ are coprime, we also determine the anti-holomorphic automorphisms of $\mathcal{M}_{n,L_0}$ and the group of isometries of $\mathcal{M}_{n,L_0}$.\\

To put our results in context, we recall the classification of the automorphism group of $\mathcal{SU}_{n,L_0}$, the moduli space of rank $n$ stable bundles $E$ with fixed determinant $det(E) \cong L_0$:  

\begin{theorem}[Kouvidakis-Pantev \cite{kp}, Hwang-Ramanan \cite{hr}]\label{theoremstableauto} 
For $g \ge 3$ the automorphism group $Aut(\mathcal{SU}_{n,L_0})$ of $\mathcal{SU}_{n,L_0}$ is generated by the following automorphisms: 
\begin{enumerate} 
\item{$E \mapsto \sigma^*E \otimes L$, where $\sigma : \Sigma \to \Sigma$ is an automorphism and $L$ is a holomorphic line bundle with $L^n \cong L_0 \otimes \sigma^* L_0^*$,} 
\item{$E \mapsto \sigma^*E^* \otimes L$, where $\sigma : \Sigma \to \Sigma$ is again an automorphism and $L$ is a holomorphic line bundle with $L^n \cong L_0 \otimes \sigma^* L_0$.} 
\end{enumerate} 
Note that automorphisms of type (2) can only occur if $n$ divides $2d$, where $d = deg(L_0)$ is the degree of $L_0$. 
\end{theorem}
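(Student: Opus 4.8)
The plan is to recover the Riemann surface $\Sigma$, the bundle $E$, and the twisting data directly from the intrinsic projective geometry of $\mathcal{SU}_{n,L_0}$, following the method of minimal rational curves and their varieties of minimal rational tangents (VMRT). Throughout I would work on the smooth (stable) locus, which is a Fano variety with $\mathrm{Pic} \cong \mathbb{Z}$ generated by the ample theta line bundle $\Theta$ and with $-K \cong 2\Theta$; in the coprime case this is a smooth projective Fano variety of Picard rank one.

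First I would observe that any automorphism $\phi$ of $\mathcal{SU}_{n,L_0}$ acts on $\mathrm{Pic} \cong \mathbb{Z}$ and must fix its ample generator $\Theta$, hence preserves the anticanonical polarisation and the $\Theta$-degree of curves. In particular $\phi$ carries minimal rational curves to minimal rational curves. The next step is to identify these minimal rational curves as the Hecke curves: for $g \geq 3$ the rational curves of minimal $\Theta$-degree through a general point $[E]$ are exactly those obtained from Hecke modifications of $E$ at points $x \in \Sigma$. This realises the family of minimal rational curves through $[E]$ as a family parametrised by $\Sigma$ together with flag data in the fibres of $E$.

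I would then analyse the VMRT $\mathcal{C}_{[E]} \subset \mathbb{P}(T_{[E]}\mathcal{SU}_{n,L_0})$ at a general point. The crucial structural input is that $\mathcal{C}_{[E]}$ is projectively isomorphic to a canonically defined variety fibred over $\Sigma$ (built from $\mathbb{P}(E)$), so that $\Sigma$ and the bundle $E$ can be reconstructed intrinsically from $\mathcal{C}_{[E]}$, up to the natural duality $E \mapsto E^*$. Since the differential of $\phi$ sends $\mathcal{C}_{[E]}$ isomorphically onto $\mathcal{C}_{\phi([E])}$, it induces an isomorphism of the reconstructed copies of $\Sigma$, hence an automorphism $\sigma \colon \Sigma \to \Sigma$, possibly composed with the duality. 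Composing $\phi$ with the standard automorphism of type (1) induced by $\sigma^{-1}$ (and, if the duality appears, with one of type (2)), I reduce to the case where the modified map $\psi$ induces the identity on $\Sigma$ and preserves the VMRT fibrewise over $\Sigma$.

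The main obstacle is this final rigidity step: showing that such a $\psi$, acting as the identity on the reconstructed $\Sigma$ and respecting the Hecke/VMRT structure fibrewise, must send each $E$ to $E \otimes L$ for a fixed line bundle $L$. This is where the real work lies. The idea is to combine the reconstruction of $E$ from the infinitesimal Hecke data with a Cartan--Fubini type extension (rigidity of VMRT-preserving maps on Fano manifolds of Picard rank one) and analytic continuation across the moduli space, to conclude that $\psi$ is given by a global line-bundle twist. The determinant constraint $\det(\sigma^* E \otimes L) \cong L_0$ then forces $L^n \cong L_0 \otimes \sigma^* L_0^*$, giving type (1), while the duality case forces $L^n \cong L_0 \otimes \sigma^* L_0$, whose degree count $n\deg L = 2d$ shows it can occur only when $n \mid 2d$. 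For the non-coprime case one runs the same argument on the smooth stable locus, which is dense enough to control the automorphisms of the full moduli space.
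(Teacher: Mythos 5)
The paper does not actually prove this statement: Theorem \ref{theoremstableauto} is imported as a known result, with the two cited sources \cite{kp} and \cite{hr} supplying two independent proofs, and the rest of the paper uses it as a black box. So there is no internal proof to compare against; the relevant comparison is with those references. Your proposal is, in architecture, a reconstruction of the Hwang--Ramanan proof \cite{hr}: automorphisms fix the ample generator $\Theta$ of $\mathrm{Pic} \cong \mathbb{Z}$, hence permute minimal rational curves; minimal rational curves through a general point are Hecke curves; the VMRT at a general point reconstructs $\Sigma$ and $E$ up to duality; and a rigidity/extension argument pins down the residual automorphism as a line-bundle twist, with the determinant constraint giving the conditions $L^n \cong L_0 \otimes \sigma^*L_0^*$ (type (1)) and $L^n \cong L_0 \otimes \sigma^*L_0$, hence $n \mid 2d$ (type (2)). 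This is the right strategy, and your degree count for type (2) is correct. It is genuinely different from the Kouvidakis--Pantev route \cite{kp}, which works on the cotangent bundle, extends automorphisms to the Higgs/spectral setting, and exploits the Hitchin map and the geometry of the discriminant --- a method much closer in spirit to what Baraglia does in Sections \ref{secpmt}--\ref{secstructures} for the Higgs moduli space itself.

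That said, judged as a proof rather than a roadmap, there are genuine gaps, and you flag the main one yourself. First, the identification of minimal rational curves through a general point with Hecke curves is not an ``observation'': it is the main theorem of \cite{hr}, and their argument requires $g \ge 4$; the case $g = 3$ of the automorphism theorem is covered only by the different method of \cite{kp}, so your sketch as written does not establish the full range $g \ge 3$ in the statement. Second, the final rigidity step --- that a map preserving the VMRT fibrewise and inducing the identity on the reconstructed $\Sigma$ must be $E \mapsto E \otimes L$ for a \emph{fixed} $L$ --- is exactly ``where the real work lies,'' as you say, and Cartan--Fubini by itself does not deliver it: Cartan--Fubini extends a VMRT-preserving germ to a global automorphism, but identifying that automorphism as a line-bundle twist requires the reconstruction of $E$ from the Hecke data carried out in \cite{hr}, which you invoke but do not perform. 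Third, in the non-coprime case the stable locus is only quasi-projective, while the minimal-rational-curve and Cartan--Fubini machinery lives on projective varieties; the correct setting is the projective (singular) moduli space with curves through general smooth points, together with the fact (used elsewhere in this paper, via \cite{nr}) that for $g \ge 3$ the smooth locus equals the stable locus, so that automorphisms of the full space restrict there. None of these is a wrong turn, but each is a substantial theorem you are citing implicitly rather than proving.
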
 
Evidently the automorphism group of the moduli space $\mathcal{SU}_{n,L_0}$ is finite, as long as $g \ge 3$. In contrast we will see that the Higgs bundle moduli space $\mathcal{M}_{n,L_0}$ has a much richer automorphism group. Not only is this group infinite, it is in a sense infinite dimensional. To describe this group we consider three ways of producing automorphisms of $\mathcal{M}_{n,L_0}$: \\

{\bf 1. The $\mathbb{C}^*$-action.} There is a natural $\mathbb{C}^*$-action on $\mathcal{M}_{n,L_0}$ of the form $( E,\Phi ) \mapsto (E , \lambda \Phi)$, for $\lambda \in \mathbb{C}^*$. This action has proven to be an indispensable tool in the study of the topology of Higgs bundle moduli spaces. Notably, the circle subgroup $U(1) \subset \mathbb{C}^*$ has an associated moment map which has been used extensively to study $\mathcal{M}_{n,L_0}$ through Morse-theoretic techniques \cite{hit1,got,bgg}. \\

{\bf 2. Automorphisms of $\mathcal{SU}_{n,L_0}$.} Let $\mathcal{SU}_{n,L_0}^{\rm sm}$ denote the smooth points of $\mathcal{SU}_{n,L_0}$. The second class of automorphisms we wish to consider exploits the natural inclusion $T^* \mathcal{SU}^{\rm sm}_{n,L_0} \subset \mathcal{M}_{n,L_0}$ of the cotangent bundle of $\mathcal{SU}_{n,L_0}^{\rm sm}$ as a dense open subset of $\mathcal{M}_{n,L_0}$. By differentiation, an automorphism $\phi : \mathcal{SU}_{n,L_0} \to \mathcal{SU}_{n,L_0}$ defines an automorphism of the cotangent bundle $\phi_* = (\phi^*)^{-1} : T^* \mathcal{SU}^{\rm sm}_{n,L_0} \to \mathcal{SU}^{\rm sm}_{n,L_0}$. Clearly the automorphisms described in Theorem \ref{theoremstableauto} extend to the whole moduli space $\mathcal{M}_{n,L_0}$. In this way $Aut( \mathcal{SU}_{n,L_0} )$ can be identified with a subgroup of $Aut( \mathcal{M}_{n,L_0})$.\\

{\bf 3. Vertical flows of the Hitchin system.} To describe these automorphisms we first recall the {\em Hitchin system} \cite{hit2}. The Higgs bundle moduli space carries a naturally defined holomorphic symplectic form $\Omega_I$ extending the canonical symplectic structure on $T^* \mathcal{SU}^{\rm sm}_{n,L_0}$. As we shall recall in the paper, $\mathcal{M}_{n,L_0}$ is an algebraically completely integrable system. Specifically, there exists holomorphic Poisson commuting functions $h_1 , \dots , h_m $ on $\mathcal{M}_{n,L_0}$, where $m = \tfrac{1}{2} dim( \mathcal{M}_{n,L_0} )$, such that $dh_1 , \dots , dh_m$ are generically independent and the generic fibre of the Hitchin map 
\begin{equation*}
h = (h_1 , \dots , h_m ) : \mathcal{M}_{n,L_0} \to \mathbb{C}^m
\end{equation*}
is an abelian variety. Let $\mathcal{A} = \mathbb{C}^m$ denote the base of the Hitchin system. To each holomorphic function $f$ on $\mathcal{A}$ there is an associated Hamiltonian vector field $X_f$. The properness of the Hitchin map ensures that $X_f$ can be integrated to a symplectomorphism $e^{X_f}$ of $\mathcal{M}^{\rm sm}_{n,L_0}$, the smooth locus of $\mathcal{M}_{n,d}$. We will see that $e^{X_\mu}$ actually extends to an automorphism of the full moduli space. More generally if $\mu$ is a holomorphic $1$-form on $\mathcal{A}$ we have an associated vector field $X_\mu$ determined by the relation $i_{X_\mu} \Omega_I = h^*(\mu)$. Such a vector field can likewise be integrated to an automorphism $e^{X_\mu}$ of $\mathcal{M}^{\rm sm}_{n,L_0}$, which is not a symplectomorphism unless $\mu$ is exact. Similarly we find that $e^{X_\mu}$ extends to an automorphism of the full moduli space. We also show that the set of all automorphisms of the form $e^{X_\mu}$ gives an abelian subgroup of $Aut(\mathcal{M}_{n,L_0})$, isomorphic to the set of holomorphic $1$-forms on $\mathcal{A}$ under addition. We denote this subgroup by $Vert_0(\mathcal{M}_{n,L_0}) \subset Aut(\mathcal{M}_{n,L_0})$. While Hamiltonian flows of the Hitchin system have been considered by several authors (e.g., \cite{hit2,hk,bzn}), the larger group $Vert_0(\mathcal{M}_{n,L_0})$ seems to have been given little consideration. The group $Vert_0(\mathcal{M}_{n,L_0})$ bears a close relationship to a construction central to Ng\^o's proof of the fundamental lemma. Notice that on the non-singular fibres of $h : \mathcal{M}_{n,L_0} \to \mathcal{A}$, which are abelian varieties, the group $Vert_0(\mathcal{M}_{n,L_0})$ acts by translation on each fibre. This is similar to the construction in \cite{ngo1,ngo2} of a group scheme $P^{\rm ell} \to \mathcal{A}^{\rm ell}$, over the so-called elliptic locus $\mathcal{A}^{\rm ell} \subset \mathcal{A}$ which carries a fibrewise action $P^{\rm ell} \times_{\mathcal{A}^{\rm ell}} \mathcal{M}_{n,L_0}^{\rm ell} \to \mathcal{M}_{n,L_0}^{\rm ell}$. On the non-singular fibres this action is again by translations.\\

Our main theorem is that the above three classes of automorphisms generate the automorphism group. More precisely we have:

\begin{theorem}\label{theoremmain0}
Let $\Sigma$ have genus $g \ge 3$. The subgroup $Vert_0(\mathcal{M}_{n,L_0}) \subset Aut(\mathcal{M}_{n,L_0})$ is normal. Moreover we have an isomorphism:
\begin{equation*}
Aut(\mathcal{M}_{n,L_0}) = \left( \mathbb{C}^* \times Aut(\mathcal{SU}_{n,L_0}) \right) \ltimes Vert_0( \mathcal{M}_{n,L_0} ).
\end{equation*}
\end{theorem}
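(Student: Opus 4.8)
The plan is to build a homomorphism from $Aut(\mathcal{M}_{n,L_0})$ to the automorphism group of the Hitchin base $\mathcal{A}$, to analyse its image and kernel separately, and to recognise $Vert_0(\mathcal{M}_{n,L_0})$ as the identity component of the kernel; the normality and the semidirect product then fall out of this analysis.

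First I would show that the Hitchin map is intrinsic to the complex variety $\mathcal{M}_{n,L_0}$. Since the generic fibre of $h$ is a compact abelian variety, every global holomorphic function on $\mathcal{M}_{n,L_0}$ is constant on fibres, and because $h$ is proper with connected fibres the algebra of regular functions is exactly $h^* \mathbb{C}[\mathcal{A}]$; thus $h$ is the affinisation morphism and $\mathcal{A} = \mathrm{Spec}\, H^0(\mathcal{M}_{n,L_0}, \mathcal{O})$. Consequently any $\phi \in Aut(\mathcal{M}_{n,L_0})$ descends to a unique $\overline{\phi} \in Aut(\mathcal{A})$ with $h \circ \phi = \overline{\phi} \circ h$, giving a homomorphism $\rho : Aut(\mathcal{M}_{n,L_0}) \to Aut(\mathcal{A})$. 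The three distinguished classes are easy to locate: the vertical flows $e^{X_\mu}$ preserve each fibre and so lie in $\ker \rho$; the $\mathbb{C}^*$-action $\Phi \mapsto \lambda \Phi$ acts on $\mathcal{A} = \bigoplus_{i=2}^{n} H^0(\Sigma, K^i)$ (with $K$ the canonical bundle of $\Sigma$) by the weighted scaling $a_i \mapsto \lambda^i a_i$; and a Kouvidakis--Pantev automorphism $E \mapsto \sigma^* E \otimes L$ acts on $\mathcal{A}$ through $\sigma^* : H^0(\Sigma, K^i) \to H^0(\Sigma, K^i)$, the twist by $L$ acting trivially since it leaves the characteristic polynomial of $\Phi$ unchanged.

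Next I would determine the image of $\rho$. Because $\phi$ carries smooth (abelian variety) fibres to smooth fibres, $\overline{\phi}$ preserves the discriminant cone $\Delta \subset \mathcal{A}$ of singular spectral curves; its vertex together with the grading then constrain $\overline{\phi}$ to be graded-linear, the crux being a homogeneity argument built from the contracting $\mathbb{C}^*$-action. The heart of the step is a Torelli-type rigidity: a graded-linear automorphism of $\bigoplus_{i=2}^{n} H^0(\Sigma, K^i)$ compatible with the ring and discriminant structure is induced by an automorphism $\sigma$ of $\Sigma$ (using $g \ge 3$, where the (bi)canonical geometry recovers $\Sigma$), together with the duality present when $n \mid 2d$. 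Hence $\mathrm{im}\,\rho$ coincides with the image of $\mathbb{C}^* \times Aut(\mathcal{SU}_{n,L_0})$, the $\mathbb{C}^*$-factor supplying the scalings and $Aut(\mathcal{SU}_{n,L_0})$ supplying the $\sigma$ and the duality. I expect this reconstruction of $\Sigma$ and its symmetries from $\mathcal{A}$ to be the main obstacle.

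Finally I would identify $\ker \rho$, the automorphisms preserving $h$ fibrewise. Over the regular locus each fibre is a torsor under the Prym variety of its spectral curve, and for generic spectral data the cover $\tilde{\Sigma}_a \to \Sigma$ has no nontrivial automorphisms over $\Sigma$, so the associated principally polarised abelian variety has automorphism group $\{\pm 1\}$; since $-1$ alters the base (it belongs to the duality, with $a_i \mapsto (-1)^i a_i$) rather than acting fibrewise, every element of $\ker \rho$ acts by translation on the fibres. The infinitesimal translations are sections of the relative tangent bundle, which the Lagrangian fibration structure identifies with holomorphic $1$-forms on $\mathcal{A}$; exponentiating these over the compact fibres gives exactly $Vert_0(\mathcal{M}_{n,L_0})$ as the identity component of $\ker \rho$, while the finite component group is realised by the torsion line-bundle twists, which lie inside $Aut(\mathcal{SU}_{n,L_0})$. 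In particular $Vert_0(\mathcal{M}_{n,L_0})$ is characteristic in the normal subgroup $\ker \rho$ and hence normal in $Aut(\mathcal{M}_{n,L_0})$, which one may also verify directly from the conjugation formula $\phi \, e^{X_\mu} \phi^{-1} = e^{X_{\mu'}}$ with $\mu'$ the transform of $\mu$ under $\overline{\phi}$ and the scaling. Assembling the pieces, $\mathbb{C}^* \times Aut(\mathcal{SU}_{n,L_0})$ meets $Vert_0(\mathcal{M}_{n,L_0})$ trivially (the latter being a torsion-free connected vector space), surjects onto $\mathrm{im}\,\rho$, and together with $Vert_0(\mathcal{M}_{n,L_0})$ exhausts $\ker \rho$; this gives the asserted semidirect product decomposition.
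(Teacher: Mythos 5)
Your identification of $\ker\rho$ fails whenever $n \mid 2d$. The claim that every element of $\ker\rho$ acts by translations on the fibres is false: consider the automorphism $\delta(E,\Phi) = (E^*\otimes L,\, \Phi^t\otimes Id)$ with $L^n \cong L_0^2$, which is the cotangent lift of the Kouvidakis--Pantev type (2) automorphism composed with the scaling $m_{-1}$. Since $Tr\bigl((\Phi^t)^j\bigr) = Tr(\Phi^j)$, $\delta$ fixes every point of $\mathcal{A}$, so $\delta \in \ker\rho$; but in spectral data it sends a line bundle $M$ on $S_b$ to $M^{-1}$ tensored with a fixed line bundle, i.e.\ it acts on each regular fibre as inversion composed with a translation, not as a translation. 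Your dismissal of $-1$ (``it alters the base'') conflates fibrewise inversion with the duality automorphism: the duality does act as $a_i \mapsto (-1)^i a_i$ on $\mathcal{A}$, but its composite with $m_{-1}$ is precisely a fibre-preserving inversion. So $\ker\rho$ is strictly larger than $Vert_0(\mathcal{M}_{n,L_0})$ together with the torsion twists, and your assembly step (``together with $Vert_0$ exhausts $\ker\rho$'') breaks; the theorem survives only because these inversions lie in $\bigl(\mathbb{C}^*\times Aut(\mathcal{SU}_{n,L_0})\bigr)\cap\ker\rho$, elements which your argument never produces.

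The deeper gap is in the image analysis. You assert that $\overline{\phi}$ is graded-linear ``by a homogeneity argument built from the contracting $\mathbb{C}^*$-action'', but there is no a priori compatibility between $\overline{\phi}$ and the weighted scaling: $\phi$ itself need not commute with the $\mathbb{C}^*$-action on $\mathcal{M}_{n,L_0}$ --- the elements of $Vert_0$ do not --- so conjugation by $\overline{\phi}$ carries the scaling action to some other, possibly non-linear, $\mathbb{C}^*$-action on $\mathcal{A}$, and no grading on $\overline{\phi}$ follows. Supplying exactly this equivariance is the main content of the paper's proof: one first classifies all holomorphic vector fields on $\mathcal{M}^{\rm sm}_{n,d}$ as $h^*(f)\xi + X_\mu$ (Theorem \ref{theoremvecreg}, resting on the Kodaira--Spencer analysis of the family of spectral curves), and then solves $\mathcal{L}_{\xi^{\mathcal{A}}}(\nu)-\nu = -\mu/f$ (Lemma \ref{lemsolve}) to correct $\phi$ by a vertical flow $e^{X_\nu}$ so that it commutes with the $\mathbb{C}^*$-action (Proposition \ref{propfactor}). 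Once this is done the paper never needs your Torelli-type rigidity for $(\mathcal{A},\mathcal{D})$ --- which you leave unproved and which is a substantial theorem in its own right --- because a $\mathbb{C}^*$-equivariant automorphism preserves the zero section $\mathcal{SU}_{n,d}\subset\mathcal{M}_{n,d}$ (take the limit $\lambda\to 0$), restricts there to an automorphism covered by Theorem \ref{theoremstableauto}, and the residual fibrewise-linear automorphism of $T^*\mathcal{SU}^{\rm s}_{n,d}$ is scalar by Proposition \ref{propstableendo}. Without the vector-field classification, both your linearity claim and the conjugation formula $\phi\, e^{X_\mu}\,\phi^{-1} = e^{X_{\mu'}}$ (which you also invoke for normality, and which requires knowing that vertical holomorphic vector fields are exactly the $X_\mu$) are unsupported.
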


Using an infinite dimensional hyper-K\"ahler quotient construction \cite{hit1}, Hitchin showed that on the smooth points $\mathcal{M}_{n,L_0}^{\rm sm}$ of $\mathcal{M}_{n,L_0}$, there is a natural hyper-K\"ahler structure. To be specific, this consists of integrable complex structures $I,J,K$ satisfying the quaternionic relation $IJ = K$ and a Riemannian metric $g$ which is K\"ahler with respect to $I,J$ and $K$. In terms of the associated K\"ahler forms $\omega_I,\omega_J,\omega_K$, the holomorphic symplectic form $\Omega_I$ is given by $\Omega_I = \omega_J + i\omega_K$. In addition to the automorphisms of $\mathcal{M}_{n,L_0}$ as a complex analytic variety, we also consider various subgroups of $Aut(\mathcal{M}_{n,L_0})$ preserving different parts of the hyper-K\"ahler structure:

\begin{definition}\label{defautgroups0}
We define the following subgroups of $Aut(\mathcal{M}_{n,L_0})$:
\begin{itemize}
\item{$Aut_{Sympl}(\mathcal{M}_{n,L_0}) = \{ \phi \in Aut(\mathcal{M}_{n,L_0}) \, | \, \phi|_{\mathcal{M}_{n,L_0}^{\rm sm}} \text{ preserves } \Omega_I \}$, the group of holomorphic symplectomorphisms of $\mathcal{M}_{n,L_0}$.}
\item{$Aut_{Isom}(\mathcal{M}_{n,L_0}) = \{ \phi \in Aut(\mathcal{M}_{n,L_0}) \, | \, \phi|_{\mathcal{M}_{n,L_0}^{\rm sm}} \text{ preserves } g \}$, the group of holomorphic isometries of $\mathcal{M}_{n,L_0}$.}
\item{$Aut_{Q}(\mathcal{M}_{n,L_0}) = \{ \phi \in Aut(\mathcal{M}_{n,L_0}) \, | \, \phi|_{\mathcal{M}_{n,L_0}^{\rm sm}} \text{ preserves } J \}$, the group of quaternionic isomorphisms of $\mathcal{M}_{n,L_0}$.}
\item{$Aut_{HK}(\mathcal{M}_{n,L_0}) = \{ \phi \in Aut(\mathcal{M}_{n,L_0}) \, | \, \phi|_{\mathcal{M}_{n,L_0}^{\rm sm}} \text{ preserves } g,J \}$, the group of hyper-K\"ahler isomorphisms of $\mathcal{M}_{n,L_0}$.}
\end{itemize}
\end{definition}

Our second main theorem is a complete description of these subgroups:

\begin{theorem}\label{theoremgroups}
Under the isomorphism $Aut(\mathcal{M}_{n,L_0}) \cong \left( \mathbb{C}^* \times Aut(\mathcal{SU}_{n,L_0}) \right) \ltimes Vert_0(\mathcal{M}_{n,L_0})$, the subgroups given in Definition \ref{defautgroups0} are as follows:
\begin{enumerate}
\item{$Aut_{Sympl}(\mathcal{M}_{n,L_0}) = \left( \{1\} \times Aut( \mathcal{SU}_{n,L_0} ) \right) \ltimes Ham(\mathcal{M}_{n,d})$,}
\item{$Aut_{Isom}(\mathcal{M}_{n,L_0}) = \left( U(1) \times Aut( \mathcal{SU}_{n,L_0} ) \right)$,}
\item{$Aut_{Q}(\mathcal{M}_{n,L_0}) = \left( \mathbb{R}_{+} \times Aut( \mathcal{SU}_{n,L_0} ) \right)$,}
\item{$Aut_{HK}(\mathcal{M}_{n,L_0}) = \left( \{ 1 \} \times Aut( \mathcal{SU}_{n,L_0} ) \right)$,}
\end{enumerate}
where $U(1) \subset \mathbb{C}^*$ is the subgroup of unit complex numbers and $\mathbb{R}_+ \subset \mathbb{C}^*$ is the subgroup of positive real numbers.
\end{theorem}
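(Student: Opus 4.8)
The plan is to determine, for a general automorphism written in the form $\phi = (\lambda,\psi)\cdot e^{X_\mu}$ with $\lambda\in\mathbb{C}^*$, $\psi\in Aut(\mathcal{SU}_{n,L_0})$ and $e^{X_\mu}\in Vert_0(\mathcal{M}_{n,L_0})$, precisely when $\phi$ preserves each of the tensors $\Omega_I$, $g$ and $J$. Since each condition is the vanishing of a tensor built from $\phi$, I would first compute the effect of each of the three generating subgroups separately, then assemble the answer using the semidirect product structure of Theorem~\ref{theoremmain0}, and finally read off the four subgroups of Definition~\ref{defautgroups0} by intersecting the resulting conditions.

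First the $\mathbb{C}^*$-action. Because $\Phi$ plays the role of a cotangent coordinate one has $\rho_\lambda^*\Omega_I = \lambda\Omega_I$, so $\rho_\lambda$ preserves $\Omega_I$ exactly when $\lambda=1$. Taking real and imaginary parts of $\Omega_I = \omega_J + i\omega_K$ and writing $\lambda = re^{i\theta}$ gives $\rho_\lambda^*\omega_J = r(\cos\theta\,\omega_J - \sin\theta\,\omega_K)$ and $\rho_\lambda^*\omega_K = r(\sin\theta\,\omega_J + \cos\theta\,\omega_K)$, so on the pair $(\omega_J,\omega_K)$ the action scales by $r$ and rotates by $\theta$. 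For the metric I would use that Hitchin's equations are invariant under $\Phi\mapsto e^{i\theta}\Phi$, so the harmonic metric is unchanged along $U(1)$ and $U(1)$ acts by isometries; conversely, if $\rho_r$ were an isometry then $|\rho_r^*\omega_J|_g = |\omega_J|_g$ pointwise, which together with $\rho_r^*\omega_J = r\omega_J$ and the constancy of $|\omega_J|_g$ on a hyper-K\"ahler manifold forces $r=1$. Hence the metric-preserving elements of $\mathbb{C}^*$ are exactly $U(1)$. For $J$, if $\rho_{e^{i\theta}}$ preserved both $g$ and $J$ it would fix $\omega_J=g(J\cdot,\cdot)$, contradicting the rotation formula when $\theta\ne0$, so $U(1)$ does not preserve $J$; for $\theta=0$ (that is $\lambda=r\in\mathbb{R}_+$) no rotation occurs, and a direct comparison of $J$ at $[(\bar\partial_E,\Phi)]$ and $[(\bar\partial_E,r\Phi)]$, taking account of the variation of the harmonic metric, shows that $\mathbb{R}_+$ preserves $J$ while scaling $g$. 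Finally, the generators of $Aut(\mathcal{SU}_{n,L_0})$ come from automorphisms of $\Sigma$ together with tensoring and dualising operations; each is induced by an operation on solutions of Hitchin's equations that preserves the $L^2$-metric and the quaternionic structure, so $Aut(\mathcal{SU}_{n,L_0})\subset Aut_{HK}(\mathcal{M}_{n,L_0})$ and in particular preserves all of $\Omega_I$, $g$ and $J$.

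The remaining and most delicate ingredient is the behaviour of $Vert_0(\mathcal{M}_{n,L_0})$. For $\Omega_I$ this is immediate from Cartan's formula, $\mathcal{L}_{X_\mu}\Omega_I = d\,i_{X_\mu}\Omega_I = d\,h^*\mu = h^*(d\mu)$, which vanishes precisely when $\mu$ is closed; since $\mathcal{A}\cong\mathbb{C}^m$, closed equals exact, and in that case $e^{X_\mu}$ is the Hamiltonian flow $e^{X_f}\in Ham(\mathcal{M}_{n,d})$. The hard part is to show that a nontrivial $e^{X_\mu}$ preserves neither $g$ nor $J$. Here I would use that $X_\mu$ is vertical for the Hitchin map and acts on each smooth fibre, an abelian variety, by translation; since the special K\"ahler geometry of the base varies, the fibres are not metrically homogeneous, so these translations preserve neither the hyper-K\"ahler metric nor $J$ unless $\mu=0$. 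This is the step I expect to be the main obstacle, and I would approach it by analysing the induced action on the base together with the Myers--Steenrod fact that $Aut_{Isom}(\mathcal{M}_{n,L_0})$ is a finite-dimensional Lie group, which already constrains its intersection with the infinite-dimensional abelian group $Vert_0(\mathcal{M}_{n,L_0})$.

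Assembling these computations proves the theorem. Preserving $\Omega_I$ forces $\lambda=1$, leaves $\psi$ arbitrary and requires $\mu$ exact, giving $Aut_{Sympl} = (\{1\}\times Aut(\mathcal{SU}_{n,L_0}))\ltimes Ham(\mathcal{M}_{n,d})$. Preserving $g$ forces $\lambda\in U(1)$ and $\mu=0$ with $\psi$ arbitrary, giving $Aut_{Isom} = U(1)\times Aut(\mathcal{SU}_{n,L_0})$. Preserving $J$ forces $\lambda\in\mathbb{R}_+$ and $\mu=0$, giving $Aut_Q = \mathbb{R}_+\times Aut(\mathcal{SU}_{n,L_0})$. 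Finally $Aut_{HK} = Aut_{Isom}\cap Aut_Q$ has $\mathbb{C}^*$-part $U(1)\cap\mathbb{R}_+ = \{1\}$, yielding $Aut_{HK} = \{1\}\times Aut(\mathcal{SU}_{n,L_0})$.
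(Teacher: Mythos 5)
Your skeleton agrees with the paper's own proof --- reduce via Theorem \ref{theoremautgroup} to the three factors, observe that $Aut(\mathcal{SU}_{n,L_0})$ preserves the whole hyper-K\"ahler structure, and handle $\Omega_I$ by Cartan's formula together with the fact that closed equals exact on the affine space $\mathcal{A}$ --- and those parts of your proposal are fine. The genuine gap is exactly the step you yourself flag as the main obstacle: that no nontrivial $e^{X_\mu} \in Vert_0(\mathcal{M}_{n,L_0})$ preserves $g$ or $J$ (the paper's Lemma \ref{lemkilling}), and neither of your proposed routes closes it. The heuristic that the fibres are ``not metrically homogeneous since the special K\"ahler geometry of the base varies'' cannot work as stated: the semi-flat metric attached to the special K\"ahler structure on $\mathcal{A}^{\rm reg}$ is, by construction, invariant under all fibrewise translations no matter how the base geometry varies, so no argument built purely from the integrable-system data can exclude these isometries; a genuinely global input is needed. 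Myers--Steenrod is likewise insufficient: it makes $Aut_{Isom}(\mathcal{M}_{n,L_0})$ a finite-dimensional Lie group, but a nonzero finite-dimensional subspace of $H^0(\mathcal{A},\Omega^1(\mathcal{A}))$ acting by isometries is not excluded by finite-dimensionality alone. The paper's actual argument is different: if $X_\mu$ preserves $g$ then it preserves $\omega_I$, so $i_{X_\mu}\omega_I$ is closed, hence exact because $\mathcal{M}_{n,d}^{\rm sm}$ is simply connected (Lemma \ref{lemsimplyconn}, which is where the hypothesis $g \ge 3$ enters); the flow of $X_\mu$ is therefore Hamiltonian for $\omega_I$, but on a smooth fibre it restricts to a translation action of a compact K\"ahler torus on itself, which is never Hamiltonian. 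Moreover, the statement for $J$ is not proved in parallel, as you propose, but is reduced to the statement for $g$, using the orthogonal splitting $T\mathcal{M}_{n,d}^{\rm reg} = {\rm Ker}(h_*) \oplus J\,{\rm Ker}(h_*)$, the identity $g(X,Y) = -\mathrm{Re}\,\Omega_I(JX,Y)$, and $\mathcal{L}_{X_\mu}\Omega_I = h^*(d\mu)$.

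There is a second, structural gap: ``intersecting the conditions'' obtained for the factors separately is not a valid assembly step. From the hypothesis that $\phi = e^{X_\mu}\circ m_\lambda$ preserves $g$ (or $J$) one cannot formally conclude that each factor does --- a priori a nonunitary $\lambda$ and a nonzero $\mu$ could compensate one another. The paper closes this with the commutator calculus of Section \ref{secfphvf}: since $(e^{X_\mu})_*\xi = \xi + X_\tau$ with $\tau = \mathcal{L}_{\xi^\mathcal{A}}(\mu) - \mu$ (Corollary \ref{corapush}), a metric-preserving $\phi$ pushes the Killing field $i\xi$ forward to $i(\xi + X_\tau)$, which must again be Killing; Corollary \ref{corkilling} (which itself rests on Lemma \ref{lemkilling}) then forces $\tau = 0$, hence $\mu = 0$ because the only $\mathbb{C}^*$-invariant holomorphic $1$-form on $\mathcal{A}$ is zero, and only at that point does one conclude $\lambda \in U(1)$; the quaternionic case runs identically with $\xi$ in place of $i\xi$. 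Your outline contains no substitute for this mechanism (a group-level version, conjugating the known subgroup $U(1) \subset Aut_{Isom}$ by $\phi$ to land in $Aut_{Isom}\cap Vert_0$, could also work, but again only after the key lemma above is established). So the proposal correctly reproduces the easy parts of the paper's proof, but the two lemmas that carry the actual weight are missing.
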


In this theorem, $Ham(\mathcal{M}_{n,L_0})$ is the subgroup of $Vert_0(\mathcal{M}_{n,L_0})$ consisting of the Hamiltonian flows associated to holomorphic functions on $\mathcal{A}$.\\

We consider two further kinds of symmetries associated to $\mathcal{M}_{n,L_0}$. For this we will assume that $d = deg(L_0)$ is coprime to $n$, so that $\mathcal{M}_{n,L_0}$ is a smooth hyper-K\"ahler manifold. By an {\em anti-automorphism} of a complex manifold $(X,I)$, we mean a diffeomorphism $\phi : X \to X$ such that $\phi_* \circ I = -I \circ \phi_*$.

\begin{theorem}\label{theoremanti0}
Suppose that $n$ and $d$ are coprime. Then $\mathcal{M}_{n,d}$ admits an anti-automorphism if and only if $\Sigma$ admits an anti-automorphism.
\end{theorem}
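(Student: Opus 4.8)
The plan is to prove Theorem \ref{theoremanti0} as a biconditional, where the ``if'' direction is a direct construction and the ``only if'' direction requires extracting the anti-automorphism of $\Sigma$ from an assumed anti-automorphism of $\mathcal{M}_{n,d}$. For the forward (easy) direction, suppose $\tau : \Sigma \to \Sigma$ is an anti-holomorphic involution (or more generally an anti-automorphism). I would construct an induced map on $\mathcal{M}_{n,d}$ by pulling back Higgs bundles: send $(E,\Phi)$ to $(\tau^* \overline{E}, \tau^* \overline{\Phi})$, or equivalently use the conjugate holomorphic structure so that the resulting object is again a Higgs bundle with respect to the complex structure $I$ on $\Sigma$. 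Since $\tau$ is anti-holomorphic, the pullback $\tau^*$ of a $(1,0)$-form becomes a $(0,1)$-form, so one must compose with complex conjugation on the bundle to restore holomorphicity; the net effect on the moduli space is a map $\phi$ satisfying $\phi_* \circ I = -I \circ \phi_*$. One must check that this lands in the correct component (i.e. preserves the determinant condition up to the allowed twist, adjusting by a line bundle as in Theorem \ref{theoremstableauto}) and that it is compatible with (semi-)stability, which is automatic since $\tau$ is an isomorphism of real-analytic manifolds.

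For the converse (hard) direction, the strategy is to reduce an arbitrary anti-automorphism of $\mathcal{M}_{n,d}$ to a holomorphic automorphism, to which Theorem \ref{theoremmain0} applies. Suppose $\psi : \mathcal{M}_{n,d} \to \mathcal{M}_{n,d}$ satisfies $\psi_* \circ I = -I \circ \psi_*$. The key observation is that composing $\psi$ with the standard anti-automorphism built from a fixed reference anti-automorphism of $\Sigma$ — \emph{if one exists} — would yield a holomorphic automorphism; but since we cannot assume $\Sigma$ has such a map a priori, I would instead argue intrinsically. I would study the action of $\psi$ on the intrinsically-defined structures on $\mathcal{M}_{n,d}$: the holomorphic symplectic form $\Omega_I$ must be sent to an anti-holomorphic symplectic form, the Hitchin map $h$ and its base $\mathcal{A}$ must be respected up to conjugation, and the $\mathbb{C}^*$-action must be intertwined with its conjugate. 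The crucial step is that $\psi$ induces an anti-holomorphic automorphism of the base $\mathcal{A}$ of the Hitchin system and, more importantly, of the discriminant/branch locus, from which one recovers an anti-automorphism of $\Sigma$ via the spectral correspondence.

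The heart of the argument is to pin down how $\psi$ interacts with the canonical fibration and with $\mathcal{SU}_{n,d}$. The structural decomposition in Theorem \ref{theoremmain0} shows that $Aut(\mathcal{M}_{n,d})$ is built from $\mathbb{C}^*$, $Aut(\mathcal{SU}_{n,d})$, and the vertical flows $Vert_0$. I expect the anti-holomorphic analogue to mirror this: an anti-automorphism, once composed appropriately, lies in this group, and the $Aut(\mathcal{SU}_{n,d})$ factor (by Theorem \ref{theoremstableauto}) encodes an automorphism $\sigma$ of $\Sigma$. The point is that the anti-holomorphicity forces the $\Sigma$-level data to be anti-holomorphic: the induced map on the ``abelianization'' or on the zero section $\mathcal{SU}_{n,d}^{\rm sm}$, together with the Torelli-type reconstruction underlying the proof of Theorem \ref{theoremstableauto}, produces an anti-automorphism $\tau : \Sigma \to \Sigma$. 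Concretely, I would show that $\psi$ preserves the nilpotent cone and the cotangent subvariety $T^*\mathcal{SU}_{n,d}^{\rm sm}$ up to the $\mathbb{C}^*$-action, restricts to an anti-automorphism of $\mathcal{SU}_{n,d}$, and then invoke the analogue of Kouvidakis--Pantev/Hwang--Ramanan in the anti-holomorphic setting to extract $\tau$.

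The main obstacle will be the converse direction, specifically showing that an abstract anti-automorphism of $\mathcal{M}_{n,d}$ necessarily descends to structures on $\Sigma$ without circularly assuming that $\Sigma$ already carries an anti-automorphism. The delicate issue is controlling the interaction of $\psi$ with the hyper-K\"ahler and integrable-system structures simultaneously: one must verify that $\psi$ sends the Hitchin fibration to its complex conjugate in a way that is rigid enough to force an anti-holomorphic symmetry of the base and hence of the spectral data, and then that the coprimality of $n$ and $d$ (ensuring smoothness and the absence of strictly semi-stable points) is genuinely used to make the reconstruction of $\tau$ go through. I anticipate that the cleanest route is to fix \emph{any} automorphism-type element to trivialize the $\mathbb{C}^*$ and $Vert_0$ factors, reducing to the rigid finite group $Aut(\mathcal{SU}_{n,d})$ where the Torelli theorem for the underlying curve supplies the desired $\tau$.
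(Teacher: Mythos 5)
Your forward direction (from an anti-automorphism of $\Sigma$ to one of $\mathcal{M}_{n,d}$) is essentially the paper's construction: conjugate pullback of the Higgs data, with a line-bundle twist to restore the fixed determinant (the paper phrases it as $\hat{f}_L = \delta_L \circ f^*$, where $\delta_L$ is the dualization $(E,\Phi)\mapsto(E^*\otimes L,\Phi^t\otimes Id)$; via the Hermitian metric this is your conjugate-pullback map), and it is fine. The converse, however, has a genuine gap. Your plan is to reduce the anti-automorphism $\psi$ via Theorem \ref{theoremmain0} to the factor $Aut(\mathcal{SU}_{n,d})$ and then ``invoke the analogue of Kouvidakis--Pantev/Hwang--Ramanan in the anti-holomorphic setting.'' Two problems. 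First, Theorem \ref{theoremmain0} classifies \emph{holomorphic} automorphisms; composing $\psi$ with holomorphic automorphisms (elements of $\mathbb{C}^*$, $Vert_0(\mathcal{M}_{n,d})$, or $Aut(\mathcal{SU}_{n,d})$) always yields another \emph{anti}-holomorphic map, so no such composition ever lies in the group that Theorem \ref{theoremmain0} describes, and ``trivializing the $\mathbb{C}^*$ and $Vert_0$ factors'' of $\psi$ has no meaning --- what you would need is a classification of anti-automorphisms, which is precisely what is being sought. Second, the ``anti-holomorphic analogue'' of Kouvidakis--Pantev/Hwang--Ramanan is not something one can invoke: it does not exist in the cited literature, and proving it would amount to redoing their discriminant/Hecke-curve arguments for maps between the data attached to two \emph{different} complex structures on $\Sigma$. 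So the heart of your converse is a placeholder, not an argument.

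The paper closes the converse with a trick your proposal explicitly stops short of. You correctly observe that one cannot compose $\psi$ with a reference anti-automorphism of $\Sigma$ without circularity; but one can compose with the conjugate-pullback map $g^*$ of an arbitrary orientation-reversing \emph{diffeomorphism} $g:\Sigma\to\Sigma$, which always exists with no hypothesis on $\Sigma$. The point is that $g^*$ is an anti-holomorphic isomorphism from $\mathcal{M}_{n,L_0}(j)$ onto the Higgs bundle moduli space of a possibly \emph{different} Riemann surface $(\Sigma,-g^*(j))$, so the composition $\delta_L\circ g^*\circ\psi:\mathcal{M}_{n,L_0}(j)\to\mathcal{M}_{n,L_0}(-g^*(j))$ is a \emph{holomorphic} isomorphism between the Higgs moduli spaces of two curves. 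The Torelli theorem for Higgs bundle moduli spaces of Biswas--G\'omez \cite{bigo} then yields an isomorphism $f$ with $f^*(-g^*(j))=j$, and $g\circ f$ is the desired anti-automorphism of $(\Sigma,j)$. This ingredient --- the Torelli theorem applied to the moduli spaces of two different curves, made available by the auxiliary diffeomorphism $g$ --- is the key idea of the paper's proof and is absent from your proposal; without it, or an equivalently strong substitute that you would have to prove from scratch, your converse direction does not go through.
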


Using this, we are able to determine the full isometry group of $\mathcal{M}_{n,L_0}$:

\begin{theorem}\label{theoremisometry0}
Suppose $n$ and $d = deg(L_0)$ are coprime.
\begin{itemize}
\item{If $\Sigma$ does not admit an anti-automorphism, then every isometry of $\mathcal{M}_{n,L_0}$ preserves $I$. Therefore $Isom(\mathcal{M}_{n,L_0}) = Aut_{Isom}(\mathcal{M}_{n,L_0}) \cong \left( U(1) \times Aut( \mathcal{SU}_{n,L_0} ) \right)$.}
\item{If $\Sigma$ admits an anti-automorphism then the subgroup of isometries of $\mathcal{M}_{n,d}$ preserving $I$ has index $2$ in the isometry group of $\mathcal{M}_{n,L_0}$.}
\end{itemize}
\end{theorem}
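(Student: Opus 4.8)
The plan is to show that every isometry of $\mathcal{M}_{n,L_0}$ must send the complex structure $I$ to $\pm I$, and then to use Theorem~\ref{theoremanti0} to decide when the sign $-1$ is attained. Since $n,d$ are coprime, $\mathcal{M}_{n,L_0}$ is a smooth, irreducible hyper-K\"ahler manifold, so its parallel orthogonal complex structures are precisely the twistor sphere $\{aI+bJ+cK : a^2+b^2+c^2=1\}$. If $\phi$ is an isometry it preserves the Levi-Civita connection, hence carries parallel tensors to parallel tensors; in particular the pulled-back complex structure $\phi^*I := \phi_*^{-1}\circ I\circ\phi_*$ again lies on the twistor sphere, and $\phi$ defines a biholomorphism $\phi:(\mathcal{M}_{n,L_0},\phi^*I)\to(\mathcal{M}_{n,L_0},I)$. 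Thus the assignment $\phi\mapsto\phi^*I$ records the action of $Isom(\mathcal{M}_{n,L_0})$ on the twistor sphere by orthogonal transformations.

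First I would isolate $\pm I$ among all twistor complex structures by a biholomorphic invariant. In the complex structure $I$ the Hitchin map $h:\mathcal{M}_{n,L_0}\to\mathcal{A}$ is proper and holomorphic with generic fibre a positive-dimensional abelian variety, so $(\mathcal{M}_{n,L_0},I)$ contains compact complex submanifolds of positive dimension; the same holds verbatim for $-I$. By contrast, for any twistor complex structure $aI+bJ+cK$ with $b^2+c^2>0$, non-abelian Hodge theory identifies $(\mathcal{M}_{n,L_0},aI+bJ+cK)$ with the de Rham (equivalently Betti) moduli space, which is a smooth \emph{affine} variety and hence Stein; such a space carries no compact analytic subvarieties of positive dimension. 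Since a biholomorphism must preserve the existence of positive-dimensional compact complex submanifolds, the biholomorphism $\phi:(\mathcal{M}_{n,L_0},\phi^*I)\to(\mathcal{M}_{n,L_0},I)$ forces $\phi^*I$ to be of Higgs type, i.e. $\phi^*I=\pm I$. \textbf{This step --- distinguishing the Higgs complex structure from the generic (affine) twistor complex structures --- is the main obstacle, and the key geometric input of the proof.}

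It follows that $\phi\mapsto\varepsilon(\phi)$, where $\phi^*I=\varepsilon(\phi)I$, is a homomorphism $Isom(\mathcal{M}_{n,L_0})\to\{\pm1\}$, since $(\phi_1\phi_2)^*I=\phi_2^*\phi_1^*I$. Its kernel consists of the isometries preserving $I$, i.e. the holomorphic isometries, which is exactly $Aut_{Isom}(\mathcal{M}_{n,L_0})\cong U(1)\times Aut(\mathcal{SU}_{n,L_0})$ by Theorem~\ref{theoremgroups}. An isometry with $\varepsilon(\phi)=-1$ satisfies $\phi_*\circ I=-I\circ\phi_*$, so it is precisely an anti-holomorphic isometry, and in particular an anti-automorphism in the sense of the paper. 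Hence if $\Sigma$ admits no anti-automorphism, Theorem~\ref{theoremanti0} shows $\mathcal{M}_{n,L_0}$ admits none either, so $\varepsilon\equiv1$ and $Isom(\mathcal{M}_{n,L_0})=Aut_{Isom}(\mathcal{M}_{n,L_0})$, giving the first bullet.

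For the second bullet it remains to produce a single anti-holomorphic isometry when $\Sigma$ carries an anti-automorphism $\sigma$. The natural candidate is the map induced on Higgs bundles by $(E,\Phi)\mapsto(\overline{\sigma^*E},\,-\overline{\sigma^*\Phi})$, suitably normalised to respect the fixed determinant and trace-free conditions, as in the construction underlying Theorem~\ref{theoremanti0}; this is anti-holomorphic for $I$, so $\varepsilon=-1$. To see it is an isometry, I would work with Hitchin's description of $g$ as the $L^2$-metric on solutions of the self-duality equations: since $\sigma$ is anti-conformal, conjugate pullback by $\sigma$ together with fibrewise conjugation preserves the relevant $L^2$-inner products and hence descends to an isometry of $(\mathcal{M}_{n,L_0},g)$. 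With such a $\phi$ in hand, $\varepsilon$ is surjective, so its kernel $Aut_{Isom}(\mathcal{M}_{n,L_0})$ has index $2$ in $Isom(\mathcal{M}_{n,L_0})$, which is the second bullet.
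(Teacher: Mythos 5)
Your overall strategy coincides with the paper's own proof of Theorem \ref{theoremisometry0}: an isometry $\phi$ preserves the Levi-Civita connection, so $\phi^*I$ is a parallel orthogonal complex structure; parallel complex structures lie on the twistor sphere $\{aI+bJ+cK\}$; within that sphere $\pm I$ are distinguished biholomorphically from the rest, forcing $\phi^*I=\pm I$; this yields a homomorphism $\varepsilon\colon Isom(\mathcal{M}_{n,L_0})\to\{\pm 1\}$ whose kernel is $Aut_{Isom}(\mathcal{M}_{n,L_0})\cong U(1)\times Aut(\mathcal{SU}_{n,L_0})$ by Theorem \ref{theoremgroups}, and surjectivity of $\varepsilon$ is controlled by Theorem \ref{theoremanti0} together with an explicit isometric anti-automorphism when $\Sigma$ admits an anti-automorphism. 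Your Stein/compact-subvariety comparison is exactly the content behind the paper's citation of \cite{hit1} at this step (the non-$\pm I$ twistor structures are the de Rham--Betti moduli, affine hence Stein, while $(\mathcal{M}_{n,L_0},I)$ contains $\mathcal{SU}_{n,L_0}$), and your candidate anti-holomorphic isometry is the paper's $\hat{f}_L=\delta_L\circ f^*$ from the proof of Theorem \ref{theoremantiauto}, whose isometry property is checked the same way, via the $L^2$-description of $g$.

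There is, however, one genuine gap: the opening claim that, since $\mathcal{M}_{n,L_0}$ is a ``smooth, irreducible hyper-K\"ahler manifold,'' its parallel orthogonal complex structures are \emph{precisely} the twistor sphere. The irreducibility needed here is Riemannian (holonomy) irreducibility --- equivalently, that the covariantly constant endomorphisms of $T\mathcal{M}_{n,L_0}$ are spanned by $Id,I,J,K$ --- and this does not follow from smoothness or from irreducibility as an algebraic variety: a priori the metric could be locally a Riemannian product, which would produce extra parallel complex structures to which an isometry could send $I$, and your argument would collapse at its first step. This is exactly what Proposition \ref{propconstantendo} of the paper establishes, and its proof is not soft: it uses that $\mathcal{SU}_{n,L_0}$ is a totally geodesic, half-dimensional submanifold (a component of the fixed locus of $(E,\Phi)\mapsto(E,-\Phi)$), the rigidity of holomorphic endomorphisms of $T\mathcal{SU}_{n,L_0}$ (Proposition \ref{propstableendo} and Lemma \ref{lemconstant1}), and ampleness of the anti-canonical bundle of $\mathcal{SU}_{n,L_0}$ to exclude a local product splitting. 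So your proof becomes complete once this step is either cited (Proposition \ref{propconstantendo}) or proven along such lines. Note also that you flag the Stein comparison as ``the main obstacle,'' whereas that part is classical (Hitchin); the holonomy statement you assumed for free is where the paper does its real work for this theorem.
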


In the case that $\Sigma$ admits an anti-automorphism we can say more precisely what the isometry group $Isom(\mathcal{M}_{n,L_0})$ is. We leave the details to Section \ref{secisometry2}.\\

The plan of the paper is as follows. Section \ref{sechiggsbundles} is a review of Higgs bundle moduli spaces, the Hitchin system and spectral curves. In Section \ref{sechamflows} we study in depth the Hamiltonian flows of the Hitchin system on non-singular fibres in \textsection \ref{sechamflow1} and then on generic singular fibres in \textsection \ref{sechamflow2}. This will allow us to deduce that the Hitchin map is a submersion on a suitably large open subset. In Section \ref{secholovf}, we give a classification of the holomorphic vector fields. To do this, we consider in \textsection \ref{secksm} the Kodaira-Spencer mapping associated to the non-singular fibres of the Hitchin system. This allows us in \textsection \ref{secchvf} to determine the holomorphic vector fields on the complement of the singular fibres. Using the results of Section \ref{sechamflows} we can then deduce which holomorphic vectors fields extend to the entire smooth locus of the moduli space. Based on our study of flows of the Hitchin system, we are further able to determine which holomorphic vector fields integrate to automorphisms of the smooth locus and by inspection we see that these extend as automorphisms to the whole moduli space.\\

In Section \ref{secpmt}, we prove our main result, Theorem \ref{theoremmain0}. The key idea is that if $\phi$ is an automorphism of $\mathcal{M}_{n,L_0}$, then we can find a flow $e^{X_\nu}$ along the fibres of the Hitchin system such that the composition $e^{X_\nu} \circ \phi$ commutes with the $\mathbb{C}^*$-action. We are able to accomplish this by using our classification of the holomorphic vector fields on $\mathcal{M}^{\rm sm}_{n,L_0}$. This reduces the problem to the classification of automorphisms commuting with the $\mathbb{C}^*$-action. We are then able to reduce this to the problem of determining automorphisms of the cotangent bundle $T^* \mathcal{SU}_{n,d}^{\rm sm}$ which are linear in the fibres. Using once again our classification of holomorphic vector fields on the moduli space, we are able to determine the group of all such automorphisms and the main theorem follows. In Section \ref{secstructures} we give the proof of Theorem \ref{theoremgroups}. Our main strategy is to examine how automorphisms which preserve some of the geometry of $\mathcal{M}_{n,L_0}$ act on the rest of the geometry. Our classification of holomorphic vector fields on $\mathcal{M}_{n,L_0}^{\rm sm}$ turns out to be crucial to this approach. Finally in Section \ref{secisometry} we consider anti-automorphisms and isometries of the moduli space under the assumption of coprime rank and degree. In \textsection \ref{secanti} we prove Theorem \ref{theoremanti0} as a consequence of the Torelli theorem for Higgs bundle moduli spaces \cite{bigo}. In \textsection \ref{secisometry2} we prove Theorem \ref{theoremisometry0} and consequently a classification of the isometry groups of these moduli spaces.


\section{Higgs bundles and the Hitchin system}\label{sechiggsbundles}


\subsection{Higgs bundle moduli spaces}\label{sechbms}

Let $\Sigma$ be a compact Riemann surface of genus $g > 1$ and let $K$ denote the canonical bundle of $\Sigma$. A {\em Higgs bundle} of rank $n$, degree $d$ on $\Sigma$ is a pair $(E,\Phi$), where $E$ is a holomorphic vector bundle of rank $n$, degree $d$ and $\Phi$ is a holomorphic section of $End(E) \otimes K$, called the {\em Higgs field}. It is often convenient to think of $E$ as consisting of an underlying $\mathcal{C}^\infty$ vector bundle, which we also denote by $E$, together with a choice of holomorphic structure on $E$. The holomorphic structure on $E$ is specified by giving a $\overline{\partial}$-operator, $\overline{\partial}_E : \Omega^0(\Sigma,E) \to \Omega^{0,1}(\Sigma,E)$ and the requirement that $\Phi$ is holomorphic reads $\overline{\partial}_E \Phi = 0$. As such we will often denote Higgs bundles as pairs $(\overline{\partial}_E , \Phi)$.\\

The {\em slope} $\mu(E)$ of a holomorphic vector bundle $E$ on $\Sigma$ is defined as the quotient $\mu(E) = deg(E)/rank(E)$. Similarly the slope $\mu(E,\Phi)$ of a Higgs bundle $(E,\Phi)$ is simply the slope of $E$, $\mu(E,\Phi) = \mu(E)$. Recall that a Higgs bundle $(E,\Phi)$ is said to be {\em semi-stable} if for all proper $\Phi$-invariant subbundles $F \subset E$, we have $\mu(F) \le \mu(E)$. We say that $(E,\Phi)$ is {\em stable} if this inequality is strict for all such $F$. Any semi-stable Higgs bundle $(E,\Phi)$ has a Jordan-H\"older filtration, that is, a sequence $0 = E_1 \subset \dots \subset E_l = E$ of $\Phi$-invariant subbundles for which $\mu(E_i/E_{i-1}) = \mu(E)$ and the induced Higgs bundles $(E_i/E_{i-1} , \Phi_i)$ are stable. The associated graded Higgs bundle $gr(E,\Phi) = \bigoplus_i (E_i/E_{i-1} , \Phi_i)$ is determined up to isomorphism from $(E,\Phi)$ \cite{nit}. Two semi-stable Higgs bundles are said to be {\em S-equivalent} if their associated graded Higgs bundles are isomorphic. In particular, two stable Higgs bundles are S-equivalent if and only if they are isomorphic.\\

We say that a Higgs bundle $(E,\Phi)$ is trace-free if $\Phi$, viewed as a $K$-valued endomorphism of $E$ is trace-free. Suppose that $L_0$ is a holomorphic line bundle of degree $d$. We say that $(E,\Phi)$ has determinant $L_0$ if $det(E) \cong L_0$.

\begin{definition}
We let $\mathcal{M}_{n,L_0}$ denote the moduli space of $S$-equivalence classes of rank $n$, degree $d$ Higgs bundles, which are trace-free and have determinant $L_0$. The space $\mathcal{M}_{n,L_0}$ is a quasi-projective algebraic variety over $\mathbb{C}$ and has an open subvariety $\mathcal{M}_{n,L_0}^{s}$, the moduli space of stable rank $n$, degree $d$ Higgs bundle, trace-free with determinant $L_0$ \cite{nit}. The dimension of $\mathcal{M}_{n,d}$ is $2(n^2-1)(g-1)$.
\end{definition}

\begin{remark}
While $\mathcal{M}_{n,L_0}$ has an algebraic structure, we will study the automorphisms of $\mathcal{M}_{n,L_0}$ within the category of complex analytic varieties. This is an important point to bear this in mind, as $\mathcal{M}_{n,L_0}$ is non-compact.
\end{remark}

\begin{remark}
For any line bundle $L$, the tensor product $(E,\Phi) \mapsto (E \otimes L , \Phi \otimes Id )$ defines an isomorphism $\otimes L : \mathcal{M}_{n,L_0} \to \mathcal{M}_{n,L_0 \otimes L^n}$. It follows that if $L_0,L'_0$ have the same degree modulo $n$, there is an isomorphism $\mathcal{M}_{n,L_0} \cong \mathcal{M}_{n,L'_0}$. As an algebraic variety, $\mathcal{M}_{n,L_0}$ only depends on $L_0$ through the degree $d = deg(L_0)$ and as such we will often use $\mathcal{M}_{n,d}$ to denote $\mathcal{M}_{n,L_0}$, where $L_0$ is any holomorphic line bundle of degree $d$.
\end{remark}

Let $\mathcal{M}_{n,d}^{\rm sm} \subseteq \mathcal{M}_{n,d}$ denote the locus of smooth points of $\mathcal{M}_{n,d}$ and note that stable points are smooth, so $\mathcal{M}_{n,d}^s \subseteq \mathcal{M}_{n,d}^{\rm sm}$. We also note that when $n$ and $d$ are coprime, every semistable Higgs bundle is stable so that $\mathcal{M}_{n,d}^s = \mathcal{M}_{n,d}^{\rm sm} = \mathcal{M}_{n,d}$ and the moduli space is a complex manifold. For a smooth point $m \in \mathcal{M}_{n,d}^{\rm sm}$ represented by a Higgs bundle pair $(E,\Phi)$, the tangent space $T_m \mathcal{M}_{n,d}^{\rm sm}$ may be described as follows. Let $End_0(E) \subset End(E)$ denotes the subbundle of trace-free endomorphisms of $E$. An infinitesimal deformation of $(E,\Phi)$ is represented by a pair $(\dot{A} , \dot{\Phi} ) \in \Omega^{0,1}(\Sigma , End_0(E)) \oplus \Omega^{1,0}(\Sigma , End_0(E))$, where $\dot{A}$ is a deformation of the holomorphic structure $\overline{\partial}_E$ and $\dot{\Phi}$ is a deformation of $\Phi$. Differentiating the condition $\overline{\partial}_E \Phi = 0$, we see that such pairs $(\dot{A} , \dot{\Phi})$ must satisfy $\overline{\partial}_E \dot{\Phi} + [\dot{A} , \Phi] = 0$. Further, a pair $(\dot{A} , \dot{\Phi})$ gives a trivial deformation whenever $(\dot{A} , \dot{\Phi}) = (\overline{\partial}_E \psi , [\Phi , \psi])$, for some $\psi \in \Omega^0(\Sigma , End_0(E))$. It can then be shown that this gives a natural identification of $T_m \mathcal{M}_{n,d}^{\rm sm}$ with the space of pairs $(\dot{A} , \dot{\Phi})$ such that $\overline{\partial}_E \dot{\Phi} + [\dot{A} , \Phi] = 0$, modulo pairs of the form $(\dot{A} , \dot{\Phi}) = (\overline{\partial}_E \psi , [\Phi , \psi])$. This is precisely the degree $1$ hypercohomology of the complex
\begin{equation*}
\xymatrix{
End_0(E) \ar[r]^-{ [\Phi , \, . \, \, ] } & End_0(E) \otimes K.
}
\end{equation*}

The smooth locus $\mathcal{M}_{n,d}^{\rm sm}$ is a complex manifold. We let $I$ denote the complex structure on $\mathcal{M}_{n,d}^{\rm sm}$. In terms of deformations $(\dot{A}, \dot{\Phi})$, the complex structure is simply the natural complex structure $I(\dot{A} , \dot{\Phi}) = (i\dot{A} , i\dot{\Phi})$. Furthermore $\mathcal{M}_{n,d}^{\rm sm}$ carries a natural holomorphic symplectic form $\Omega_I$, which may be defined as follows:
\begin{equation}\label{equomegai1}
\Omega_I ( (\dot{A}_1 , \dot{\Phi}_1) , (\dot{A}_2 , \dot{\Phi}_2 ) ) = \int_\Sigma Tr( \dot{A}_1 \dot{\Phi}_2 - \dot{A}_2 \dot{\Phi}_1 ),
\end{equation}
where $Tr$ denotes the trace of an endomorphism. The $2$-form $\Omega_I$ is closed and defines a holomorphic symplectic structure on $\mathcal{M}_{n,d}^{\rm sm}$. One way to see this is to construct $\Omega_I$ by means of an infinite dimensional symplectic quotient \cite{hit1,hit2} or indeed a hyper-K\"ahler quotient. We will consider the hyper-K\"ahler structure of $\mathcal{M}_{n,d}^{\rm sm}$ in Section \ref{secstructures}. For now we only need to make use of the holomorphic symplectic structure $(I,\Omega_I)$.\\

A Higgs bundle $(E,0)$ with zero Higgs field is simply a holomorphic vector bundle $E$. For such Higgs bundles (semi)-stability reduces to the usual definition of (semi)-stability of holomorphic bundles. There is likewise a moduli space $\mathcal{SU}_{n,L_0}$ of S-equivalence classes of rank $n$, degree $d$ semi-stable bundles with determinant $L_0$. As with Higgs bundles, this space only depends on $L_0$ through the degree mod $n$, so we will also denote the moduli space by $\mathcal{SU}_{n,d}$. This space is a complex projective variety of dimension $(n^2-1)(g-1)$. We let $\mathcal{SU}_{n,d}^{\rm sm} \subseteq \mathcal{SU}_{n,d}$ denote the smooth locus. Then for a point $[E] \in \mathcal{SU}_{n,d}^{\rm sm}$ the tangent space is $T_{[E]}\mathcal{SU}_{n,d}^{\rm sm} \cong H^1( \Sigma , End_0(E) )$. By Serre duality, the cotangent space is $T^*_{[E]} \mathcal{SU}_{n,d}^{\rm sm} \cong H^0( \Sigma , End_0(E) \otimes K)$. Thus a cotangent vector $\Phi \in T^*_{[E]} \mathcal{SU}_{n,d}^{\rm sm}$ can be thought of as a Higgs field on $E$ and one sees that $(E,\Phi)$ defines a point in $\mathcal{M}_{n,d}^{{\rm sm}}$. In this way we obtain a natural open inclusion $T^* \mathcal{SU}_{n,d}^{\rm sm} \subset \mathcal{M}_{n,d}^{\rm sm}$. Note also that the symplectic form $\Omega_I$ on $\mathcal{M}_{n,d}^{\rm sm}$ restricts to the canonical symplectic form on $T^*\mathcal{SU}_{n,d}^{\rm sm}$.


\subsection{Spectral curves and the Hitchin system}\label{secschs}

Identify the Lie algebra $\mathfrak{sl}(n,\mathbb{C})$ with the space of trace-free endomorphisms of $\mathbb{C}^n$. Any $A \in \mathfrak{sl}(n,\mathbb{C})$ has a characteristic polynomial:
\begin{equation*}
det(\lambda - A ) = \lambda^n + a_2(A) \lambda^{n-2} + \dots + a_n(A).
\end{equation*}
The coefficients $a_2, \dots , a_n$ are generators for the ring of polynomials on $\mathfrak{sl}(n,\mathbb{C})$ which are invariant under the adjoint action of $SL(n,\mathbb{C})$. Note that $a_j$ is homogeneous of degree $j$ and we can alternatively define $a_j$ as:
\begin{equation}\label{equapoly}
a_j(A) = (-1)^j Tr_{ \wedge^j \mathbb{C}^n } ( A ),
\end{equation}
where for any $\mathfrak{sl}(n,\mathbb{C})$-representation $R$, we let $Tr_R(A)$ denote the trace of $A$ in the representation $R$. When $R = \mathbb{C}^n$ is the standard representation we write this simply as $Tr$. There are many other generating sets for the ring of invariant polynomials, we mention just two other sets $\{ b_j \}_{j=2}^n$, $\{ h_j \}_{j=2}^n$ which will be important to us:
\begin{eqnarray}
b_j(A) &=& \frac{1}{j} Tr( A^j ), \label{equbpoly} \\
h_j(A) &=& Tr_{S^j(\mathbb{C}^n)}(A). \label{equhpoly}
\end{eqnarray}
Note that $b_j$ and $h_j$ are also homogeneous of degree $j$. The invariants $a_j,b_j,h_j$ are defined for every integer $j \ge 1$ by Equations (\ref{equapoly})-(\ref{equhpoly}). Since $A$ is trace-free we have $a_1 = b_1 = h_1 = 0$. For convenience we also set $a_0 = h_0 = 1$ but leave $b_0$ undefined. The generating sets $\{a_j \}_{j=2}^n , \{ b_j \}_{j=2}^n , \{ h_j \}_{j=2}^n $ are related by the following versions of Newton's identities \cite{mac}, valid for any $k \ge 1$:
\begin{equation}\label{equnewton}
\begin{aligned}
\sum_{j=1}^k j b_j a_{k-j} = -ka_k, && \sum_{j=1}^k j b_j h_{k-j} = kh_k, && \sum_{j=0}^k h_j a_{k-j} = 0.
\end{aligned}
\end{equation}

Suppose that $(E,\Phi)$ is a rank $n$ trace-free Higgs bundle. If $f_j$ is any degree $j$ invariant polynomial on $\mathfrak{sl}(n,\mathbb{C})$, then we can apply $f_j$ to $\Phi$ to obtain a holomorphic section $f_j( \Phi )$ of $K^j$. Invariance of $f_j$ further ensures that $f_j(\Phi)$ depends only on the isomorphism class of $(E,\Phi)$. Let $f_2 , \dots , f_n$ be a set of generators for the ring of invariant polynomials, where $f_j$ is homogeneous of $j$. This defines a holomorphic map $h : \mathcal{M}_{n,d} \to \mathcal{A}$ into the affine space 
\begin{equation*}
\mathcal{A} = \bigoplus_{j=2}^n H^0(\Sigma , K^j),
\end{equation*}
by evaluating the polynomials $f_j$ on the Higgs field:
\begin{equation*}
h(E,\Phi) = ( f_2(\Phi) , f_3(\Phi) , \dots , f_n(\Phi) ).
\end{equation*}
The map $h$ is known as the {\em Hitchin map} and $h : \mathcal{M}_{n,d} \to \mathcal{A}$ is referred to as the {\em Hitchin fibration} or {\em Hitchin system}. Given two different choices of generators for the ring of invariant polynomials $\{ f_j \}, \{ f'_j \}$, the corresponding Hitchin maps $h,h'$ are related by an isomorphism of $\mathcal{A}$. In this sense the Hitchin map is essentially independent of the choice of generators. However, it will be convenient to take the Hitchin map with respect to $b_2, \dots , b_n$ in (\ref{equbpoly}). Therefore we define the Hitchin map $h : \mathcal{M}_{n,d} \to \mathcal{A}$ from now on to be given by
\begin{equation*}
h(E,\Phi) = \left( b_2(\Phi) , b_3(\Phi) , \dots , b_n(\Phi) \right) = \left( \tfrac{1}{2} Tr( \Phi^2) , \tfrac{1}{3} Tr( \Phi^3) , \dots , \tfrac{1}{n} Tr( \Phi^n ) \right).
\end{equation*}

When no confusion is likely to arise we will denote $a_j(\Phi) , b_j(\Phi) , h_j(\Phi) \in H^0(\Sigma , K^j)$ simply as $a_j,b_j, h_j$. Newton's identities (\ref{equnewton}) allow us to express the $a_j(\Phi)$ and $h_j(\Phi)$ in terms of the $b_j(\Phi)$. In particular, there is an automorphism $\alpha : \mathcal{A} \to \mathcal{A}$ of the affine variety $\mathcal{A}$ such that for any $(E,\Phi)$ we have $\alpha( b_2(\Phi) , \dots , b_n(\Phi) ) = ( a_2(\Phi) , \dots , a_n( \Phi ) )$. For a Higgs bundle $(E,\Phi) \in \mathcal{M}_{n,d}$ we will often use to $b = (b_2 , \dots , b_n) \in \mathcal{A}$ to denote $h(E,\Phi)$ and $a = (a_2 , \dots , a_n ) \in \mathcal{A}$ will denote $\alpha(b)$.\\

The Hitchin fibration is an {\em algebraically completely integrable system} \cite{hit2}. Recall that this means $h : \mathcal{M}_{n,d} \to \mathcal{A}$ is a holomorhpic Lagrangian fibration with respect to $\Omega_I$, that the generic fibre of $h$ is an open set in an abelian variety and that the Hamiltonian vector fields $X_{f_1} , X_{f_2} , \dots , X_{f_m}$ are linear on the fibres, where $f_1, \dots , f_m$ are coordinate functions on $\mathcal{A}$. As we will recall, in the case of the Hitchin fibration, the generic fibres are actually abelian varieties. This guarantees that the vector fields $X_{f_1} , \dots , X_{f_m}$ are linear on the fibres, since every global holomorphic vector field on a complex torus is linear.\\

The fibres of the Hitchin fibration can be described using the notion of spectral curves. For this, suppose we are given $b = (b_2 , \dots , b_n ) \in \mathcal{A}$ and set $a = \alpha(b) = (a_2 , \dots , a_n) \in \mathcal{A}$, so $a_j \in H^0(\Sigma , K^j )$. Let $\pi : K \to \Sigma$ denote the projection from the total space of $K$ to $\Sigma$ and let $\lambda$ denote the tautological section of $\pi^*(K)$. Consider the section $s_b \in H^0(K , \pi^*(K^n))$ given by
\begin{equation}\label{equsectiona}
s_b = \lambda^n + \pi^*(a_2) \lambda^{n-2} + \dots + \pi^*(a_n).
\end{equation}
The zero locus $S_b \subset K$ of $s_b$ is called the {\em spectral curve} associated to $b$. In general $S_b$ can be singular, however, Bertini's theorem implies that $S_b$ is smooth for generic $b \in \mathcal{A}$. Let us define the {\em discriminant divisor} $\mathcal{D} \subset \mathcal{A}$ as
\begin{equation*}
\mathcal{D} = \{ b \in \mathcal{A} \, | \, S_b \text{ is not smooth } \}.
\end{equation*}
It can be shown that $\mathcal{D}$ is an irreducible divisor in $\mathcal{A}$ \cite[Corollary 1.5]{kp}. Any Higgs bundle $(E,\Phi) \in \mathcal{M}_{n,d}$ defines a spectral curve given by the characteristic equation of $\Phi$:
\begin{equation*}
det( \lambda - \pi^*(\Phi) ) = 0.
\end{equation*}
Note that $det( \lambda - \pi^*(\Phi) ) = s_b$, where $b = h(E,\Phi)$ and $s_b$ is given by Equation (\ref{equsectiona}). So the spectral curve $det(\lambda - \pi^*(\Phi) ) = 0$ associated to $(E,\Phi)$ is precisely the spectral curve associated to $b = h(E,\Phi) \in \mathcal{A}$.\\

Let $\mathcal{A}^{\rm reg} = \mathcal{A} - \mathcal{D}$ denote the complement of the discriminant divisor. We say that $b \in \mathcal{A}$ is {\em regular} if $b \in \mathcal{A}^{\rm reg}$. Thus $b$ is regular precisely if the spectral curve $S_b$ is smooth. Similarly we let $\mathcal{M}_{n,d}^{\rm reg} = h^{-1}( \mathcal{A}^{\rm reg} )$ be the space of Higgs bundles with smooth spectral curve. Then $\mathcal{M}_{n,d}^{\rm reg} \subset \mathcal{M}_{n,d}$ is a dense open subset. Moreover one can show that every element of $\mathcal{M}_{n,d}^{\rm reg}$ is stable, so that $\mathcal{M}_{n,d}^{\rm reg} \subseteq \mathcal{M}_{n,d}^{\rm sm}$ \cite{bnr}.\\

Let $b \in \mathcal{A}^{\rm reg}$ and let $\pi : S_b \to \Sigma$ be the associated spectral curve. To simplify notation we will denote $S_b$ simply as $S$ when no confusion is likely to occur. Let $K_S$ denote the canonical bundle of $S$. By the adjunction formula we have $K_S \cong \pi^*(K^n)$. Set $\tilde{d} = d + n(n-1)(g-1)$ and let $Jac_{\tilde{d}}(S)$ denote the space of degree $\tilde{d}$ line bundles on $S$. If $L \in Jac_{\tilde{d}}(S)$ we have by Grothendieck-Riemann-Roch that $E = \pi_*(L)$ is a rank $n$, degree $d$ holomorphic vector bundle on $\Sigma$. The tautological section $\lambda$ may be viewed as a map $\lambda : L \to L \otimes \pi^* K$, which then pushes down to a map $\Phi : E \to E \otimes K$. In this way we have constructed from $L$ a Higgs bundle pair $(E,\Phi)$. One can then show that $det( \lambda - \pi^*(\Phi) ) = \lambda^n + a_2 \lambda^{n-2} + \dots + a_n$ \cite{bnr}, so $S$ is the spectral curve associated to $(E,\Phi)$. Define the Prym variety $Prym_{\tilde{d}}(S,\Sigma)$ as follows:
\begin{equation*}
Prym_{\tilde{d}}(S,\Sigma) = \{ L \in Jac_{\tilde{d}}(S) \, | \, det( \pi_* L ) \cong L_0 \, \}.
\end{equation*}
Therefore if $L \in Prym_{\tilde{d}}(S,\Sigma)$, the associated Higgs bundle $(E,\Phi)$ is trace-free with determinant $L_0$. Conversely, any $(E,\Phi) \in \mathcal{M}_{n,d}$ with associated spectral curve $S$ is obtained in this way from some $L \in Prym_{\tilde{d}}(S,\Sigma)$ \cite{hit2,bnr}. This shows that the fibre $h^{-1}(b)$ of the Hitchin system over $b$ is the abelian variety $Prym_{\tilde{d}}(S,\Sigma)$. Let $Nm : Jac(S) \to Jac(\Sigma)$ denote the norm map associated to $\pi : S \to \Sigma$. Then for any $L \in Jac(S)$ we have $Nm(L) = det( \pi_* L ) \otimes K^{n(n-1)/2}$ \cite{bnr}, hence we can alternatively characterise $Prym_{\tilde{d}}(S,\Sigma)$ as those line bundles $L \in Jac_{\tilde{d}}(S)$ with $Nm(L) = L_0 \otimes K^{n(n-1)/2}$.


\section{Hamiltonian flows of the Hitchin system}\label{sechamflows}


\subsection{Flows on non-singular fibres}\label{sechamflow1}

Given a holomorphic function $f : \mathcal{A} \to \mathbb{C}$ on $\mathcal{A}$, we let $X_f \in H^0( \mathcal{M}_{n,d}^{\rm sm} , T\mathcal{M}_{n,d}^{{\rm sm}})$ denote the corresponding Hamiltonian vector field on $\mathcal{M}_{n,d}^{{\rm sm}}$, given by the relation:
\begin{equation*}
i_{X_f} \Omega_I = h^*(df).
\end{equation*}
It will be useful to have a more explicit description of these Hamiltonian vector fields. Consider a point $b \in \mathcal{A}^{{\rm reg}}$ and let $\pi : S \to \Sigma$ be the corresponding spectral curve. The fibre $h^{-1}(b)$ of the Hitchin system over $b$ is $Prym_{\tilde{d}}(S,\Sigma) \subset Jac_{\tilde{d}}(S)$, so for each $m \in h^{-1}(b)$ the vertical tangent space $T_m h^{-1}(b)$ can be canonically identified with the kernel of $Nm_* : H^1( S , \mathcal{O}_S ) \to H^1( \Sigma , \mathcal{O}_\Sigma )$. Under this identification $X_f(m)$ is an element of $H^1(S , \mathcal{O}_S)$.\\

Recall that $\mathcal{A} = \bigoplus_{j=2}^n H^0(\Sigma , K^j)$ and define $\mathcal{A}^* = \bigoplus_{j=2}^n H^1(\Sigma , K^{1-j})$. Serre duality applied termwise defines a dual pairing $\langle \, \, , \, \, \rangle : \mathcal{A}^* \otimes \mathcal{A} \to \mathbb{C}$.

\begin{proposition}
Given $b = (b_2 , \dots , b_n) \in \mathcal{A}^{{\rm reg}}$, let $\gamma_b : \mathcal{A}^* = \bigoplus_{j=2}^n H^1(\Sigma, K^{1-j})  \to H^1(S, \mathcal{O}_S)$ be given by
\begin{equation*}
\gamma_b\left( \sum_{j=2}^n \mu_j \right) = \sum_{j=2}^n \pi^*(\mu_j) \left(\lambda^{j-1} - \frac{(j-1) \pi^* (b_{j-1})}{n}\right),
\end{equation*}
Where $\mu_j \in H^1(\Sigma , K^{1-j})$. Then for any $f \in \mathcal{O}(\mathcal{A})$ and any $m \in h^{-1}(b)$, we have
\begin{equation}\label{ham1}
X_f(m) = \gamma_b( df(b) ).
\end{equation}
\end{proposition}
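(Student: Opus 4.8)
The plan is to exploit that $h$ is a Lagrangian fibration, so the defining relation $i_{X_f}\Omega_I = h^*(df)$ pins down $X_f$ as the image of $df(b)$ under the canonical duality between the vertical tangent space of the fibre and the cotangent space $T_b^*\mathcal{A}$; the content of the proposition is then the explicit identification of this duality with $\gamma_b$.

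First I would check that $X_f(m)$ is genuinely tangent to the fibre. For any vertical $v$ (i.e. $h_*v = 0$) we have $\Omega_I(X_f(m),v) = (h^*df)(v) = df(h_*v) = 0$, so $X_f(m)$ is $\Omega_I$-orthogonal to the entire vertical space; since the fibre $Prym_{\tilde d}(S,\Sigma)$ is Lagrangian, the vertical space equals its own orthogonal complement, whence $X_f(m)$ is itself vertical, that is $X_f(m) \in \ker(Nm_*) \subset H^1(S,\mathcal{O}_S)$. In parallel I would verify that $\gamma_b(\mu)$ lands in $\ker(Nm_*)$: by the projection formula $Nm_*(\pi^*(\mu_j)\pi^*(b_{j-1})) = n\,\mu_j b_{j-1}$, while $Nm_*(\pi^*(\mu_j)\lambda^{j-1}) = \mu_j\,\pi_*(\lambda^{j-1}) = (j-1)\mu_j b_{j-1}$, since the power sum $\pi_*(\lambda^{j-1})$ of the eigenvalues of $\Phi$ equals $Tr(\Phi^{j-1}) = (j-1)b_{j-1}$. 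The correction term $-\tfrac{(j-1)\pi^*(b_{j-1})}{n}$ is chosen precisely so that these two contributions cancel.

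It then remains to prove the pairing identity $\Omega_I(\gamma_b(\mu),v) = \langle \mu, h_*(v) \rangle$ for all $v \in T_m\mathcal{M}_{n,d}^{\rm sm}$, where $h_*(v) = (Tr(\Phi^{j-1}\dot{\Phi}))_{j=2}^n$; granting this for $\mu = df(b)$ and using that $\Omega_I$ is nondegenerate on the full tangent space, the defining relation forces $X_f(m) = \gamma_b(df(b))$. Because $\gamma_b(\mu)$ is vertical and the fibre is Lagrangian, the left-hand side depends only on $h_*(v)$, so it suffices to test against convenient representatives, namely lifts $v$ with $\dot{A}_v = 0$, which realise all of $T_b\mathcal{A}$ on the regular locus where $h$ is submersive. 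The core computation is to make the spectral correspondence explicit at the level of deformations: under the direct image the tautological section $\lambda$ corresponds to $\Phi$, so the class $\pi^*(\mu_j)\lambda^{j-1}$ pushes to $\mu_j\Phi^{j-1} \in H^1(\Sigma, End(E))$, and the $b_{j-1}$-correction removes exactly the trace, identifying the $\dot{A}$-component of $\gamma_b(\mu)$ with the trace-free part of $\sum_j \mu_j\Phi^{j-1}$. Substituting into \eqref{equomegai1} with $\dot{A}_v = 0$ kills the second term, so only $\dot{A}_{\gamma}$ is needed, and since subtracting a multiple of the identity does not affect the trace against the trace-free $\dot{\Phi}_v$, I obtain $\Omega_I(\gamma_b(\mu),v) = \int_\Sigma \sum_j \mu_j\,Tr(\Phi^{j-1}\dot{\Phi}_v) = \langle \mu, h_*(v) \rangle$ by Serre duality.

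The main obstacle is the explicit dictionary between $H^1(S,\mathcal{O}_S)$-classes and the Dolbeault data $(\dot{A},\dot{\Phi})$ on $\Sigma$ furnished by the direct-image functor of the spectral correspondence — in particular justifying that $\lambda^{j-1}$ pushes to $\Phi^{j-1}$ through the embedding $\pi_*\mathcal{O}_S \hookrightarrow End(E)$ of the subalgebra generated by $\Phi$, and carefully tracking the Prym/trace correction — together with the routine but necessary confirmation that the $\dot{A}_v = 0$ slice surjects onto $T_b\mathcal{A}$ on $\mathcal{M}_{n,d}^{\rm reg}$, so that testing against these lifts determines $X_f(m)$ completely.
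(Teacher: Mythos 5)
Most of what you write is sound and matches the paper's own strategy: the verticality of $X_f(m)$, the projection-formula check that $\gamma_b(\mu)$ lies in $\ker(Nm_*)$, and the spectral dictionary identifying the Dolbeault representative of $\gamma_b(\mu)$ with the pair $(\dot A_\gamma,0)$, where $\dot A_\gamma=\sum_{j=2}^n\mu_j\bigl(\Phi^{j-1}-\tfrac{(j-1)b_{j-1}}{n}Id\bigr)$, are all correct. The genuine gap is the step you dismiss as ``routine'': that the slice of tangent vectors with $\dot A_v=0$, i.e.\ the image of $H^0(\Sigma,End_0(E)\otimes K)$ in $T_m\mathcal{M}_{n,d}^{\rm reg}$, surjects onto $T_b\mathcal{A}$ under $h_*$ at \emph{every} $m\in h^{-1}(b)$. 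Nothing in your argument supplies this, and it is not routine. By Serre duality, surjectivity of $\dot\Phi\mapsto\bigl(Tr(\Phi^{j-1}\dot\Phi)\bigr)_{j=2}^n$ is equivalent to injectivity of the map $\mathcal{A}^*\to H^1(\Sigma,End_0(E))$, $\mu\mapsto\bigl[\sum_j\mu_j(\Phi^{j-1})_0\bigr]$, i.e.\ to injectivity of $\gamma_b$ composed with the pushforward $H^1(S,\mathcal{O}_S)\to H^1(\Sigma,End_0(E))$; geometrically it says that the Hitchin fibre is everywhere transverse to the Lagrangian leaf on which $\overline{\partial}_E$ is held fixed, equivalently that $L\mapsto\pi_*L$ is an immersion of $Prym_{\tilde{d}}(S,\Sigma)$ into the deformation space of $E$. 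This is a statement of essentially the same depth as the proposition you are proving: deducing it from the proposition would be circular, and proving it independently requires the kind of nondegeneracy machinery in Lemma \ref{lemint} and the proposition following it. Note also that at an arbitrary point of $h^{-1}(b)$ the bundle $E$ need not be stable or even simple, so no generic fact about $T^*\mathcal{SU}_{n,d}^{\rm s}$ can be invoked.

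Fortunately the gap is self-inflicted and the repair is one line: do not restrict to the slice. For an \emph{arbitrary} tangent vector $v=(\dot A_v,\dot\Phi_v)$, formula \eqref{equomegai1} gives
\begin{equation*}
\Omega_I\bigl((\dot A_\gamma,0),(\dot A_v,\dot\Phi_v)\bigr)=\int_\Sigma Tr(\dot A_\gamma\dot\Phi_v)=\sum_{j=2}^n\int_\Sigma\mu_j\,Tr(\Phi^{j-1}\dot\Phi_v)=\langle\mu,h_*(v)\rangle,
\end{equation*}
since the second term of \eqref{equomegai1} vanishes on account of $\dot\Phi_\gamma=0$ (not on account of $\dot A_v$), and replacing $(\Phi^{j-1})_0$ by $\Phi^{j-1}$ costs nothing because $\dot\Phi_v$ is trace-free. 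Thus $i_{\gamma_b(df(b))}\Omega_I=h^*(df)$ holds against all of $T_m\mathcal{M}_{n,d}^{\rm reg}$, and nondegeneracy of $\Omega_I$ alone forces $X_f(m)=\gamma_b(df(b))$, with no surjectivity input whatsoever. With this change your argument coincides with the paper's proof.
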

\begin{proof}
Let $(E,\Phi)$ be a Higgs bundle representing the point $m \in h^{-1}(b)$. Let $(\dot{A} , \dot{\Phi}) \in T_m \mathcal{M}^{{\rm reg}}_{n,d}$ be a tangent vector at $m$. Differentiating the Hitchin map gives $h_*( \dot{A} , \dot{\Phi} ) = ( Tr(\Phi \dot{\Phi} ) , \dots , Tr( \Phi^{n-1}\dot{\Phi} ) )$. Let $f$ be a holomorphic function on $\mathcal{A}$ and set $df(m) = \sum_{j=2}^n [\mu_j]$ where $\mu_j \in \Omega^{0,1}(\Sigma , K^{1-j})$ represents a class $[\mu_j] \in H^1(\Sigma , K^{1-j})$. We find
\begin{equation*}
\begin{aligned}
\Omega_I ( X_f(m) , (\dot{A} , \dot{\Phi} ) ) &= \langle df(m) , h_*( \dot{A} , \dot{\Phi} ) \rangle \\
&= \left\langle \sum_{j=2}^n \mu_j , h_*( \dot{A} , \dot{\Phi} ) \right\rangle \\
&= \sum_{j=2}^n \int_{\Sigma} \mu_j Tr( \Phi^{j-1}\dot{\Phi}) \\
&= \sum_{j=2}^n \int_{\Sigma} Tr( \Phi^{j-1}\mu_j  \dot{\Phi}) \\
&= \Omega_I \left( \left( \sum_{j=2}^n (\Phi^{j-1})_0\mu_j  , 0 \right) , \left( \dot{A} , \dot{\Phi} \right) \right)
\end{aligned}
\end{equation*}
where $(\Phi^{j-1})_0$ denotes the trace-free part of $\Phi^{j-1}$. Note that $Tr( \Phi^{j-1} ) = (j-1)b_{j-1}$, so that $(\Phi^{j-1})_0 = \Phi^{j-1} - \tfrac{(j-1)b_{j-1}}{n}Id$. It follows that $X_f(m)$ is represented by a pair $(\dot{A} , \dot{\Phi})$ of the form $(\dot{A} , \dot{\Phi}) = ( \sum_{j=2}^n (\Phi^{j-1} - \tfrac{(j-1)b_{j-1}}{n}Id) \mu_j , 0 )$. Now suppose that $L$ is the line bundle on $S$ corresponding to $(E,\Phi)$, so $E = \pi_*(L)$ and $\Phi$ is obtained by pushing forward $\lambda : L \to L \otimes \pi^*(K)$. Observe that $\dot{A} = \sum_{j=2}^n (\Phi^{j-1} - \tfrac{(j-1)b_{j-1}}{n}Id) \mu_j $ is obtained by pushing forward $\sum_{j=2}^n \pi^*(\mu_j) (\lambda^{j-1} - \tfrac{(j-1)b_{j-1}}{n} )$, viewed as a deformation of the holomorphic structure on $L$. Equation (\ref{ham1}) follows immediately.
\end{proof}

\begin{remark}\label{remhamflow}
If $f$ is any global holomorphic on $\mathcal{A}$, then the corresponding Hamiltonian vector field $X_f$ is defined on all of $\mathcal{M}_{n,d}^{{\rm sm}}$. For any point $m = (\overline{\partial}_E,\Phi) \in \mathcal{M}_{n,d}^{\rm sm}$ we have  
\begin{equation*}
X_f(m) = (\dot{A} , \dot{\Phi}) = \left( \sum_{j=2}^n \mu_j \left(\Phi^{j-1} - \tfrac{(j-1)b_{j-1}}{n}Id\right) , 0\right),
\end{equation*}
where $df(b) = \sum_{j=2}^n \mu_j$ and $b = \pi(m)$. Moreover we can integrate $X_f$ to a biholomorphism $e^{X_f} : \mathcal{M}_{n,d}^{{\rm sm}} \to \mathcal{M}_{n,d}^{{\rm sm}}$ given by
\begin{equation*}
e^{X_f}\left(\overline{\partial}_E , \Phi \right) = \left(\overline{\partial}_E + \sum_{j=2}^n \mu_j \left(\Phi^{j-1} - \tfrac{(j-1)b_{j-1}}{n}Id\right), \Phi \right).
\end{equation*}
The main point to note is that $e^{X_f}$ so defined, preserves semi-stability and preserves the $S$-equivalence relation. This is clear because $(E,\Phi)$ and $e^{X_f}(E,\Phi)$ have the same $\Phi$-invariant sub-bundles. It is also clear that $e^{X_f}$ even extends to an automorphism on the whole of $\mathcal{M}_{n,d}$.
\end{remark}

\begin{lemma}\label{lemint}
Given $b = (b_2 , \dots , b_n) \in \mathcal{A}^{{\rm reg}}$, let $h_i$ denote the complete homogeneous symmetric polynomials, defined in Equation (\ref{equhpoly}). Then for any $\tau \in \Omega^{0,1}(\Sigma , K^{n-r})$ with $r \ge 0$ we have
\begin{equation*}
\int_S \pi^*(\tau) \lambda^r = \begin{cases} 0 & \text{if } r < n-1, \\  \int_{\Sigma} \tau & \text{if } r = n - 1, \\ \int_{\Sigma} \tau h_{r-n+1} & \text{if } r \ge n. \end{cases}
\end{equation*}
\begin{proof}
Using a partition of unity and the fact that the fibres of $\pi$ containing branch points have measure zero, it suffices to consider the case that $\tau$ is supported in an open set $U \subseteq \Sigma$ which contains no branch points and such that $\pi^{-1}(U) \to U$ is the trivial $n$-fold covering. The derivative $d\pi$ of $\pi$ is a holomorphic section of $K_S \pi^*(K^{-1}) = \pi^*(K^{n-1})$. If the characteristic equation for $S$ is given by $\lambda^{n} + a_2 \lambda^{n-2} + \dots + a_n = 0$ then we claim that 
\begin{equation}\label{dpi}
d\pi = n \lambda^{n-1} + (n-2)a_2 \lambda^{n-3} + \dots + a_{n-1}.
\end{equation}
Indeed both sides of (\ref{dpi}) have the same divisor, so there is a unique isomorphism $K_S \cong \pi^*(K^n)$ for which (\ref{dpi}) holds. Since the covering $\pi^{-1}(U) \to U$ is trivial, the characteristic polynomial may be factored as $(\lambda - \lambda_1) \cdots (\lambda - \lambda_n)$, where the roots $\lambda_i$ are holomorphic sections of $K|_U$. Then
\begin{equation*}
d \pi = \sum_{j=1}^n (\lambda - \lambda_1) \cdots (\widehat{ \lambda - \lambda_i } ) \cdots (\lambda - \lambda_n),
\end{equation*}
where we use $\widehat{ \lambda - \lambda_i }$ to denote omission of the factor $\lambda - \lambda_i$. It follows that
\begin{equation*}
\begin{aligned}
\int_{\pi^{-1}(U)} \pi^*(\tau) \lambda^r &= \int_U \tau \sum_{i=1}^n \frac{ \lambda_i^r}{ d\pi |_{\lambda = \lambda_i} } \\
&= \int_U \tau \sum_{i=1}^n \frac{\lambda_i^r}{ \Pi_{a \neq i} (\lambda_i - \lambda_a) }.
\end{aligned}
\end{equation*}
For any given $u \in U$, we perform a contour integral in the fibre $K_u \cong \mathbb{C}$ over a contour containing all zeros $\lambda_1(u), \lambda_2(u) , \dots , \lambda_n(u)$ of the characteristic polynomial at $u$. For $0 \le r \le n-1$ we find:
\begin{equation*}
\sum_{i=1}^n \frac{\lambda_i^r}{ \Pi_{a \neq i} (\lambda_i - \lambda_a) } = \begin{cases} 0 & \text{if } r < n - 1, \\ 1 & \text{if } r = n-1. \end{cases}
\end{equation*}
This proves the lemma for $0 \le r \le n-1$. The case $r \ge n$ can be proved inductively using the characteristic equation in the form $\lambda^n = -a_2 \lambda^{n-2} - a_3 \lambda^{n-3} - \dots - a_n$. For example when $r = n$, we have
\begin{equation*}
\int_S \pi^*(\tau) \lambda^n = \int_S \pi^*(\tau)( -a_2 \lambda^{n-2} - a_3 \lambda^{n-3} - \dots - a_n ) = 0.
\end{equation*}
In general for $i \ge 0$, we obtain an identity of the form $\int_S \pi^*(\tau) \lambda^{n-1+i} = \int_\Sigma \tau k_i$, where the $k_i$ are given inductively by $k_0 = 1$, $k_i = -\sum_{j=1}^{i} a_j k_{i-j}$, $i \ge 1$. Comparing with (\ref{equnewton}), it follows that the $k_i = h_i$ are the complete homogeneous symmetric polynomials and this proves the lemma.
\end{proof}
\end{lemma}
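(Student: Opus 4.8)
The plan is to compute the integral over $S$ by pushing it forward to $\Sigma$ and reducing to a fibrewise residue calculation. Since $\tau \in \Omega^{0,1}(\Sigma, K^{n-r})$ and $\lambda^r \in H^0(S, \pi^*K^r)$, the product $\pi^*(\tau)\lambda^r$ is a $(0,1)$-form on $S$ valued in $\pi^*(K^n) = K_S$, i.e.\ a top-degree $(1,1)$-form on the curve $S$, so it can be integrated and the integral localizes on the support of $\tau$. Using a partition of unity on $\Sigma$ together with the fact that the finitely many branch fibres of $\pi$ have measure zero, I would immediately reduce to the case where $\tau$ is supported in a coordinate disc $U \subset \Sigma$ over which $\pi^{-1}(U)\to U$ is a trivial $n$-sheeted cover. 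Over such a $U$ the characteristic polynomial $P = \lambda^n + \pi^*(a_2)\lambda^{n-2} + \cdots + \pi^*(a_n)$ factors as $\prod_{i=1}^n(\lambda - \lambda_i)$ with the $\lambda_i$ holomorphic sections of $K|_U$, and the sheets are the graphs $\lambda = \lambda_i$.

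The heart of the matter, and the only step where I expect genuine subtlety, is the correct pushforward formula: one must track the adjunction isomorphism $K_S \cong \pi^*(K^n)$ and see that it introduces the Jacobian factor $d\pi = P'(\lambda)$ in the denominator. The total space of $K$ carries its tautological holomorphic symplectic form, and the Poincar\'e (Gelfand--Leray) residue of this form along $S = \{P = 0\}$ is the holomorphic $1$-form whose restriction to the sheet $\lambda = \lambda_i$ equals $dz/P'(\lambda_i)$. This is precisely the assertion that, under $K_S \cong \pi^*(K^n)$, one has $d\pi = n\lambda^{n-1} + (n-2)\pi^*(a_2)\lambda^{n-3} + \cdots + \pi^*(a_{n-1}) = P'(\lambda)$, which is formula (\ref{dpi}); I would verify this by checking that both sides have the same zero divisor and fixing the resulting isomorphism. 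Evaluating $\pi^*(\tau)\lambda^r$ sheet by sheet and summing then gives
\begin{equation*}
\int_{\pi^{-1}(U)} \pi^*(\tau)\lambda^r = \int_U \tau \sum_{i=1}^n \frac{\lambda_i^r}{P'(\lambda_i)} = \int_U \tau \sum_{i=1}^n \frac{\lambda_i^r}{\prod_{a \neq i}(\lambda_i - \lambda_a)}.
\end{equation*}

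It then remains to evaluate the purely algebraic symmetric sum $\sigma_r := \sum_{i=1}^n \lambda_i^r / \prod_{a\ne i}(\lambda_i - \lambda_a)$. I would read this as the sum of the residues of the rational function $z^r/P(z)$ at its simple poles $\lambda_1, \dots, \lambda_n$, which by the residue theorem equals the coefficient of $z^{-1}$ in the Laurent expansion of $z^r/P(z)$ at $z = \infty$. Since
\begin{equation*}
\frac{z^r}{P(z)} = z^{r-n}\prod_{a=1}^n\left(1 - \frac{\lambda_a}{z}\right)^{-1} = \sum_{k\ge 0} h_k(\lambda_1,\dots,\lambda_n)\, z^{r-n-k},
\end{equation*}
the coefficient of $z^{-1}$ is $h_{r-n+1}$, with the convention $h_m = 0$ for $m<0$ and $h_0 = 1$. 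This yields $\sigma_r = 0$ for $r<n-1$, $\sigma_r = 1$ for $r = n-1$, and $\sigma_r = h_{r-n+1}$ for $r \ge n$, which matches the three cases of the statement once the local contributions $\int_U \tau\,\sigma_r$ are reassembled over $\Sigma$. Alternatively the case $r \ge n$ can be derived inductively from the relation $\lambda^n = -\pi^*(a_2)\lambda^{n-2} - \cdots - \pi^*(a_n)$ holding on $S$, giving the recursion $k_0 = 1$, $k_i = -\sum_{j=1}^i a_j k_{i-j}$ that Newton's identities (\ref{equnewton}) identify with the $h_i$. Once the factor $P'(\lambda_i) = d\pi|_{\lambda=\lambda_i}$ is correctly located in the pushforward step, everything else is a standard residue computation.
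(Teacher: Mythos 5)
Your proposal is correct, and its geometric core is identical to the paper's: the same partition-of-unity localization away from branch fibres, the same identification $d\pi = P'(\lambda)$ under the adjunction isomorphism $K_S \cong \pi^*(K^n)$ (verified, as in the paper, by comparing divisors), and the same resulting pushforward formula with $\prod_{a\neq i}(\lambda_i - \lambda_a)$ in the denominator. Where you diverge is in evaluating the fibrewise symmetric sum $\sigma_r = \sum_i \lambda_i^r/\prod_{a\neq i}(\lambda_i-\lambda_a)$. The paper treats this in two stages: a contour-integral (residue) argument that only covers $0 \le r \le n-1$, and then, for $r \ge n$, an induction on $S$ itself using the relation $\lambda^n = -\pi^*(a_2)\lambda^{n-2} - \cdots - \pi^*(a_n)$, which produces a recursion $k_i = -\sum_{j=1}^i a_j k_{i-j}$ identified with the $h_i$ via Newton's identities (\ref{equnewton}). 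You instead push the residue calculation all the way: $\sigma_r$ is minus the residue at infinity of $z^r/P(z)$, and the Laurent expansion $z^r/P(z) = \sum_{k \ge 0} h_k\, z^{r-n-k}$ coming from the generating function $\prod_a(1-\lambda_a/z)^{-1} = \sum_k h_k z^{-k}$ gives all three cases of the lemma in one stroke. This is a genuine simplification: it is uniform in $r$, dispenses with the separate induction, and makes the appearance of the complete homogeneous symmetric polynomials transparent. The only point you should make explicit is that the $h_j$ of the paper are defined representation-theoretically, as $Tr_{S^j(\mathbb{C}^n)}$ in Equation (\ref{equhpoly}), so your argument needs the standard fact that this trace equals the complete homogeneous symmetric polynomial of the eigenvalues (equivalently, the generating-function identity above); this is exactly the content that the paper extracts instead from the third identity in (\ref{equnewton}). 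What the paper's inductive route buys in exchange is that it never leaves the curve $S$: the recursion is forced by the characteristic equation holding on $S$, with no Laurent expansion needed.
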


\begin{proposition}
Given $b \in \mathcal{A}^{{\rm reg}}$, let $\psi_b : \mathcal{A} \to H^0(S,K_S)$ be the composition
\begin{equation*}
\xymatrix{
\mathcal{A} \ar[rr]^-{(\gamma_b^t)^{-1}} & & H^1(S,\mathcal{O}_S)^* \ar[r]^-{\cong} & H^0(S, K_S) \ar[r]^-{\cong} & H^0( S , \pi^*K^n )
}
\end{equation*} 
where the second arrow is Serre duality. Then for $\nu = \sum_{j=2}^n \nu_j \in \bigoplus_{j=2}^n H^0(\Sigma , K^j) = \mathcal{A}$ we have:
\begin{equation*}
\psi_b(\nu ) = \sum_{j=2}^n \pi^*(\nu_j)( \lambda^{n-j} + a_2 \lambda^{n-j-2} + \dots + a_{n-j} ).
\end{equation*}
\end{proposition}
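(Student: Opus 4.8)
The plan is to unwind the definition of $\psi_b$ and reduce the stated formula to a single Serre-duality pairing identity on the spectral curve, which I would then verify termwise using Lemma \ref{lemint} and Newton's identities (\ref{equnewton}). First I would record that, by Equation (\ref{ham1}) together with the fact that the Hamiltonian flows sweep out the whole abelian fibre, the map $\gamma_b$ is an isomorphism of $\mathcal{A}^*$ onto the vertical tangent space $\ker(Nm_*) \subset H^1(S,\mathcal{O}_S)$ (a dimension count, $\dim\mathcal{A}^* = (n^2-1)(g-1) = \dim\ker(Nm_*)$, confirms bijectivity). Under the Serre pairing on $S$ this space is dual to $H^0(S,K_S)$ modulo the annihilator of $\ker(Nm_*)$, which is $\pi^*H^0(\Sigma,K) = (Nm_*)^*H^0(\Sigma,K)$. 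Consequently, unravelling the transpose $\gamma_b^t$ and the Serre duality isomorphism, proving the proposition amounts to showing that the element $\psi_b(\nu)$ defined by the stated formula, which indeed lies in $H^0(S,\pi^*K^n) = H^0(S,K_S)$, satisfies
\[
\int_S \psi_b(\nu)\,\gamma_b(\mu) \;=\; \langle \mu, \nu\rangle \;=\; \sum_{j=2}^n \int_\Sigma \mu_j \nu_j
\]
for all $\mu = \sum_{k=2}^n \mu_k \in \mathcal{A}^*$ and $\nu = \sum_{j=2}^n \nu_j \in \mathcal{A}$, the left-hand side being the Serre pairing on $S$. The stated formula then exhibits the canonical representative modulo $\pi^*H^0(\Sigma,K)$.

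By bilinearity it suffices to compute, for each pair $(j,k)$,
\[
I_{jk} = \int_S \pi^*(\nu_j)\Big(\textstyle\sum_{i=0}^{n-j} a_i \lambda^{n-j-i}\Big)\,\pi^*(\mu_k)\Big(\lambda^{k-1} - \tfrac{(k-1)\pi^*(b_{k-1})}{n}\Big),
\]
and to show $I_{jk} = \delta_{jk}\int_\Sigma \nu_j\mu_j$. I would expand the product into monomials $\pi^*(\nu_j\mu_k a_i)\lambda^r$ and apply Lemma \ref{lemint} in the unified form $\int_S \pi^*(\tau)\lambda^r = \int_\Sigma \tau\, h_{r-n+1}$, using the conventions $h_0 = 1$ and $h_m = 0$ for $m < 0$ (so that the three cases of the lemma collapse into one). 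The terms generated by the factor $-\tfrac{(k-1)b_{k-1}}{n}$ carry exponents $r = n-j-i \le n-2$ since $j \ge 2$, hence $h_{r-n+1} = h_{1-j-i} = 0$ and they drop out entirely. The surviving terms, coming from $\lambda^{k-1}$, have $r - n + 1 = k-j-i$, which gives
\[
I_{jk} = \int_\Sigma \nu_j\mu_k \Big(\textstyle\sum_{i=0}^{n-j} a_i\, h_{k-j-i}\Big).
\]

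It then remains to evaluate the coefficient $\sum_{i=0}^{n-j} a_i h_{k-j-i}$ by a case analysis on $k-j$. If $k<j$ every index has $k-j-i<0$, so the sum vanishes; if $k=j$ only $i=0$ survives, contributing $a_0 h_0 = 1$; and if $k>j$, writing $m = k-j$ with $1 \le m \le n-j$, the nonzero range is $0 \le i \le m$ and the sum equals $\sum_{i=0}^m h_{m-i}a_i = 0$ by the third of Newton's identities in (\ref{equnewton}). This yields $I_{jk} = \delta_{jk}\int_\Sigma\nu_j\mu_j$, and summing over $j,k$ gives the pairing identity, completing the proof. I expect the only genuinely delicate point to be the conceptual reduction of the first paragraph: correctly interpreting $(\gamma_b^t)^{-1}$ given that $\gamma_b$ is an isomorphism onto $\ker(Nm_*)$ rather than onto all of $H^1(S,\mathcal{O}_S)$, verifying that the displayed formula is precisely the representative singled out by the composition, and fixing the Serre-duality sign conventions. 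Once that framework is in place, Steps two and three are a routine, if bookkeeping-heavy, application of the lemma and Newton's identities.
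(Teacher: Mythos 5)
Your proof is correct and takes essentially the same route as the paper's: reduce the claim to verifying the Serre-duality pairing identity $\int_S \gamma_b(\mu)\,\psi_b(\nu) = \langle \mu , \nu \rangle$, evaluate it termwise via Lemma \ref{lemint} (the correction terms $-\tfrac{(k-1)b_{k-1}}{n}$ dying because their $\lambda$-exponents stay below $n-1$), and kill the off-diagonal contributions with the third of Newton's identities (\ref{equnewton}). The only differences are organizational: the paper routes the computation through the auxiliary map $\theta_b(\nu_j) = \pi^*(\nu_j)\lambda^{n-j}$ and leaves the Newton-identity cancellation as ``it follows easily,'' while you expand directly and make both that cancellation and the interpretation of $(\gamma_b^t)^{-1}$ (well-definedness modulo the annihilator $\pi^*H^0(\Sigma,K)$) explicit.
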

\begin{proof}
Let $\mu = \sum_{j=2}^n \mu_j \in \bigoplus_{j=2}^n H^1(\Sigma , K^{1-j}) = \mathcal{A}^*$. We need to show for all such $\mu$ that:
\begin{equation}\label{paired}
\int_S \gamma_b(\mu) \psi_b(\nu) = \langle \mu , \nu \rangle = \sum_{j=2}^n \int_{\Sigma} \mu_j \nu_j.
\end{equation}
For this it is convenient to first introduce a map $\theta_b : \mathcal{A} \to H^0(S , K_S) \cong H^0(S, \pi^*K^n)$ which for $\nu_j \in H^0(\Sigma , K^j)$ is given by $\theta_b(\nu_j) = \pi^*(\nu_j) \lambda^{n-j}$. Using Lemma \ref{lemint}, we have:
\begin{equation*}
\int_S \gamma_b(\mu_m) \theta_b(\nu_j) = \begin{cases} 0 & \text{if } m < j, \\ \int_{\Sigma} \mu_j \nu_j & \text{if } m = j, \\ \int_{\Sigma} \mu_m \nu_j h_{m-j}, & \text{if } m > j. \end{cases}
\end{equation*}
From this it follows easily that if we let $\psi_b(\nu_j ) = \pi^*(\nu_j)( \lambda^{n-j} + a_2 \lambda^{n-j-2} + \dots + a_{n-j} )$ then (\ref{paired}) is satisfied.
\end{proof}


\subsection{Flows on generic singular fibres}\label{sechamflow2}

We will need to examine the Hamiltonian vector fields along the fibres of $h$ lying over generic points of the discriminant divisor $\mathcal{D}$. As noted in \cite{kp}, a generic point $b \in \mathcal{D}$ has a spectral curve $S \to \Sigma$ which has exactly one singular point, which is an ordinary double point. The argument used in \cite{kp} is as follows: let $a_n \in H^0(\Sigma , K^n)$ have a unique double zero $u \in \Sigma$. Then the spectral curve with characteristic equation $\lambda^n + a_n = 0$ has an ordinary double point lying over $u$ and is smooth at all other points. Then by a semicontinuity argument there is a non-empty Zariski open subset $\mathcal{D}^0 \subset \mathcal{D}$ given by
\begin{equation*}
\mathcal{D}^0 = \{ b \in \mathcal{D} | S_b \text{ is irreducible and has a unique ordinary double point} \}.
\end{equation*}
Moreover, the discriminant divisor $\mathcal{D} \subset \mathcal{A}$ is irreducible (\cite[Corollary 1.5]{kp}), so the complement $\mathcal{D} - \mathcal{D}^0$ has positive codimension in $\mathcal{D}$.\\

Consider now the spectral curve $\pi : S \to \Sigma$ associated to a point $b \in \mathcal{D}^0$. Let $p \in S$ be the singular point of $S$ and $u = \pi(p)$. We let $\nu : S^\nu \to S$ be the normalization of $S$ and $\nu^{-1}(p) = \{ p_+ , p_- \}$. Set $\pi^\nu = \pi \circ \nu$. According to \cite{bnr}, the fibre $h^{-1}(b)$ of $\mathcal{M}_{n,d}$ lying over $b$ is a generalised Prym variety
\begin{equation*}
h^{-1}(b) = \overline{Prym}_{d-n(g-1)}(S,\Sigma) = \{ M \in \overline{Jac}_{d-n(g-1)}(S) \, | \, det( \pi_*M ) = L_0 \},
\end{equation*}
where $\overline{Jac}_{d-n(g-1)}(S)$ is the generalised Jacobian consisting of rank $1$, torsion-free sheaves on $S$ with Euler characteristic $d - n(g-1)$. For our purposes it will enough to focus on the dense open subset $\overline{Prym}_{d-n(g-1)}^{\text{loc free}}(S,\Sigma) \subset \overline{Prym}_{d-n(g-1)}(S,\Sigma)$ of the fibre consisting of those $M \in \overline{Prym}_{d-n(g-1)}(S,\Sigma)$ which are locally free. Such $M$ can be equivalently described as follows: start with a holomorphic line bundle $L$ on $S^\nu$ and an isomorphism $\phi : L_{p_+} \to L_{p_-}$. The sheaf of holomorphic sections $s$ of $L$ for which $s(p_-) = \phi( s(p_+) )$ defines a locally-free rank $1$ sheaf $M$ on $S$. Under this correspondence $M \in \overline{Prym}_{d-n(g-1)}(S,\Sigma)$ if and only if $det(\pi^\nu)_* L = L_0(-u)$. If we fix the underlying $\mathcal{C}^\infty$ line bundle $L$ then a point in $\overline{Prym}_{d-n(g-1)}^{\text{loc free}}(S,\Sigma)$ can be represented by a pair $(\overline{\partial}_L , \phi )$, where $\overline{\partial}_L$ is a $\overline{\partial}$-operator on $L$ and $\phi$ is an isomorphism $\phi : L_{p_+} \to L_{p_-}$. A gauge transformation $g : S^\nu \to \mathbb{C}^*$ acts on such pairs as $g( \overline{\partial}_L , \phi ) = (\overline{\partial}_L + g^{-1}dg , g(p_+)g(p_-)^{-1}\phi)$. Then $\overline{Prym}_{d-n(g-1)}^{\text{loc free}}(S,\Sigma)$ is identified with the set of equivalences classes of pairs $(\overline{\partial}_L , \phi)$, subject to the condition that $det (\pi^\nu)_* (L , \overline{\partial}_L) = L_0(-u)$.

\begin{lemma}\label{lemcanon}
The canonical bundle $K_{S^\nu}$ of $S^\nu$ is isomorphic to $(\pi^\nu)^*K^n(-p_+-p_-)$.
\end{lemma}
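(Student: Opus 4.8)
The plan is to realize the spectral curve $S$ as a Cartier divisor in a smooth complex surface, compute its dualizing sheaf by adjunction, and then relate that dualizing sheaf to the canonical bundle of the normalization $S^\nu$ via the local model of a node.

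First I would work inside the total space $X = \mathrm{Tot}(K)$ of the canonical bundle, with projection $\pi : X \to \Sigma$. A short computation with the exact sequence $0 \to T_{X/\Sigma} \to TX \to \pi^* T\Sigma \to 0$ together with the canonical identification $T_{X/\Sigma} \cong \pi^* K$ of the vertical tangent bundle gives
\[
K_X \cong \pi^*(K) \otimes \pi^*(K^{-1}) \cong \mathcal{O}_X,
\]
so $X$ has trivial canonical bundle. The spectral curve $S$ is the zero locus of the section $s_b = \lambda^n + \pi^*(a_2)\lambda^{n-2} + \cdots + \pi^*(a_n)$ of $\pi^*(K^n)$, so as a Cartier divisor in the smooth surface $X$ it satisfies $\mathcal{O}_X(S) \cong \pi^*(K^n)$, whether or not $S$ is smooth.

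Next, adjunction for the divisor $S$ in the smooth surface $X$ computes the dualizing sheaf $\omega_S \cong (K_X \otimes \mathcal{O}_X(S))|_S \cong \pi^*(K^n)|_S$; that is, $\omega_S \cong \pi^*(K^n)$, writing $\pi$ also for $\pi|_S$. This is exactly the extension to the singular case of the adjunction formula $K_S \cong \pi^*(K^n)$ used earlier for smooth $S$, and it is valid here precisely because $S$ is a Cartier divisor in a smooth surface, so that its dualizing sheaf is a line bundle given by the adjunction formula.

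Finally I would invoke the standard relation between the dualizing sheaf of a nodal curve and the canonical bundle of its normalization. Since $S$ has a single ordinary double point $p$ with $\nu^{-1}(p) = \{p_+, p_-\}$, a section of $\omega_S$ pulls back under $\nu$ to a differential on $S^\nu$ with at worst simple poles at $p_+$ and $p_-$ (with opposite residues on the two branches), whence $\nu^*\omega_S \cong K_{S^\nu}(p_+ + p_-)$, equivalently $K_{S^\nu} \cong \nu^*\omega_S(-p_+ - p_-)$. Combining this with $\omega_S \cong \pi^*(K^n)$ and $(\pi^\nu)^* = \nu^* \circ \pi^*$ yields
\[
K_{S^\nu} \cong (\pi^\nu)^*(K^n)(-p_+ - p_-),
\]
as claimed. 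I expect the step requiring the most care to be this last one: correctly pinning down the twist by $-p_+ - p_-$ coming from the normalization of the node, which I would justify through the explicit local description of $\omega_S$ near $p$ as differentials with opposite residues on the two branches meeting at the ordinary double point.
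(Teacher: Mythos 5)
Your proof is correct, but it follows a different route from the paper's. The paper never works with the singular curve $S$ directly: it blows up the total space of $K$ at the node $p$, identifies $S^\nu$ with the proper transform, writes $S^\nu = \rho^* S - 2E$ as divisors (the coefficient $2$ coming from the multiplicity of the ordinary double point), notes that the canonical bundle of the blown-up surface is $[E]$, and then applies ordinary adjunction for the \emph{smooth} curve $S^\nu$ in the \emph{smooth} blown-up surface, restricting $[E]$ to $S^\nu$ to produce the divisor $p_+ + p_-$. You instead apply Gorenstein adjunction to the singular Cartier divisor $S$ itself, obtaining its dualizing sheaf $\omega_S \cong \pi^*(K^n)|_S$, and then use the standard local description of the dualizing sheaf at a node (Rosenlicht differentials, locally generated by $dx/x = -dy/y$) to get $\nu^*\omega_S \cong K_{S^\nu}(p_+ + p_-)$. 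Both arguments rest on the same two inputs --- triviality of the canonical bundle of $\mathrm{Tot}(K)$ and $\mathcal{O}(S) \cong \pi^*(K^n)$ --- so the difference is in how the twist $-p_+ - p_-$ is extracted: the paper gets it from the discrepancy of the blow-up, you get it from the local structure of $\omega_S$ at the node. Your version requires the (standard but less elementary) theory of dualizing sheaves, and in exchange isolates the intermediate statement $\omega_S \cong \pi^*(K^n)$ --- the natural extension of the smooth-case adjunction $K_S \cong \pi^*(K^n)$ used elsewhere in the paper --- and generalizes verbatim to spectral curves with any number of nodes, whereas the paper's computation is self-contained within classical smooth-surface theory. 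One small caution: the residue-matching condition you mention characterizes which sections of $\nu_*\bigl(K_{S^\nu}(p_+ + p_-)\bigr)$ descend to sections of $\omega_S$; the line-bundle isomorphism $\nu^*\omega_S \cong K_{S^\nu}(p_+ + p_-)$ itself comes from the fact that the local generator $dx/x$ pulls back to a local generator, which is indeed the local verification you propose to carry out.
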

\begin{proof}
Let $\rho : K_p \to K$ be the blow-up of the total space of $K$ at the point $p$ and $E = \rho^{-1}(p)$ the exceptional divisor. Then $S^\nu \subset K_p$ is the proper transform of $S$. Thus as divisors on $K_p$ we have $S^\nu = \rho^*S - 2E$. Note that $K$ is the cotangent bundle of $\Sigma$, so it has trivial canonical bundle. It follows that the canonical bundle of $K_p$ is $[E]$ and by adjunction we have $K_{S^\nu} = ([S^\nu] + [E])|_{S^\nu} = (\rho^*[S] - [E])|_{S^\nu} = (\pi^\nu)^*K^n - [p_+ + p_-]$. That is, $K_{S^\nu} \cong (\pi^\nu)^*K^n(-p_+-p_-)$.
\end{proof}

\begin{proposition}\label{prophitsurj}
For any $b \in \mathcal{D}^0$, the differential $h_*$ of the Hitchin map is surjective at all points of $\overline{Prym}_{d-n(g-1)}^{\text{loc free}}(S,\Sigma)$.
\end{proposition}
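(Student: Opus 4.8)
The plan is to dualise the problem and reduce it to an injectivity statement that can be checked on the normalisation $S^\nu$. First I would observe that, since $b \in \mathcal{D}^0$, the spectral curve $S$ is irreducible, and therefore every Higgs bundle in $h^{-1}(b)$ is stable: a proper $\Phi$-invariant subbundle would split off a proper component of $S$. In particular each locally free $M \in \overline{Prym}_{d-n(g-1)}^{\text{loc free}}(S,\Sigma)$ is a smooth point of $\mathcal{M}_{n,d}$, so $\Omega_I$ is nondegenerate at $M$ and the transpose $h^* : \mathcal{A}^* = T_b^*\mathcal{A} \to T_M^* \mathcal{M}_{n,d}$ is defined. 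Surjectivity of $h_*$ at $M$ is equivalent to injectivity of $h^*$, and the musical isomorphism induced by $\Omega_I$ identifies $h^*$ with the Hamiltonian assignment $\mu \mapsto \gamma_b(\mu)$ of \textsection\ref{sechamflow1}, since the relation $i_{\gamma_b(\mu)}\Omega_I = h^*\mu$ holds for every $\mu \in \mathcal{A}^*$ (each such $\mu$ is $df(b)$ for a linear $f$, and the derivation of formula \eqref{ham1} via the pushforward $E = \pi_* M$ goes through unchanged for locally free $M$ on the singular curve). Thus it suffices to prove that $\gamma_b : \mathcal{A}^* \to H^1(S,\mathcal{O}_S)$ is injective.

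To prove this I would repeat the pairing argument of \textsection\ref{sechamflow1}, but now using Serre duality on the Gorenstein curve $S$ rather than on a smooth spectral curve, so that $H^1(S,\mathcal{O}_S)^* \cong H^0(S,\omega_S)$, where $\omega_S$ is the dualising sheaf. On the normalisation, $\omega_S$ is realised as those meromorphic sections of $K_{S^\nu}(p_+ + p_-)$ whose residues at $p_+$ and $p_-$ sum to zero; by Lemma \ref{lemcanon} we have $K_{S^\nu}(p_+ + p_-) \cong (\pi^\nu)^* K^n$, which is exactly the bundle in which the candidate test sections $\psi_b(\nu_j) = (\pi^\nu)^*(\nu_j)(\lambda^{n-j} + a_2 \lambda^{n-j-2} + \dots + a_{n-j})$ naturally live. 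The strategy is then to show that the pairing matrix $\int_S \gamma_b(\mu_m)\psi_b(\nu_j)$ is, as in the nonsingular case, block triangular with nondegenerate diagonal blocks $\int_\Sigma \mu_j \nu_j$. Since the Serre pairing on $S$ is perfect and the total residue condition cuts $H^0(S,\omega_S)$ down to a space whose Prym part has exactly the expected dimension $\dim \mathcal{A}$, this nondegeneracy forces $\gamma_b$ to be injective, and hence $h_*$ to be surjective.

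The main obstacle is the behaviour at the node, which enters in two places. First, one must verify that each $\psi_b(\nu_j)$, a priori only a section of $(\pi^\nu)^* K^n$, satisfies the residue condition $\mathrm{Res}_{p_+}\psi_b(\nu_j) + \mathrm{Res}_{p_-}\psi_b(\nu_j) = 0$ and hence descends to a genuine section of $\omega_S$; here one should exploit that the two branches of $S$ meet at a single point $p \in K$, so that $\lambda(p_+) = \lambda(p_-)$ and the polynomial factor takes equal values at $p_\pm$, while the local factors coming from $1/d\pi$ carry opposite signs, making the two residues cancel. Second, the contour-integral evaluation of Lemma \ref{lemint} must be re-run on $S^\nu$ over a neighbourhood of $u = \pi(p)$, where two sheets of the covering collide; one must check that this collision, which occurs on a set of measure zero, does not alter the value of the integrals, so that the pairing still reproduces the perfect Serre pairing $\sum_j \int_\Sigma \mu_j \nu_j$ on $\Sigma$. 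Carrying out this residue and contour bookkeeping at $p_\pm$ is the crux of the argument; once it is in place the pairing is nondegenerate, $\gamma_b$ is injective, and therefore $h_*$ is surjective at every point of $\overline{Prym}_{d-n(g-1)}^{\text{loc free}}(S,\Sigma)$.
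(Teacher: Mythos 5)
Your proposal is correct, and its skeleton is the same as the paper's: both reduce surjectivity of $h_*$ at a locally free point to the assertion that no non-zero $\mu \in \mathcal{A}^*$ produces a gauge-trivial vertical deformation, both realise that deformation on the normalisation as $\tilde{\gamma}_b(\mu)$, and both pair it against the same test sections $\tilde{\psi}_b \in H^0\bigl(S^\nu , (\pi^\nu)^* K^n\bigr)$, using Lemma \ref{lemint} (valid here because the normalisation map is an isomorphism off a set of measure zero) to see that the pairing reproduces $\langle \mu , \cdot \, \rangle$. The genuine difference is the finishing move. You cite Rosenlicht--Serre duality for the Gorenstein curve $S$, $H^1(S,\mathcal{O}_S)^* \cong H^0(S,\omega_S)$, to conclude that a class pairing non-trivially against sections of $\omega_S$ is non-zero in $H^1(S,\mathcal{O}_S)$. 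The paper never invokes this duality: it proves the one fragment it needs by hand, namely that gauge-trivial deformations pair to zero. Concretely, vanishing of $X_f$ at the point means $\tilde{\gamma}_b(df) = \overline{\partial} g$ with $g(p_+) = g(p_-)$, and Stokes' theorem together with the fact that the residues of a meromorphic $1$-form on $S^\nu$ sum to zero give
\begin{equation*}
\int_{S^\nu} (\overline{\partial} g)\, \tilde{\psi}_b = 2\pi i \bigl( g(p_+) r_+ + g(p_-) r_- \bigr) = 2\pi i\, g(p_+) ( r_+ + r_- ) = 0,
\end{equation*}
which is precisely the statement that the integration pairing descends to $H^1(S,\mathcal{O}_S) \times H^0(S,\omega_S)$. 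So your route buys brevity by outsourcing the descent to a standard theorem, at the cost of having to know that the abstract duality pairing is computed by this integral --- which is exactly what the paper's computation establishes; the paper's route is self-contained. Two smaller corrections to your ``obstacles'': the residue condition on $\psi_b(\nu_j)$ is automatic, since by adjunction on the nodal curve $\omega_S \cong \pi^*K^n|_S$ (the total space of $K$ has trivial canonical bundle) and $\psi_b(\nu_j)$ is a polynomial in $\lambda$ with coefficients pulled back from $\Sigma$, hence a genuine section of $\pi^*K^n$ over $S$ itself and therefore already a Rosenlicht differential --- your proposed cancellation mechanism at $p_\pm$ is just the local picture of this; and you only need injectivity of $\gamma_b$, which the non-degenerate pairing already gives, so the dimension count of the ``Prym part'' of $H^0(S,\omega_S)$ is superfluous.
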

\begin{proof}
First note that $\overline{Prym}_{d-n(g-1)}(S,\Sigma) \subset \mathcal{M}_{n,d}^{{\rm sm}}$. Indeed it can be shown that every point in $\overline{Prym}_{d-n(g-1)}(S,\Sigma)$ gives rise to a stable Higgs bundle \cite[Remark 1.5]{kp}. Any $f \in \mathcal{A}^*$ may be thought of as a linear function on $\mathcal{A}$, hence has an associated Hamiltonian vector field $X_f$. To show that $h_*$ is surjective on $\overline{Prym}_{d-n(g-1)}^{\text{loc free}}(S,\Sigma)$ it is clearly sufficient to show that for any non-zero $f \in \mathcal{A}^*$, $X_f$ is non-vanishing on $\overline{Prym}_{d-n(g-1)}^{\text{loc free}}(S,\Sigma)$. To show this, we need to examine how the Hamiltonian flow of $X_f$ acts on pairs $(\overline{\partial}_L , \phi)$. Given such a pair $(\overline{\partial}_L , \phi )$, let $F = (\pi^\nu)_* L$ and let $\Phi' : F \to F \otimes K$ be the endomorphism obtained by pushing forward $\nu^*(\lambda)$. The Higgs bundle $(E,\Phi)$ associated to $(\overline{\partial}_L , \phi)$ is related to $(F , \Phi')$ through a Hecke modification as follows: the isomorphism $\phi : L_{p_+} \to L_{p_-}$ defines a codimension $1$ subspace $F_\phi \subset F_u$ of the fibre of $F$ at $u$. Then $\mathcal{O}(E)$ is defined as the subsheaf of $\mathcal{O}(F)$ consisting of sections $s$ for which $s(u) \in F_{\phi}$ (whenever $u$ is in the domain of $s$). By construction $\Phi'$ preserves the subspace $V_\phi$ and the Higgs field $\Phi$ is simply the restriction of $\Phi'$ to $\mathcal{O}(E)$.\\

Writing $df = \sum_{j=2}^n f_j$, for $f_j \in \Omega^{0,1}(\Sigma , K^{1-j})$, we have by Remark \ref{remhamflow} that the flow of $X_f$ on $(\overline{\partial}_E , \Phi)$ is given by 
\begin{equation*}
e^{tX_f}(\overline{\partial}_E , \Phi ) = \left(\overline{\partial}_E + t\sum_{j=2}^n f_j \left( \Phi^{j-1} - \frac{(j-1)b_{j-1}}{n} Id\right) , \Phi \right).
\end{equation*}
At the level of pairs $(\overline{\partial}_L , \phi )$ it follows that the flow is given by
\begin{equation*}
\left( \overline{\partial}_L , \phi \right) \mapsto \left( \overline{\partial}_L + t \sum_{j=2}^n (\pi^\nu)^*(f_j) \left( \nu^*(\lambda^{j-1}) - \frac{(j-1)(\pi^\nu)^* (b_{j-1})}{n} \right) , \phi \right).
\end{equation*}
Now suppose that $f \in \mathcal{A}^*$ is non-zero and that $X_f$ vanishes at $(\overline{\partial}_L , \phi)$. This is equivalent to requiring that
\begin{equation}\label{equfis}
\sum_{j=2}^n (\pi^\nu)^*(f_j) \left( \nu^*(\lambda^{j-1}) - \frac{(j-1)(\pi^\nu)^*(b_{j-1})}{n} \right) = \overline{\partial} g,
\end{equation}
for some $g : S^\nu \to \mathbb{C}$ satisfying $g(p_+) = g(p_-)$. For any $\mu_j \in \Omega^{0,1}(\Sigma , K^{1-j})$ let us define
\begin{equation*}
\tilde{\gamma}_b \left( \sum_{j=2}^n \mu_j \right) = \sum_{j=2}^n (\pi^\nu)^*(\mu_j) \left( \nu^*(\lambda^{j-1}) - \frac{(j-1)(\pi^\nu)^*b_{j-1}}{n} \right) \in \Omega^{0,1}(S^\nu).
\end{equation*}
Furthermore, let us define $\tilde{\psi}_b : \mathcal{A} \to H^0(S^\nu , (\pi^\nu)^*(K^n) )$ by
\begin{equation*}
\tilde{\psi}_b( \nu_j ) = (\pi^\nu)^*(\nu_j)( \nu^*\lambda^{n-j} + a_2 \nu^*\lambda^{n-j-2} + \dots + a_{n-j} ).
\end{equation*}
By Lemma \ref{lemcanon}, $(\pi^\nu)^*K^n \cong K_{S^\nu}(p_+ + p_-)$. Thus we may interpret $H^0(S^\nu , (\pi^\nu)^*(K^n) )$ as the space of meromorphic sections of $K_{S^\nu}$ which have at worst first order poles at $p_+,p_-$. For any $\mu_j \in \Omega^{0,1}(\Sigma , K^{1-j})$ and $\nu_m \in H^0( \Sigma , K^m)$ we have that $\tilde{\gamma}_b(\mu_j) \tilde{\psi}_b(\nu_m)$ can be viewed as a $(1,1)$-form on $S^\nu$ away from the poles of $\tilde{\psi}_b(\nu_m)$. Now as the poles of $\tilde{\psi}_b(\nu_m)$ are first order it is easy to see that $\tilde{\gamma}_b(\mu_j) \tilde{\psi}_b(\nu_m)$ is integrable on $S^\nu$. Moreover, since $S^\nu$ coincides with $S$ away from a set of measure zero we find that
\begin{equation*}
\int_{S^\nu} \tilde{\gamma}_b(\mu_j) \tilde{\psi}_b(\nu_m) = \int_S \gamma_b(\mu_j) \psi_b(\nu_m) = \langle \mu_j , \nu_m \rangle,
\end{equation*}
where as usual, $\langle \, \, , \, \, \rangle$ is the pairing of $\mathcal{A}^*$ and $\mathcal{A}$. From Equation (\ref{equfis}) we have that $\tilde{\gamma}_b(df) = \overline{\partial} g$, where $g$ is a function on $S^\nu$ such that $g(p_+) = g(p_-)$. Thus, for any $\nu_m \in H^0( \Sigma , K^m)$ we have that:
\begin{equation*}
\langle df , \nu_m \rangle = \int_{S^\nu} (\overline{\partial} g) \tilde{\psi}_b(\nu_m).
\end{equation*}
If $\tilde{\psi}_b(\nu_m)$ has no poles then this expression vanishes. More generally, suppose that $\tilde{\psi}_b(\nu_m)$ has first order poles at $p_+,p_-$ with residues $r_+,r_-$. We have $r_+ + r_- = 0$, since the residues of a meromorphic $1$-form on $S^\nu$ must sum to zero. Choose local coordinates $z_+,z_-$ centered at $p_+,p_-$ and let $D_+(\epsilon),D_-(\epsilon)$ be the corresponding discs of radius $\epsilon$ around $p_+,p_-$. Then
\begin{equation*}
\begin{aligned}
\int_{S^\nu} (\overline{\partial} g) \tilde{\psi}_b(\nu_m) &= \lim_{\epsilon \to 0} \int_{S^\nu - D_+(\epsilon) - D_-(\epsilon)} (\overline{\partial} g) \tilde{\psi}_b(\nu_m) \\
&= \lim_{\epsilon \to 0} \int_{S^\nu - D_+(\epsilon) - D_-(\epsilon)} d \left( g \tilde{\psi}_b(\nu_m) \right) \\
&= \lim_{\epsilon \to 0} \left( \int_{\partial D_+(\epsilon)} g \tilde{\psi}_b(\nu_m) + \int_{\partial D_-(\epsilon)} g \tilde{\psi}_b(\nu_m) \right) \\
&= 2 \pi i ( g(p_+)r_+ + g(p_-)r_-).
\end{aligned}
\end{equation*}
But $g(p_+) = g(p_-)$, so this is $2 \pi i g(p_+)( r_+ + r_- ) = 0$. We have shown that $\langle df , \nu_m \rangle = 0$ for all $\nu_m$, hence $df = 0$. But $f$ is a linear function so this means $f = 0$. But we assumed that $f$ is non-zero, hence $X_f$ must be non-vanishing.
\end{proof}

\begin{corollary}\label{corcodim}
The set of points of $\mathcal{M}_{n,d}^{{\rm sm}}$ where the differential of the Hitchin map is not surjective has codimension $\ge 2$.
\end{corollary}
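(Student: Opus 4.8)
The plan is to control the ``bad'' locus $B = \{ m \in \mathcal{M}_{n,d}^{\rm sm} \mid h_* \text{ is not surjective at } m \}$ by stratifying $\mathcal{M}_{n,d}^{\rm sm}$ according to the smoothness type of the associated spectral curve, i.e.\ by the position of the base point $b = h(m)$ relative to the discriminant divisor $\mathcal{D} \subset \mathcal{A}$. Over the regular locus $\mathcal{A}^{\rm reg} = \mathcal{A} - \mathcal{D}$ the Proposition describing the Hamiltonian flows (together with the surjectivity of $\psi_b$, which is an isomorphism onto $H^0(S,K_S)$) shows that the vertical tangent map $\gamma_b$ is an isomorphism, so $h_*$ is already surjective everywhere on $\mathcal{M}_{n,d}^{\rm reg}$; hence $B$ is entirely contained in $h^{-1}(\mathcal{D})$. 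So I would first record $B \subseteq h^{-1}(\mathcal{D})$.

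Next I would split $h^{-1}(\mathcal{D})$ using the decomposition $\mathcal{D} = \mathcal{D}^0 \cup (\mathcal{D} - \mathcal{D}^0)$ introduced in \textsection\ref{sechamflow2}. Over $\mathcal{D}^0$, Proposition \ref{prophitsurj} gives that $h_*$ is surjective at every point of the dense open subset $\overline{Prym}_{d-n(g-1)}^{\text{loc free}}(S,\Sigma)$ of the fibre. Thus, within $h^{-1}(\mathcal{D}^0)$, the bad locus meets each fibre only in the complement of the locally free locus, which is a proper closed subset of the fibre and so has positive codimension there; combined with the fibre dimension this keeps the contribution of $h^{-1}(\mathcal{D}^0)$ to $B$ of codimension $\ge 2$ in $\mathcal{M}_{n,d}^{\rm sm}$. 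I would make this dimension count explicit: since $\dim \mathcal{M}_{n,d}^{\rm sm} = 2m$ with $m = \dim \mathcal{A}$, and the fibres are $m$-dimensional, any locus that is fibred over a codimension-$k$ subset of the base, meeting each relevant fibre in codimension $\ge j$, has codimension $\ge k + j$ in the total space.

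The remaining piece is $h^{-1}(\mathcal{D} - \mathcal{D}^0)$. Here I invoke the irreducibility of $\mathcal{D}$ \cite[Corollary 1.5]{kp}, which forces $\mathcal{D} - \mathcal{D}^0$ to have codimension $\ge 1$ in $\mathcal{D}$, hence codimension $\ge 2$ in $\mathcal{A}$. Since $h$ has $m$-dimensional fibres and base dimension $m$, pulling back a codimension-$\ge 2$ subset of $\mathcal{A}$ yields a subset of $\mathcal{M}_{n,d}^{\rm sm}$ of codimension $\ge 2$ (using properness/equidimensionality of $h$ on the relevant locus so that fibre dimensions do not jump). Putting the three contributions together, every part of $B$ has codimension $\ge 2$, which is the assertion.

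The step I expect to be the main obstacle is the dimension bookkeeping for $h^{-1}(\mathcal{D}^0)$: one must verify that the non-locally-free locus inside a generic singular Prym fibre really does drop dimension by at least one (so that together with the single codimension coming from $\mathcal{D}^0 \subset \mathcal{A}$ one gains codimension $\ge 2$ overall), and one must ensure the fibration-dimension estimates are uniform enough that no stratum of $B$ sneaks in with codimension exactly $1$. This requires knowing that the locally free locus $\overline{Prym}^{\text{loc free}}$ is genuinely dense with complement of positive codimension in the fibre, and that $h$ does not have anomalous fibre-dimension jumps over $\mathcal{D}^0$; both are plausible from the torsion-free-sheaf description of the generalised Jacobian but warrant care.
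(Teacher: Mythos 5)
Your proposal is correct and is essentially the argument the paper intends: the corollary is stated without an explicit proof precisely because it follows from the stratification $\mathcal{A} = \mathcal{A}^{\rm reg} \cup \mathcal{D}^0 \cup (\mathcal{D} - \mathcal{D}^0)$, with $h_*$ surjective over $\mathcal{A}^{\rm reg}$ (where $h$ is a smooth fibre bundle), surjective on the dense open locally free locus of each fibre over $\mathcal{D}^0$ by Proposition \ref{prophitsurj}, and with $h^{-1}(\mathcal{D}-\mathcal{D}^0)$ of codimension $\ge 2$ because $\mathcal{D} - \mathcal{D}^0$ has codimension $\ge 2$ in $\mathcal{A}$ by irreducibility of $\mathcal{D}$. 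Your dimension bookkeeping (codimension of the base stratum plus codimension in the fibre) is exactly the implicit count, so no further comparison is needed.
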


\section{Holomorphic vector fields on $\mathcal{M}_{n,d}^{{\rm sm}}$}\label{secholovf}


\subsection{Kodaira-Spencer maps}\label{secksm}

\begin{lemma}\label{lemmahyper}
Let $S \to \Sigma$ be a non-singular spectral curve, where $\Sigma$ has genus $g > 1$. Then $S$ is not hyperelliptic.
\end{lemma}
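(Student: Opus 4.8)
The plan is to study the canonical linear system of $S$ directly, exploiting the fact that, by adjunction, $K_S\cong\pi^*K^n$ (here $n\ge 2$ and $\pi\colon S\to\Sigma$ has degree $n$). The starting observation is that pullback gives an inclusion $\pi^*\colon H^0(\Sigma,K^n)\hookrightarrow H^0(S,K_S)$, so that $|K_S|$ contains the subsystem $\pi^*|K^n|$, whose associated map is the composite $\phi_{K^n}\circ\pi$, where $\phi_{K^n}\colon\Sigma\to\mathbb{P}(H^0(\Sigma,K^n)^*)$ is the map defined by $|K^n|$. Assume for contradiction that $S$ is hyperelliptic, with hyperelliptic involution $\iota$. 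Classically $\iota$ acts as $-1$ on $H^0(S,K_S)$ and the canonical map satisfies $\phi_{K_S}\circ\iota=\phi_{K_S}$; restricting to the subsystem $\pi^*|K^n|$ yields $\phi_{K^n}\circ\pi\circ\iota=\phi_{K^n}\circ\pi$.

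I would then split into two cases according to whether $\Sigma$ is hyperelliptic. If $\Sigma$ is \emph{not} hyperelliptic, then $K$, hence $K^n$, is very ample, so $\phi_{K^n}$ is injective and the relation above forces $\pi\circ\iota=\pi$. But then $\pi$ is constant on the $\iota$-orbits and descends to a non-constant holomorphic map $S/\iota\cong\mathbb{P}^1\to\Sigma$, which is impossible since $\Sigma$ has genus $g>1$.

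If $\Sigma$ \emph{is} hyperelliptic, with involution $\tilde\iota$ and hyperelliptic quotient $h\colon\Sigma\to\mathbb{P}^1$, then $K\cong h^*\mathcal{O}(g-1)$, so $\phi_{K^n}$ factors as $h$ followed by the embedding of a rational normal curve. The relation $\phi_{K^n}\pi\iota=\phi_{K^n}\pi$ becomes $h\pi\iota=h\pi$, so pointwise $\pi\iota\in\{\pi,\tilde\iota\pi\}$; since the loci where $\pi\iota$ agrees with $\pi$ and with $\tilde\iota\pi$ are closed and cover the irreducible curve $S$, either $\pi\iota=\pi$ everywhere (a contradiction exactly as above) or $\pi\iota=\tilde\iota\pi$ everywhere, i.e.\ $\iota$ covers $\tilde\iota$. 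In the latter case I would transport the identity $\iota^*=-1$ on $H^0(S,K_S)$ through the isomorphism $K_S\cong\pi^*K^n$: for each $\omega\in H^0(\Sigma,K^n)$ this should give $\pi^*(\tilde\iota^*\omega)=\iota^*(\pi^*\omega)=-\pi^*\omega$, and since $\pi^*$ is injective, $\tilde\iota^*=-1$ on all of $H^0(\Sigma,K^n)$. This is absurd, because $H^0(\Sigma,K^n)$ always contains the nonzero $\tilde\iota$-invariant subspace $h^*H^0(\mathbb{P}^1,\mathcal{O}(n(g-1)))$.

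The one genuinely delicate point, and the step I expect to require the most care, is the equivariance used in the last case: that the adjunction isomorphism $K_S\cong\pi^*K^n$ intertwines the action of $\iota$ on $1$-forms with the action of $\tilde\iota$ on $n$-differentials, once $\pi\iota=\tilde\iota\pi$ is known. This should follow from the naturality of the derivative $d\pi$ (which realises the isomorphism, cf.\ the formula for $d\pi$ in the text), since $\pi\iota=\tilde\iota\pi$ gives $d\pi\circ d\iota=d\tilde\iota\circ d\pi$; but verifying the compatibility of the bundle identifications carefully is where the real work lies. Note that the entire subtlety is confined to the case of hyperelliptic $\Sigma$: for non-hyperelliptic $\Sigma$ the very ampleness of $K$ makes the argument immediate.
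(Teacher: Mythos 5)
Your non-hyperelliptic case is correct, and your overall route --- the projective geometry of the canonical system of $S$ and very ampleness of $K^n$, rather than the paper's device of lifting the involution to the line bundle $\pi^*K$ and analysing its action on the tautological section $\lambda$ and on $H^0(S,K_S^2)$ --- is genuinely different from the paper's proof. But the hyperelliptic case rests on a false claim, and the equivariance you flag as ``delicate'' is a genuine gap, not a technicality. The false claim: if $\Sigma$ is hyperelliptic, $\phi_{K^n}$ does \emph{not} factor through $h$ in general. The line bundle $K^n$ is a pullback, but the \emph{complete} linear system $|K^n|$ is strictly larger than $h^*|\mathcal{O}_{\mathbb{P}^1}(n(g-1))|$ unless $h^0(\Sigma,K^n)=(2n-1)(g-1)$ equals $h^0(\mathbb{P}^1,\mathcal{O}(n(g-1)))=n(g-1)+1$, i.e.\ unless $(n-1)(g-1)=1$, i.e.\ $(n,g)=(2,2)$. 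In fact $\deg K^n=n(2g-2)\ge 2g+1$ exactly when $(n-1)(g-1)\ge 2$, so for every $(n,g)\neq(2,2)$ the bundle $K^n$ is very ample on \emph{any} curve of genus $\ge 2$, hyperelliptic or not. This error is self-repairing and simplifies your proof: the correct dichotomy is $(n,g)\neq(2,2)$ versus $(n,g)=(2,2)$, and in the first case your first argument ($\phi_{K^n}$ injective, hence $\pi\circ\iota=\pi$, hence a non-constant holomorphic map $\mathbb{P}^1\cong S/\iota\to\Sigma$, absurd) applies verbatim, with no case division at all. In particular it settles the lemma for all $g\ge 3$, which is the only range in which the paper ever uses it.

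The residual case $(n,g)=(2,2)$ is where the real trouble lies, and there the gap cannot be closed. The equivariance $\iota^*\circ\pi^\sharp=\pi^\sharp\circ\tilde\iota^*$ (writing $\pi^\sharp:H^0(\Sigma,K^n)\hookrightarrow H^0(S,K_S)$ for your inclusion) does \emph{not} follow from $\pi\iota=\tilde\iota\pi$ together with naturality: an involution of $S$ covering $\tilde\iota$ is only determined up to composition with the spectral involution $\nu:\lambda\mapsto-\lambda$, and $\pi^\sharp H^0(\Sigma,K^2)$ is precisely the $(-1)$-eigenspace of $\nu^*$ on $H^0(S,K_S)$ (the Prym part), so replacing $\iota$ by $\nu\circ\iota$ flips the sign: one only gets $\iota^*\circ\pi^\sharp=\pm\,\pi^\sharp\circ\tilde\iota^*$. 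With the minus sign your contradiction evaporates --- the argument then yields $\tilde\iota^*=+1$ on $H^0(\Sigma,K^2)$, which is true. And no argument can do better, because the lemma is actually \emph{false} for $(n,g)=(2,2)$: write $\Sigma$ as $y^2=p(x)$ with $\deg p=6$; since $g=2$, every $a_2\in H^0(\Sigma,K^2)$ has the form $q(x)\,(dx/y)^{\otimes 2}$ with $\deg q\le 2$, and smoothness of $S_b$ forces $q$ to have two distinct roots, distinct from those of $p$. Setting $\lambda=s\,dx/y$, the spectral curve is $S=\{\,y^2=p(x),\ s^2=-q(x)\,\}$, of genus $5$, and $(x,y,s)\mapsto(x,-y,s)$ is a holomorphic involution whose quotient is the rational curve $\{s^2=-q(x)\}$. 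So every smooth spectral curve with $n=2$, $g=2$ \emph{is} hyperelliptic, and its hyperelliptic involution is exactly the ``wrong'' lift $\nu$ composed with the restriction of the cotangent lift of $\tilde\iota$.

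For comparison, the paper's own proof glosses over the same subtlety: its key assertion $\sigma^*\bigl(\lambda^{(2n-1)j}\pi^*\alpha\bigr)=\lambda^{(2n-1)j}\pi^*\alpha$ is only justified for $j\le n$ (where $\lambda^{n-j}\pi^*\alpha$ lies in $H^0(S,K_S)$), not for the values $n<j\le 2n$ that the final contradiction requires, and indeed its conclusion fails at $(n,g)=(2,2)$ as above. Since all the paper's theorems assume $g\ge 3$, nothing downstream is affected; and for $g\ge 3$ your repaired very-ampleness argument is a complete, and arguably cleaner, proof.
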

\begin{proof}
Suppose on the contrary, that $S$ is hyperelliptic. Let $\sigma : S \to S$ be the hyperelliptic involution. Then $\sigma$ acts on $H^0(S , K_S ) \cong H^0(\Sigma , \pi^* K^n)$ as multiplication by $-1$. As usual, we let $\lambda$ denote the tautological section of $\pi^* K$. Then $s = \sigma^*(\lambda)$ is a section of $\sigma^*( \pi^* K)$. It follows that $s^n = \sigma^* (\lambda^n ) = -\lambda^n$. Thus $s^n$ and $\lambda^n$ have the same divisor. Of course this means $s$ and $\lambda$ also have the same divisor and $\sigma^* \pi^* K \cong \pi^* K$. It follows that we can lift $\sigma$ to an isomorphism $\hat{\sigma} : \pi^* K \to \pi^* K$ covering $\sigma$ and for which $\hat{\sigma}^{\otimes n} = \sigma^*$ is the pullback by $\sigma$ on $\pi^* K^n \cong K_S$. But as $\sigma$ is an involution, we have $\sigma^* \circ \sigma^* = 1$ and hence $\hat{\sigma} \circ \hat{\sigma} = c \, Id$ for some constant $c$ satisfying $c^n = 1$.\\

From the identity $\pi_* \mathcal{O}_S = \mathcal{O}_\Sigma \oplus \mathcal{O}(K^{-1}) \oplus \dots \oplus \mathcal{O}(K^{-n+1})$ \cite{bnr}, we see that $H^0(S , \pi^* K) \cong H^0(\Sigma , \mathcal{O}) \oplus H^0( \Sigma , K) \cong \mathbb{C} \oplus H^0( \Sigma , K)$. Thus $\hat{\sigma}(\lambda) = a\lambda + \pi^*(b)$, where $a \in \mathbb{C}$ and $b \in H^0(\Sigma , K)$. But now we have $-\lambda^n = \sigma^*(\lambda^n) = (a \lambda + b)^n$ and so $\lambda^n + (a \lambda + b )^n = 0$. Note that $\lambda^n + (a \lambda + b )^n$ factors as a polynomial in $\lambda$. This would contradict irreducibility of $S$ unless the coefficients of this polynomial all vanish, so $a^n = -1$ and $b = 0$. Therefore $\hat{\sigma}(\lambda) = a\lambda$ and since $\hat{\sigma} \circ \hat{\sigma} = c \, Id$, we need $a^2 = c$. Note in particular that $a^{2n} = 1$.\\

Now let $\alpha \in H^0( \Sigma , K^j)$. Then $\lambda^{(2n-1)j} \pi^* \alpha \in H^0( S , \pi^* K^{2nj} ) \cong H^0( S , K_S^{2j})$. Therefore $\sigma^*( \lambda^{(2n-1)j} \pi^* \alpha ) = \lambda^{(2n-1)j} \pi^* \alpha$ and so $a^{(2n-1)j} \hat{\sigma}^{\otimes j}( \pi^* \alpha ) = \alpha$. Hence using $a^{2n} = 1$, we have $\hat{\sigma}^{\otimes j}( \pi^* \alpha ) = a^j \pi^* \alpha$. Next, using $\mathcal{O}_S = \mathcal{O}_\Sigma \oplus \mathcal{O}(K^{-1}) \oplus \dots \oplus \mathcal{O}(K^{-n+1})$, we have $H^0(S , K_S^2 ) \cong H^0( \Sigma , K^{2n} ) \oplus \dots \oplus H^0( \Sigma , K^{n+1})$ and any $\omega \in H^0( S , K_S^2)$ can be written as $\omega = \pi^* \omega_{2n} + \dots + \pi^* \omega_{n+1}\lambda^{n-1}$, where $\omega_j \in H^0( \Sigma , K^j)$. It follows that $\sigma^*( \omega ) = a^{2n} \omega = \omega$, so that $\sigma$ acts as the identity on $H^0( S , K_S^2)$. However this never happens for a hyperelliptic curve of genus $> 2$. On the other hand the genus of $S$ satisfies $g_S = 1 + n^2(g-1) > 2$, so this is a contradiction.
\end{proof}

Recall that $\mathcal{M}_{n,d}$ admits an action of $\mathbb{C}^*$ as follows: for any $\lambda \in \mathbb{C}^*$ we let $m_\lambda : \mathcal{M}_{n,d} \to \mathcal{M}_{n,d}$ be defined as $m_\lambda( E , \Phi ) = (E , \lambda \Phi )$. There is a unique $\mathbb{C}^*$-action $m^\mathcal{A}_\lambda$ on $\mathcal{A}$ compatible with the $\mathbb{C}^*$-action on $\mathcal{M}_{n,d}$ in the sense that $h \circ m_\lambda = m^\mathcal{A}_\lambda \circ h$. Note that this $\mathbb{C}^*$-action preserves $\mathcal{A}^{\rm reg}$. Under the decomposition $\mathcal{A} = \bigoplus_{j=2}^n H^0(\Sigma , K^j)$ we have that the subspace $H^0(\Sigma , K^j)$ has weight $j$ with respect to this action. Let $\xi = \left. \tfrac{d}{dt} \right|_{t=0} m_{e^t}$ ($t \in \mathbb{R}$) be the holomorphic vector field on $\mathcal{M}^{\rm sm}_{n,d}$ tangent to the action of $\mathbb{R}_{+} \subset \mathbb{C}^*$ and similarly let $\xi^\mathcal{A} = \left. \tfrac{d}{dt}\right|_{t=0} m^\mathcal{A}_{e^t}$. It follows that $h_* \xi_m = \xi^\mathcal{A}_{h(m)}$ for all $m \in \mathcal{M}_{n,d}^{\rm sm}$.\\

Recall that to any point $b \in \mathcal{A}$ we associate a section $s_b$ of $\pi^*(K^n)$ on the total space of $K$ and that the corresponding spectral curve $S_b$ is the zero locus of $s_b$. We can similarly construct the universal family $S^{\rm reg}_{\rm univ}$ of regular spectral curves:
\begin{equation*}
S^{\rm reg}_{\rm univ} = \{ (b,u) \in \mathcal{A}^{\rm reg} \times K | s_b(u) = 0 \}.
\end{equation*}
Let $q : S^{\rm reg}_{\rm univ} \to \mathcal{A}^{\rm reg}$ be the projection $(b,u) \mapsto b$. It is clear that $S^{\rm reg}_{\rm univ}$ is smooth and that the fibre of $q$ over $b$ is precisely the spectral curve $S_b$. Thus for any $b \in \mathcal{A}^{\rm reg}$ we have a Kodaira-Spencer map $\rho_b : \mathcal{A} \cong T_b \mathcal{A}^{\rm reg} \to H^1( S_b , TS_b)$.

\begin{lemma}\label{lemksker1}
For any $b \in \mathcal{A}^{\rm reg}$, the kernel of the Kodaira-Spencer map $\rho_b : \mathcal{A} \to H^1( S_b , TS_b )$ is spanned by $\xi^\mathcal{A}_b$.
\end{lemma}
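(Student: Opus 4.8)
The plan is to compute the Kodaira--Spencer map through the geometry of the spectral curves as divisors in the total space of $K$. Write $Y$ for the total space of $K$, with projection $\pi : Y \to \Sigma$ and tautological section $\lambda$; since $Y$ is the total space of $K$ over $\Sigma$ its canonical bundle $K_Y = \pi^*(K \otimes K^{-1}) = \mathcal{O}_Y$ is trivial, so adjunction gives $N_{S/Y} \cong \pi^*(K^n)|_S \cong K_S$. As $S = S_b$ is the zero locus of $s_b \in H^0(Y, \pi^* K^n)$, the Kodaira--Spencer map of the family $q$ factors as $\rho_b = \delta \circ \mu_b$, where $\mu_b : \mathcal{A} \to H^0(S, N_{S/Y})$ sends a deformation to the restriction to $S$ of the linearization of $s_b$, and $\delta : H^0(S, N_{S/Y}) \to H^1(S, TS)$ is the connecting map of $0 \to TS \to TY|_S \to N_{S/Y} \to 0$. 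Working in the coordinates $a = \alpha(b)$ is legitimate since $\alpha$ is a $\mathbb{C}^*$-equivariant automorphism of $\mathcal{A}$, and one computes $\mu_b(\dot a) = \bigl( \sum_{j=2}^n \pi^*(\dot a_j)\lambda^{n-j} \bigr)|_S$.

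First I would show $\mu_b$ is injective. Using $\pi_* \mathcal{O}_S = \bigoplus_{i=0}^{n-1} \mathcal{O}(K^{-i})$, the space $H^0(S, \pi^* K^n)$ decomposes as $\bigoplus_{i=0}^{n-1} H^0(\Sigma, K^{n-i})$, the $i$-th summand being the coefficient of $\lambda^i$. The powers $\lambda^{n-j}$ ($2 \le j \le n$) occurring in $\mu_b$ are exactly $\lambda^0, \dots, \lambda^{n-2}$, and on each graded piece $\mu_b$ is the identity embedding of the summand $H^0(\Sigma, K^j)$; hence $\mu_b$ is injective, and its image is precisely the subspace of sections whose $\lambda^{n-1}$-coefficient vanishes.

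The heart of the argument is to identify $\ker \delta$. Because $S$ has genus $g_S = 1 + n^2(g-1) \ge 2$, we have $H^0(S, TS) = 0$, so $\ker \delta = \mathrm{im}(r)$ for the injective map $r : H^0(S, TY|_S) \to H^0(S, N_{S/Y})$. I would compute $H^0(S, TY|_S)$ from the vertical sequence $0 \to \pi^* K|_S \to TY|_S \to \pi^* K^{-1}|_S \to 0$: since $H^0(S, \pi^* K^{-1}) = 0$ and $H^0(S, \pi^* K) = H^0(\Sigma, K) \oplus \mathbb{C}$, every global section of $TY|_S$ is vertical, spanned by the fibrewise translations $\pi^*(\beta)$ for $\beta \in H^0(\Sigma, K)$ together with the Euler field $\mathcal{E} = \lambda\,\partial_\lambda$ generating the $\mathbb{C}^*$-action. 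The projection to $N_{S/Y} \cong \pi^* K^n|_S$ is $V \mapsto (V s_b)|_S$, and one finds $r(\mathcal{E}) = -\sum_{j=2}^n j\, a_j \lambda^{n-j}$ (after using $\lambda^n = -\sum_j a_j \lambda^{n-j}$ on $S$) and $r(\pi^* \beta) = \pi^*(\beta)\, d\pi$, where $d\pi = n\lambda^{n-1} + \dots$ is the section of Equation~(\ref{dpi}).

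Finally I would intersect the two images. The crucial observation is that $r(\mathcal{E})$ involves only $\lambda^0, \dots, \lambda^{n-2}$, so it lies in $\mathrm{im}(\mu_b)$, and in fact $r(\mathcal{E}) = -\mu_b(\xi^\mathcal{A}_b)$ by matching $\mathbb{C}^*$-weights; whereas $r(\pi^* \beta)$ has $\lambda^{n-1}$-coefficient $n\beta$, so a combination $c\,\mathcal{E} + \pi^*\beta$ maps into $\mathrm{im}(\mu_b)$ only when $\beta = 0$. Hence $\mathrm{im}(\mu_b) \cap \mathrm{im}(r)$ is the line spanned by $\mu_b(\xi^\mathcal{A}_b)$, and therefore $\ker \rho_b = \mu_b^{-1}(\mathrm{im}(\mu_b)\cap \mathrm{im}(r)) = \mathbb{C}\,\xi^\mathcal{A}_b$; this line is nonzero because $b \in \mathcal{A}^{\rm reg}$ forces $b \ne 0$ and hence $\xi^\mathcal{A}_b \ne 0$. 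The main obstacle is the normal-bundle bookkeeping of the third step: showing that all global sections of $TY|_S$ are vertical, and then tracking how the single non-translation direction (the Euler field) lands back inside $\mathrm{im}(\mu_b)$ while every nonzero translation escapes it via the leading term of $d\pi$.
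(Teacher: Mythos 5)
Your proposal is correct and follows essentially the same route as the paper's proof: the same normal-bundle exact sequence $0 \to TS \to TK|_S \to N \to 0$, the same identification of $H^0(S, TK|_S)$ with $\mathbb{C}\,\mathcal{E} \oplus H^0(\Sigma,K)$ via the vertical sequence and the vanishing of $H^0(S,\pi^*K^{-1})$, and the same key point that the image of the characteristic map consists of sections with no $\lambda^{n-1}$-coefficient, so the translations $\pi^*\beta$ escape it while the Euler field lands on $\xi^{\mathcal{A}}_b$. Your version merely makes explicit the computations ($r(\mathcal{E}) = -\sum_j j\,a_j\lambda^{n-j} = -\mu_b(\xi^{\mathcal{A}}_b)$ and $r(\pi^*\beta) = \pi^*(\beta)\,d\pi$) that the paper leaves as remarks.
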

\begin{proof}
Denote $S_b$ more simply as $S$. On $S$ we have an exact sequence
\begin{equation}\label{equles1}
0 \to TS \to TK|_S \buildrel \beta \over \to N \to 0,
\end{equation}
where $N$ is the normal bundle to $S$ in $K$. The Kodaira-Spencer map fits into the following commutative diagram
\begin{equation*}
\xymatrix{
0 \ar[r] & H^0(S , TK|_S) \ar[r]^\beta & H^0(S , N ) \ar[r]^-{\delta} & H^1(S , TS ) \\
& & \mathcal{A} \ar[u]^{\chi} \ar[ur]^{\rho_b} &
}
\end{equation*}
where the horizontal sequence of maps is obtained from the long exact sequence associated to (\ref{equles1}) and $\chi : \mathcal{A} \to H^0(S , N)$ is the characteristic map \cite{gri}. We have $N = [S]|_S = \pi^*K^n$, hence $H^0(S,N) = H^0(S,\pi^*K^n) = \bigoplus_{j=1}^n H^0(\Sigma , K^j)$. One sees that the map $\chi$ is the obvious inclusion $\mathcal{A} = \bigoplus_{j=2}^n H^0(\Sigma , K^j) \subset \bigoplus_{j=1}^n H^0(\Sigma , K^j)$. By exactness of the horizontal sequence we have that $Ker(\rho_b) = Ker(\delta) \cap Im(\chi) = Im(\beta) \cap Im(\chi)$.\\

Observe that on the total space of $K$ there is an exact sequence $0 \to \pi^*K \to TK \to \pi^*K^{-1} \to 0$. Restricting to $S$ and taking the associated long exact sequence gives:
\begin{equation*}
0 \to H^0( S , \pi^* K ) \to H^0( S , TK|_S ) \to H^0(S , \pi^*K^{-1}).
\end{equation*}
Now using $\pi_* \mathcal{O}_S = \mathcal{O}_\Sigma \oplus \mathcal{O}(K^{-1}) \oplus \dots \oplus \mathcal{O}(K^{-n+1})$, we see that $H^0(S , \pi^* K^{-1}) = 0$. Therefore we have an isomorphism $H^0(S , TK|_S ) \cong H^0(S , \pi^*K)$. Moreover, we have $H^0(S , \pi^* K) \cong H^0(\Sigma , \mathcal{O} ) \oplus H^0(\Sigma , K)$. In fact, it is easy to see what the corresponding sections on $H^0(S , TK|_S)$ are. The factor $H^0(\Sigma , \mathcal{O}) \cong \mathbb{C}$ is spanned by the vector field generating the $\mathbb{C}^*$-action in the fibres of $K \to \Sigma$. An element $\alpha \in H^0(\Sigma , K)$ defines a section of $TK|_S$ whose value at $s \in S$ is $\alpha( \pi(s) ) \in (\pi^*K)_s \subset (TK)_s$. Given an element $(c , \alpha ) \in H^0(\Sigma , \mathcal{O}) \oplus H^0(\Sigma , K)$ it is easy to see that $\beta( c , \alpha )$ is in the image of $\chi$ only if $\alpha = 0$ (because our spectral curves have no $a_1$ coefficient in their characteristic polynomial). This leaves a $1$-dimensional space of deformations of $S$ in $K$ generated by the $\mathbb{C}^*$-action in the fibres of $K$. As this corresponds to the $\mathbb{C}^*$-action on $\mathcal{A}$, we have shown that the kernel of $\rho_b$ is indeed spanned by $\xi^\mathcal{A}_b$.
\end{proof}

In a similar fashion we can view $h: \mathcal{M}_{n,d}^{\rm reg} \to \mathcal{A}^{\rm reg}$ as a family of abelian varieties, hence to any $b \in \mathcal{A}^{\rm reg}$ we have a Kodaira-Spencer map $\theta_b : \mathcal{A} \to H^1( h^{-1}(b) , T h^{-1}(b) )$.

\begin{lemma}\label{lemksker2}
Let $Y$ be a holomorphic vector field on $\mathcal{A}^{\rm reg}$ such that $\theta_b(Y_b) = 0$ for all $b \in \mathcal{A}^{\rm reg}$. Then $Y = f \xi^\mathcal{A}$ for some holomorphic function $f$ on $\mathcal{A}^{\rm reg}$.
\end{lemma}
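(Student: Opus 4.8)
The plan is to relate the Kodaira-Spencer map $\theta_b$ for the family of Prym varieties to the Kodaira-Spencer map $\rho_b$ for the family of spectral curves, whose kernel we have already identified in Lemma \ref{lemksker1}. The key point is that the fibre $h^{-1}(b) = Prym_{\tilde{d}}(S_b,\Sigma)$ is a subtorus of $Jac_{\tilde{d}}(S_b)$, and the deformation of such a Jacobian (or Prym) is governed entirely by the deformation of the underlying curve $S_b$. Concretely, the tangent space to $Jac(S)$ at any point is $H^1(S,\mathcal{O}_S)$, and the infinitesimal variation of Hodge structure for the family $S^{\rm reg}_{\rm univ} \to \mathcal{A}^{\rm reg}$ gives a map $\mathcal{A} \cong T_b\mathcal{A}^{\rm reg} \to \mathrm{Hom}(H^0(S,K_S), H^1(S,\mathcal{O}_S))$ which factors through $\rho_b$ via the cup product with $H^1(S,TS)$. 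The upshot is that $\theta_b(Y_b)=0$ should force $\rho_b(Y_b)$ to annihilate the relevant piece of $H^0(S,K_S)$, and I expect this to imply $\rho_b(Y_b)=0$, whence $Y_b \in \ker(\rho_b) = \mathbb{C}\,\xi^\mathcal{A}_b$ by Lemma \ref{lemksker1}.

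First I would make precise the statement that $\theta_b$ is computed by the composition of $\rho_b$ with the natural contraction
\begin{equation*}
H^1(S,TS) \otimes H^0(S,K_S) \to H^1(S,\mathcal{O}_S),
\end{equation*}
using the standard description of the differential of the period/Abel-Jacobi map (Griffiths' infinitesimal variation of Hodge structure). Since $h^{-1}(b)$ is the Prym, its tangent space is the kernel of $Nm_*$ inside $H^1(S,\mathcal{O}_S)$ and its deformations are cut out by the corresponding invariant part of $H^0(S,K_S)$; I would check that the contraction map restricts correctly to these Prym pieces, so that $\theta_b(Y_b)=0$ means that $\rho_b(Y_b)$, paired against all of the relevant coinvariant sections of $K_S$, vanishes. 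The crucial input making this pairing nondegenerate is that the multiplication/contraction is faithful, i.e. an element of $H^1(S,TS)$ that contracts to zero against a large enough subspace of $H^0(S,K_S)$ must itself vanish.

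The main obstacle, and where I would spend most of the effort, is establishing exactly that nondegeneracy: showing that the only class $\eta \in H^1(S,TS)$ in the image of $\rho_b$ which contracts trivially with the Prym-relevant holomorphic differentials is $\eta = 0$. This is essentially an infinitesimal Torelli statement for the spectral curves, and it is precisely here that Lemma \ref{lemmahyper} (that $S$ is not hyperelliptic) enters, since infinitesimal Torelli for curves is known to hold except in the hyperelliptic case. I would therefore invoke the non-hyperellipticity of $S$ together with Noether's theorem on the surjectivity of the multiplication map $\mathrm{Sym}^2 H^0(S,K_S) \to H^0(S,K_S^2)$ to conclude that the cup-product pairing on $H^1(S,TS)$ is injective after composing with $\rho_b$, so that $\rho_b(Y_b)=0$.

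Finally, having reduced to $\rho_b(Y_b)=0$ for every $b \in \mathcal{A}^{\rm reg}$, Lemma \ref{lemksker1} gives $Y_b \in \mathbb{C}\,\xi^\mathcal{A}_b$ pointwise, so there is a scalar $f(b)$ with $Y_b = f(b)\,\xi^\mathcal{A}_b$. It then remains to observe that $f$ is holomorphic: since $\xi^\mathcal{A}$ is a nonvanishing holomorphic vector field (indeed its components are the obvious linear weight-$j$ vector fields, which do not all vanish on $\mathcal{A}^{\rm reg}$), one can locally solve $f = Y^i/\xi^{\mathcal{A},i}$ on the open set where a chosen component $\xi^{\mathcal{A},i}$ is nonzero, and these patch to a global holomorphic function because the quotient is forced to agree on overlaps. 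This yields $Y = f\,\xi^\mathcal{A}$ with $f \in \mathcal{O}(\mathcal{A}^{\rm reg})$, completing the proof.
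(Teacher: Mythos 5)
Your overall skeleton---reduce $\theta_b(Y_b)=0$ to $\rho_b(Y_b)=0$ via an infinitesimal Torelli statement that uses the non-hyperellipticity of $S$ (Lemma \ref{lemmahyper}), then conclude with Lemma \ref{lemksker1}---is the same as the paper's, and your closing remark on the holomorphy of $f$ is fine (indeed more careful than the paper, which passes over this point). But the step you yourself flag as ``the main obstacle'' contains a genuine gap. What your argument requires is that a class $\eta=\rho_b(Y_b)\in H^1(S,TS)$ which contracts trivially against \emph{only} the Prym-relevant subspace $W^-=\ker\bigl(\pi_*\colon H^0(S,K_S)\to H^0(\Sigma,K)\bigr)$ must vanish. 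Noether's theorem together with non-hyperellipticity gives faithfulness of the contraction against \emph{all} of $H^0(S,K_S)$: it is dual to surjectivity of $\mathrm{Sym}^2 H^0(S,K_S)\to H^0(S,K_S^2)$, hence gives injectivity of $H^1(S,TS)\to \mathrm{Hom}\bigl(H^0(S,K_S),H^1(S,\mathcal{O}_S)\bigr)$. It does \emph{not} give the restricted statement you need, which amounts to surjectivity of the partial multiplication map $W^-\otimes H^0(S,K_S)\to H^0(S,K_S^2)$---an ``infinitesimal Prym--Torelli'' assertion that is genuinely more delicate and is nowhere established in your proposal.

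The paper sidesteps exactly this difficulty with an isogeny trick: since $\Sigma$ is fixed, $Jac(\Sigma)$ does not deform, so $\theta_b(Y_b)=0$ forces the deformation of $Z=Prym_{\tilde d}(S,\Sigma)\times Jac(\Sigma)$ to be trivial; the map $p\colon Z\to Jac_{\tilde d}(S)$, $p(M,N)=M\otimes\pi^*(N)$, is a finite covering and $p^*$ is an isomorphism on $H^1(-,T-)$, so the deformation of the \emph{full} Jacobian is trivial, i.e.\ $\tau_b(Y_b)=0$ for the Kodaira--Spencer map $\tau_b$ of the Jacobian family. Only then is infinitesimal Torelli invoked (injectivity of $J\colon H^1(S,TS)\to H^1(Jac(S),TJac(S))$ for non-hyperelliptic $S$), i.e.\ faithfulness against the full Hodge structure, where Noether's theorem really does suffice. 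Alternatively, your route can be repaired without the isogeny by one missing observation: the sub-Hodge structure $\pi^*H^1(\Sigma,\mathbb{C})\subset H^1(S_b,\mathbb{C})$ is constant over $\mathcal{A}^{\rm reg}$ (the classes $\pi_b^*[\alpha]$ are flat sections lying in $F^1$), so any class in the image of $\rho_b$ automatically contracts to zero against $\pi^*H^0(\Sigma,K)$. Combined with your hypothesis on $W^-$ and the decomposition $H^0(S,K_S)=\pi^*H^0(\Sigma,K)\oplus W^-$ (coming from $\pi_*\circ\pi^*=n$), the contraction then vanishes on all of $H^0(S,K_S)$ and the standard Noether argument finishes. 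As written, however, that observation is absent and the nondegeneracy you invoke is unproved.
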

\begin{proof}
Let $Jac_{\tilde{d}}(S)$ be the degree $\tilde{d}$ component of the Picard variety of $S$ and $h^{-1}(b) = Prym_{\tilde{d}}(S,\Sigma)$ the Prym variety. We can in the same way define a Kodaira-Spencer map $\tau_b : \mathcal{A} \to H^1( Jac(S) , TJac(S) )$. Let $Z = Prym_{\tilde{d}} \times Jac(\Sigma)$. The map $p : Z \to Jac_{\tilde{d}}(S)$ given by $p(M,N) = M \otimes \pi^*(N)$ is a covering space with fibre $Jac(\Sigma)[n]$, the points of order $n$ in $Jac(\Sigma)$. Furthermore $TZ \cong p^*(TJac_{\tilde{d}}(S)) \simeq \mathbb{C}^{2g_S}$ and by Hodge theory the pullback $p^* : H^1( Jac_{\tilde{d}}(S) , TJac_{\tilde{d}}(S) ) \to H^1( Z , TZ )$ is an isomorphism. It follows that if $\theta_b(Y_b) = 0$ then the corresponding deformation of $Jac_{\tilde{d}}(S)$ is trivial and so is the deformation of $Jac(S)$. This means that $\tau_b(Y_b) = 0$. Next observe that there is a natural map $J : H^1(S , TS ) \to H^1( Jac(S) , TJac(S) )$ such that $\tau_b = J \circ \rho_b$. By Lemma \ref{lemmahyper}, $S$ is not hyperelliptic and as is well known, this implies that $J$ is injective. Therefore $\tau_b(Y_b) = 0$ implies that $\rho_b(Y_b) = 0$ and by Lemma \ref{lemksker1}, $Y$ is a multiple of $\xi^\mathcal{A}_b$. 
\end{proof}


\subsection{Classification of holomorphic vector fields}\label{secchvf}

The restriction $h : \mathcal{M}_{n,d}^{{\rm reg}} \to \mathcal{A}^{\rm reg}$ of the Hitchin fibration over $\mathcal{A}^{\rm reg}$ is a smooth fibre bundle, so we have an exact sequence:
\begin{equation*}
\xymatrix{
0 \ar[r] & {\rm Ker}( h_* ) \ar[r] & T \mathcal{M}_{n,d}^{{\rm reg}} \ar[r]^-{h_*} & \mathcal{A} \ar[r] & 0,
}
\end{equation*}
where $\mathcal{A}$ is thought of as a trivial vector bundle on $\mathcal{M}_{n,d}^{{\rm reg}}$. The map sending a linear function $f \in \mathcal{A}^*$ to the corresponding Hamiltonian vector field $X_f$ gives an isomorphism ${\rm Ker}(h_*) \cong \mathcal{A}^*$, hence our exact sequence becomes:
\begin{equation}\label{equextang}
\xymatrix{
0 \ar[r] & \mathcal{A}^* \ar[r] & T \mathcal{M}_{n,d}^{{\rm reg}} \ar[r]^-{h_*} & \mathcal{A} \ar[r] & 0.
}
\end{equation}

\begin{proposition}\label{propxbase}
Let $X$ be a holomorphic vector field on $\mathcal{M}_{n,d}^{\rm reg}$. There is a holomorphic function $f$ on $\mathcal{A}^{\rm reg}$ such that for any $m \in \mathcal{M}_{n,d}^{\rm reg}$ we have $h_* X_m = f(b) \xi^\mathcal{A}_b$, where $b = h(m)$.
\end{proposition}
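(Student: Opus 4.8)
The plan is to push the vector field $X$ forward along $h$, to show that this pushforward is constant along the fibres and hence descends to a holomorphic vector field $Y$ on the base $\mathcal{A}^{\rm reg}$, then to argue that $Y$ lies in the kernel of the fibrewise Kodaira--Spencer map, and finally to invoke Lemma \ref{lemksker2} to conclude that $Y$ is a function multiple of $\xi^\mathcal{A}$.

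\textbf{Descent to the base.} First I would observe, using the exact sequence (\ref{equextang}), that $h_* X$ is a holomorphic section of the trivial bundle with fibre $\mathcal{A}$ over $\mathcal{M}_{n,d}^{\rm reg}$, i.e. a holomorphic map $m \mapsto h_* X_m \in T_{h(m)}\mathcal{A}^{\rm reg} = \mathcal{A}$. Restricting this map to a fibre $h^{-1}(b) = Prym_{\tilde{d}}(S,\Sigma)$, which is a compact connected abelian variety, gives a holomorphic map from a compact connected complex manifold into a vector space, which must be constant. Hence $h_* X_m$ depends only on $b = h(m)$, and composing with a local holomorphic section of the submersion $h$ shows that the resulting assignment $b \mapsto Y_b := h_* X_m$ is a holomorphic vector field on $\mathcal{A}^{\rm reg}$.

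\textbf{Vanishing of the Kodaira--Spencer class.} Next I would show that $\theta_b(Y_b) = 0$ for every $b \in \mathcal{A}^{\rm reg}$, where $\theta_b$ is the Kodaira--Spencer map of the family of Prym varieties. Choosing a $C^\infty$ splitting of $0 \to {\rm Ker}(h_*) \to T\mathcal{M}_{n,d}^{\rm reg} \to h^*T\mathcal{A}^{\rm reg} \to 0$ produces a smooth lift $\tilde{Y}$ of $Y$, and $\theta_b(Y_b)$ is by definition the class of $\overline{\partial}\tilde{Y}$ restricted to the fibre. Since $X$ is $h$-related to $Y$ as well, the difference $\tilde{Y} - X$ is a smooth vertical vector field, and because $X$ is holomorphic we get $\overline{\partial}\tilde{Y}|_{h^{-1}(b)} = \overline{\partial}(\tilde{Y} - X)|_{h^{-1}(b)}$, which is $\overline{\partial}$-exact on the fibre. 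Thus $\theta_b(Y_b) = 0$. With this in hand, Lemma \ref{lemksker2} immediately gives $Y = f\xi^\mathcal{A}$ for a holomorphic function $f$ on $\mathcal{A}^{\rm reg}$, whence $h_* X_m = Y_b = f(b)\xi^\mathcal{A}_b$ with $b = h(m)$, as claimed.

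The hard part will be making the second step rigorous: one must justify carefully that the existence of a global holomorphic lift of $Y$ forces the fibrewise Kodaira--Spencer class to vanish. The first step is essentially just compactness of the abelian-variety fibres, and the last step is a direct appeal to Lemma \ref{lemksker2}, so the whole argument hinges on correctly relating the holomorphic vector field $X$ to the deformation theory of the fibres.
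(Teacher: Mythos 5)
Your proof is correct, and its middle step takes a genuinely different route from the paper's. The first and last steps coincide with the paper's proof: the descent of $h_*X$ to a holomorphic vector field $Y$ on $\mathcal{A}^{\rm reg}$ via the exact sequence (\ref{equextang}) and constancy on the compact abelian-variety fibres, and the final appeal to Lemma \ref{lemksker2}, are exactly what the paper does. Where you diverge is in proving $\theta_b(Y_b)=0$. The paper argues dynamically: it integrates $X$ for a short time, using properness of the Hitchin map to get a uniform existence interval over the whole fibre, observes that the resulting flow carries $h^{-1}(b)$ biholomorphically onto the nearby fibres $h^{-1}(\gamma_b(t))$, and concludes that the Kodaira--Spencer class in the direction $Y_b$ vanishes. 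You instead argue cohomologically: $X|_{h^{-1}(b)}$ is itself a global holomorphic lift of the constant section $Y_b$ of $h^*T_b\mathcal{A}^{\rm reg}$, so by exactness of $H^0\bigl(h^{-1}(b), T\mathcal{M}^{\rm reg}_{n,d}|_{h^{-1}(b)}\bigr) \to H^0\bigl(h^{-1}(b), h^*T_b\mathcal{A}^{\rm reg}\bigr) \to H^1\bigl(h^{-1}(b), Th^{-1}(b)\bigr)$ the connecting image $\theta_b(Y_b)$ must vanish; your $\overline{\partial}$-computation with a smooth lift $\tilde{Y}$ is precisely the Dolbeault form of this statement, and it is rigorous as written (it is the standard Dolbeault description of the Kodaira--Spencer map), so the ``hard part'' you flag at the end is in fact already complete. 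As for what each approach buys: your argument is purely infinitesimal, local on the base, and needs neither properness of $h$ nor any existence theory for flows, so it is shorter and somewhat more robust; the paper's argument produces actual biholomorphisms between nearby fibres (a stronger, finite statement than first-order triviality of the family) and reuses properness of the Hitchin map, a tool the paper deploys repeatedly elsewhere (e.g.\ in Propositions \ref{propintegrate} and \ref{propfactor}).
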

\begin{proof}
From (\ref{equextang}) we see that $h_*X$ is a section of the trivial bundle $\mathcal{A}$ and therefore must be constant over the fibres of $h$. So there is a holomorphic vector field $Y$ on $\mathcal{A}^{\rm reg}$ such that $h_* X_m = Y_{h(m)}$ for all $m \in \mathcal{M}_{n,d}^{\rm reg}$. We need to show that $Y = f\xi^\mathcal{A}$ for some holomorphic function $f$ on $\mathcal{A}^{\rm reg}$.\\

Let $b \in \mathcal{A}^{\rm reg}$ and let $\epsilon > 0$ be such that there exists an integral curve $\rho_b(t) : (-\epsilon , \epsilon ) \to \mathcal{A}^{\rm reg}$ of $Y$ with $\gamma_b(0) = b$. The integral curves of $X$ through points of $h^{-1}(b)$ must lie over $\gamma_b(t)$. This together with properness of the Hitchin map ensures that for every $m \in h^{-1}(b)$, the integral curve $\hat{\gamma}_m(t)$ of $X$ with $\hat{\gamma}_m(0) = m$ exists for $t$ in the interval $(-\epsilon , \epsilon)$. The fact that $X$ is holomorphic now implies that the fibres $h^{-1}( \gamma_b(t) )$ over $\gamma_b(t)$ are all biholomorphic to $h^{-1}(b)$. Therefore $Y_b$ is in the Kernel of the Kodaira-Spencer map $\theta_b$. Hence by Lemma \ref{lemksker2}, $Y$ has the expected form.
\end{proof}

We now give a construction of a large family of holomorphic vector fields on $\mathcal{M}_{n,d}^{\rm reg}$. Let $\mu$ be a holomorphic $1$-form on $\mathcal{A}^{\rm reg}$. Then we define a holomorphic vector field $X_\mu$ on $\mathcal{M}_{n,d}^{\rm reg}$ by the relation
\begin{equation*}
i_{X_\mu} \Omega_I = h^*\mu.
\end{equation*}
Note that if $\mu$ extends to a holomorphic $1$-form on all of $\mathcal{A}$, then $X_\mu$ extends to all of $\mathcal{M}_{n,d}^{\rm sm}$.

\begin{theorem}\label{theoremvecreg}
Let $X$ be a holomorphic vector field on $\mathcal{M}_{n,d}^{\rm reg}$. Then there exists a holomorphic function $f$ and holomorphic $1$-form $\mu$ on $\mathcal{A}^{\rm reg}$ such that 
\begin{equation*}
X = h^*(f) \xi + X_\mu.
\end{equation*}
Conversely, for every such $f,\mu$ we obtain a holomorphic vector field $X = h^*(f)\xi + X_\mu$. Moreover $X$ extends to $\mathcal{M}_{n,d}^{\rm sm}$ if and only if $f$ and $\mu$ extend to $\mathcal{A}$.
\end{theorem}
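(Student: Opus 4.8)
The plan is to prove the three assertions in turn: first the decomposition of an arbitrary holomorphic vector field on $\mathcal{M}_{n,d}^{\rm reg}$, then the (trivial) converse, and finally the extension criterion, which is where the real work lies. For the decomposition I would start from Proposition \ref{propxbase}: given a holomorphic vector field $X$ on $\mathcal{M}_{n,d}^{\rm reg}$ there is a holomorphic $f$ on $\mathcal{A}^{\rm reg}$ with $h_* X_m = f(b)\xi^\mathcal{A}_b$, where $b = h(m)$. Since $h_*\xi_m = \xi^\mathcal{A}_{h(m)}$, the field $X - h^*(f)\xi$ is annihilated by $h_*$ and hence is a section of ${\rm Ker}(h_*)$. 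Under the identification ${\rm Ker}(h_*) \cong \mathcal{A}^*$ from the exact sequence (\ref{equextang}), this vertical field is a holomorphic map $G : \mathcal{M}_{n,d}^{\rm reg} \to \mathcal{A}^*$. Restricted to a fibre $h^{-1}(b)$, which is a compact connected abelian variety, each component of $G$ is a holomorphic function and so constant; thus $G$ descends to a holomorphic map $\mathcal{A}^{\rm reg} \to \mathcal{A}^* = T^*\mathcal{A}$, that is, a holomorphic $1$-form $\mu$ on $\mathcal{A}^{\rm reg}$. Unwinding the defining relation $i_{X_\mu}\Omega_I = h^*\mu$ then identifies $X - h^*(f)\xi$ with $X_\mu$, giving $X = h^*(f)\xi + X_\mu$. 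The converse is immediate, as $h^*(f)\xi$ and $X_\mu$ are manifestly holomorphic on $\mathcal{M}_{n,d}^{\rm reg}$ for any holomorphic $f$ and $\mu$.

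For the extension statement, one direction is easy: if $f$ and $\mu$ extend to $\mathcal{A}$, then $h^*(f)$ is a global holomorphic function on $\mathcal{M}_{n,d}^{\rm sm}$, the field $\xi$ is globally defined there, and by the remark preceding the theorem $X_\mu$ extends to all of $\mathcal{M}_{n,d}^{\rm sm}$; hence so does $X$. For the hard direction I would assume $X$ extends to $\mathcal{M}_{n,d}^{\rm sm}$ and extend $f$ and $\mu$ across the discriminant $\mathcal{D} = \mathcal{A}\setminus\mathcal{A}^{\rm reg}$. The strategy is to extend first across the generic stratum $\mathcal{D}^0$ using the submersivity result of Proposition \ref{prophitsurj}, and then across the remaining codimension-$\ge 2$ locus $\mathcal{D}\setminus\mathcal{D}^0$ by Hartogs.

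To extend $f$, fix $b_0 \in \mathcal{D}^0$. Since $\xi^\mathcal{A}$ vanishes only at the origin and $0 \notin \mathcal{D}^0$ (the spectral curve over $0$ is non-reduced), $\xi^\mathcal{A}$ is non-vanishing near $b_0$, so I may choose $\ell \in \mathcal{A}^*$ with $\ell(\xi^\mathcal{A}_{b_0}) \neq 0$. At a locally free point $m_0 \in h^{-1}(b_0)$ the differential $h_*$ is surjective by Proposition \ref{prophitsurj}, so $h$ is open near $m_0$ and $X$ extends holomorphically there. On $\mathcal{A}^{\rm reg}$ the function $F := \ell(h_* X)$ equals $f\cdot \ell(\xi^\mathcal{A})$ and thus depends only on $b$; evaluating at points near $m_0$ and letting them tend to $m_0$ shows $F$ is bounded near $b_0$. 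By the Riemann extension theorem $F$ extends holomorphically across the smooth divisor $\mathcal{D}^0$, and since $\ell(\xi^\mathcal{A})$ is holomorphic and non-zero near $b_0$, so does $f = F/\ell(\xi^\mathcal{A})$. As $b_0$ was arbitrary, $f$ extends to $\mathcal{A}^{\rm reg}\cup\mathcal{D}^0$; because $\mathcal{D}$ is irreducible with $\mathcal{D}^0$ dense, $\mathcal{D}\setminus\mathcal{D}^0$ has codimension $\ge 2$ in $\mathcal{A}$, and Hartogs' theorem extends $f$ to all of $\mathcal{A}$.

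With $f$ defined on $\mathcal{A}$, the field $h^*(f)\xi$ is defined on $\mathcal{M}_{n,d}^{\rm sm}$, so $X_\mu = X - h^*(f)\xi$ extends there as well. To recover $\mu$ I use the relation $i_{X_\mu}\Omega_I = h^*\mu$, valid on $\mathcal{M}_{n,d}^{\rm sm}$ by continuity: at the locally free point $m_0$ over $b_0 \in \mathcal{D}^0$ the identity $\Omega_I(X_\mu(m_0), w) = \mu(b_0)(h_* w)$ holds for all tangent $w$, and since $h_*$ is surjective there, $\mu(b_0) \in \mathcal{A}^*$ is uniquely determined and depends holomorphically on $b_0$. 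The same boundedness-plus-Riemann-extension argument across $\mathcal{D}^0$, followed by Hartogs across $\mathcal{D}\setminus\mathcal{D}^0$, extends $\mu$ to all of $\mathcal{A}$. I expect the genuine difficulty to be concentrated in these extensions across $\mathcal{D}^0$: the analysis over $\mathcal{A}^{\rm reg}$ is essentially formal once Proposition \ref{propxbase} is available, whereas controlling the growth of $f$ and $\mu$ at the discriminant is exactly the place where the submersivity input of Proposition \ref{prophitsurj}, and the codimension bound that makes Hartogs applicable, are indispensable.
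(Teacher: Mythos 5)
Your proposal is correct and follows essentially the same route as the paper's proof: the decomposition comes from Proposition \ref{propxbase} together with the triviality of ${\rm Ker}(h_*)\cong\mathcal{A}^*$ and constancy of vertical fields on the abelian-variety fibres, and the extension across the discriminant uses exactly the paper's key inputs, namely surjectivity of $h_*$ at locally free points over $\mathcal{D}^0$ (Proposition \ref{prophitsurj}) followed by a Hartogs-type extension across the codimension-$\ge 2$ set $\mathcal{D}\setminus\mathcal{D}^0$. The only differences are cosmetic: where the paper descends $\alpha = i_Y\Omega_I$ to the base by observing that $Y$ commutes with the Hamiltonian flows, you recover $\mu$ pointwise from surjectivity of $h_*$ plus Riemann removable-singularity arguments (and your localized treatment of the extension of $f$, via boundedness and division by $\ell(\xi^{\mathcal{A}})$, is in fact more detailed than the paper's one-line appeal to surjectivity of $h$).
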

\begin{proof}
Let $X$ be a holomorphic vector field on $\mathcal{M}_{n,d}^{\rm reg}$. By Proposition \ref{propxbase}, there is a holomorphic function $f$ on $\mathcal{A}^{\rm reg}$ such that $h_*(X_m) = f(b) \xi^\mathcal{A}_b$, where $b = h(m)$. Consider the holomorphic vector field $Y = X - h^*(f)\xi$ on $\mathcal{M}_{n,d}^{\rm reg}$. By construction $h_*(Y) = 0$, so $Y$ is valued in $Ker(h_*)$, which is a trivial bundle isomorphic to $\mathcal{A}^*$. It follows that $Y$ is constant along the fibres of $h$, hence $Y = X_\mu$ for a unique holomorphic $1$-form on $\mathcal{A}^{\rm reg}$. This shows that $X = h^*(f) \xi + X_\mu$ as required. It is clear that if $f$ and $\mu$ extend to $\mathcal{A}$, then $X$ extends to $\mathcal{M}_{n,d}^{\rm sm}$.\\

Conversely, suppose that $X$ extends to $\mathcal{M}_{n,d}^{\rm sm}$. It follows that $f$ extends to $\mathcal{A}$, because $h : \mathcal{M}_{n,d}^{\rm sm} \to \mathcal{A}$ is surjective, and $h_*( X_m ) = f(b) \xi^\mathcal{A}_b$ for any $m \in \mathcal{M}_{n,d}^{\rm reg}$ and $b = h(m)$. Let $U \subset \mathcal{M}_{n,d}^{\rm sm}$ be the points where $h_*$ is surjective and set $Y = X - h^*(f)\xi$. Since $X$ and $h^*(f)$ extend to $\mathcal{M}_{n,d}^{\rm sm}$, $Y$ also extends to $\mathcal{M}_{n,d}^{\rm sm}$ and satisfies $h_*(Y) = 0$. It follows that there is a holomorphic $\mathcal{A}^*$-valued function $\alpha$ on $U$ such $i_{Y_m} (\Omega_I)_m = h^*( \alpha(m) )$ for any $m \in U$. In particular this means that $\alpha$ is an extension of $h^*(\mu)$ from $\mathcal{M}_{n,d}^{\rm reg}$ to $U$. Now suppose that $b$ belongs to $\mathcal{D}^0$. By Proposition \ref{prophitsurj}, there is a point $m \in h^{-1}(b)$ such that $h_*$ is surjective at $m$. Note that since $Y$ commutes with the Hamiltonian flows, we may find an open neighborhood $N \subseteq \mathcal{A}$ with $b \in N$ and a holomorphic $\mathcal{A}^*$-valued function $\mu'$ on $N$ such that $\alpha = h^*(\mu')$ in a neighborhood of $m$. Clearly $\mu'$ must agree with $\mu$ on $N \cap (\mathcal{A} - \mathcal{D})$. Since this happens for all $b \in \mathcal{D}^0$, we have shown that $\mu$ extends to $\mathcal{A} - \mathcal{E}$, where $\mathcal{E} = \mathcal{D} - \mathcal{D}^0$. But $\mathcal{E}$ has codimension at least $2$ in $\mathcal{A}$, so in fact $\mu$ extends to the whole of $\mathcal{A}$.
\end{proof}

\begin{definition}
By a {\em $1$-parameter subgroup} $\phi_t$ of automorphisms of $\mathcal{M}_{n,d}$, we mean a group homomorphism $(\mathbb{R} , + ) \to Aut(\mathcal{M}_{n,d}), t \mapsto \phi_t$. If $X$ is a holomorphic vector field on $\mathcal{M}_{n,d}^{\rm sm}$, we say that $X$ {\em integrates} to $\phi_t$ if $X$ integrates to the restriction of $\phi_t$ to $\mathcal{M}_{n,d}^{\rm sm}$ in the usual sense.
\end{definition}

\begin{proposition}\label{propintegrate}
Let $X$ be a holomorphic vector field on $\mathcal{M}_{n,d}^{\rm sm}$, which by Theorem \ref{theoremvecreg} can be written in the form $X = h^*(f)\xi + X_\mu$ with $f$ a holomorphic function on $\mathcal{A}$ and $\mu$ a holomorphic $1$-form on $\mathcal{A}$. Then $X$ integrates to a $1$-parameter subgroup $\phi_t$ of $Aut(\mathcal{M}_{n,d})$ if and only if $f$ is constant.
\end{proposition}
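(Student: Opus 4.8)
The plan is to prove the two implications separately; the substance lies in the ``only if'' direction, while the converse is a matter of writing down an explicit flow. Suppose first that $X = h^*(f)\xi + X_\mu$ integrates to a $1$-parameter subgroup $\phi_t \in Aut(\mathcal{M}_{n,d})$. Since $X_\mu$ is vertical and $h_*\xi = \xi^\mathcal{A}$, the field $X$ is $h$-related to the holomorphic vector field $f\xi^\mathcal{A}$ on $\mathcal{A}$: for $m \in \mathcal{M}_{n,d}^{\rm sm}$ we have $\tfrac{d}{dt}h(\phi_t(m)) = f(h(\phi_t(m)))\,\xi^\mathcal{A}_{h(\phi_t(m))}$. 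As $\phi_t$ is defined for all $t \in \mathbb{R}$ and $h : \mathcal{M}_{n,d}^{\rm sm}\to\mathcal{A}$ is surjective, it follows that the flow of $f\xi^\mathcal{A}$ on $\mathcal{A}$ is complete. The entire argument then reduces to showing that completeness of $f\xi^\mathcal{A}$ forces $f$ to be constant.

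The key observation is that $f\xi^\mathcal{A}$ is everywhere a scalar multiple of $\xi^\mathcal{A}$, and $\xi^\mathcal{A}$ is tangent to the orbits of the $\mathbb{C}^*$-action $m^\mathcal{A}_\lambda$ on $\mathcal{A}$; hence $f\xi^\mathcal{A}$ is tangent to each orbit and its flow preserves it. I would therefore fix $b \in \mathcal{A}$ with $b \neq 0$ and pull back along the orbit map $p : \mathbb{C}\to\mathcal{A}$, $p(\zeta) = m^\mathcal{A}_\zeta(b)$ (so $p(0)=0$). In the coordinate $\zeta$ one computes $\xi^\mathcal{A} = \zeta\,\partial_\zeta$, so $f\xi^\mathcal{A}$ pulls back to $g(\zeta)\,\partial_\zeta$ with $g(\zeta) = \zeta\,\tilde f(\zeta)$, where $\tilde f(\zeta) = f(p(\zeta))$ is \emph{entire} on $\mathbb{C}$. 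Because each $H^0(\Sigma,K^j)$ has positive weight, $p$ is proper when $b \neq 0$, so the completeness of $f\xi^\mathcal{A}$ lifts through $p$ to completeness of the real flow of $g(\zeta)\,\partial_\zeta$ on $\mathbb{C}$. The main obstacle is to deduce from this that $g$ is linear. The clean way, which I would use, is to note that the time-$t$ map of a complete holomorphic vector field on $\mathbb{C}$ is a biholomorphism of $\mathbb{C}$ (its inverse being the time-$(-t)$ map), hence an affine map $\zeta \mapsto \alpha(t)\zeta + \beta(t)$; differentiating at $t=0$ shows $g$ is affine. Since $g(0)=0$ we get $g(\zeta) = c_b\,\zeta$, so $\tilde f \equiv c_b = \tilde f(0) = f(0)$. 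Thus $f(b) = f(0)$ for every $b \in \mathcal{A}$, and $f$ is constant.

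For the converse, suppose $f \equiv c$ is constant, so $X = c\xi + X_\mu$. Here $c\xi$ integrates to the $1$-parameter subgroup $m_{e^{ct}}$ of the $\mathbb{C}^*$-action, acting by automorphisms of all of $\mathcal{M}_{n,d}$, while $X_\mu$ integrates to a $1$-parameter group of automorphisms of the full moduli space (the vertical translation flows, complete by the properness argument of Proposition \ref{propxbase}). These two flows do not commute, but a direct computation using $m_\lambda^*\Omega_I = \lambda\,\Omega_I$ shows that the $\mathbb{C}^*$-action conjugates vertical fields to vertical fields, namely $(m_\lambda)_* X_\mu = X_{\lambda^{-1}(m^\mathcal{A}_{\lambda^{-1}})^*\mu}$, and that any two vertical fields $X_\nu, X_{\nu'}$ commute (both follow from $d\Omega_I = 0$ and $h_*X_\nu = 0$). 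These two facts let me integrate $c\xi + X_\mu$ by variation of parameters, obtaining $\phi_t = m_{e^{ct}} \circ e^{X_{\eta(t)}}$, where $\eta(t)$ is the holomorphic $1$-form on $\mathcal{A}$ obtained by integrating the $\mathbb{C}^*$-conjugates of $\mu$ over $[0,t]$. Since each factor is an automorphism of the full moduli space for every $t$, and the commutation relations make $t \mapsto \phi_t$ a homomorphism with generator $X$, this realises $X$ as the generator of a $1$-parameter subgroup of $Aut(\mathcal{M}_{n,d})$, completing the proof.
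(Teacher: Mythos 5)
Your proof is correct, and both halves are organised differently from the paper's, in ways worth recording. For the ``only if'' direction the paper first descends $\phi_t$ to a $1$-parameter group $\psi_t$ of automorphisms of $\mathcal{A}$ with $h \circ \phi_t = \psi_t \circ h$ (justified by the assertion that every global holomorphic function on $\mathcal{M}_{n,d}$ is a pullback from $\mathcal{A}$), restricts $\psi_t$ to a free $\mathbb{C}^*$-orbit $\mathcal{O}_b \cong \mathbb{C}^*$, and uses that an automorphism of $\mathbb{C}^*$ isotopic to the identity is $z \mapsto cz$; constancy of $f$ then follows because every orbit closure contains $0$. You never descend the automorphisms at all: you use only that $X$ is $h$-related to $f\xi^\mathcal{A}$ together with surjectivity of $h : \mathcal{M}_{n,d}^{\rm sm} \to \mathcal{A}$ to conclude that $f\xi^\mathcal{A}$ is complete, then pull back along the orbit map $p(\zeta) = m^\mathcal{A}_\zeta(b)$ extended over $\zeta = 0$ (proper precisely because all weights are positive, so completeness lifts by the escape lemma), and finish with the fact that an automorphism of $\mathbb{C}$ is affine. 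This buys two things: it bypasses the paper's unproved claim about holomorphic functions on $\mathcal{M}_{n,d}$, and it requires no case analysis on stabilisers (the paper must separately allow the kernel $\pm 1$ when $n=2$), since properness of $p$ does not require injectivity. For the converse, the paper simply exhibits the integrated flow
\begin{equation*}
e^{tX}\left(\overline{\partial}_E , \Phi \right) = \left(\overline{\partial}_E + \sum_{j=2}^n \alpha_{j,t}(b) \left( \Phi^{j-1} - \tfrac{(j-1)b_{j-1}}{n} Id \right) , e^{tf} \Phi \right), \qquad \alpha_{j,t}(b) = \int_0^t e^{(j-1)uf} \mu_j( e^{uf}b )\,du,
\end{equation*}
whereas you derive the same map by variation of parameters: solving $e^{ct}\,(m^\mathcal{A}_{e^{-ct}})^*\dot{\eta}(t) = \mu$ gives $\eta(t)$ with components exactly the $\alpha_{j,t}$ above, so the two converses coincide in substance, yours being the more structural derivation.

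One small correction to your converse: the conjugation formula should read $(m_\lambda)_* X_\mu = X_{\lambda\, (m^\mathcal{A}_{\lambda^{-1}})^*\mu}$, not $X_{\lambda^{-1} (m^\mathcal{A}_{\lambda^{-1}})^*\mu}$; with your power of $\lambda$, differentiating $(m_{e^{-t}})_* X_\mu$ at $t=0$ would give $[\xi , X_\mu] = X_{\mathcal{L}_{\xi^\mathcal{A}}(\mu) + \mu}$, contradicting Equation (\ref{equcomm2}) of Proposition \ref{propcommutators}. The slip is harmless: all your argument needs is that conjugating a vertical field $X_\mu$ by $m_\lambda$ yields another vertical field $X_\nu$ with $\nu$ a holomorphic $1$-form on $\mathcal{A}$, which holds either way.
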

\begin{proof}
Suppose that $X = h^*(f)\xi + X_\mu$ integrates to a $1$-parameter subgroup $\phi_t$ of automorphisms of $\mathcal{M}_{n,d}$. Since the only global holomorphic functions on $\mathcal{M}_{n,d}$ are those of the form $h^*(g)$, where $g$ is a holomorphic function on $\mathcal{A}$, we see that there is a $1$-parameter subgroup $\psi_t$ of automorphisms of $\mathcal{A}$ such that $h \circ \phi_t = \psi_t \circ h$. It follows that $f\xi^\mathcal{A}$ integrates to $\psi_t$.

Choose a point $b \in \mathcal{A}$ for which $\mathbb{C}^*$  acts freely (or with kernel $\pm 1$ in the case $n = 2$). Let $\mathcal{O}_b \cong \mathbb{C}^*$ be the orbit. Then $\psi_t$ restricts to a $1$-parameter family of automorphisms of $\mathcal{O}_b$. Any automorphism of $\mathbb{C}^*$ is either of the form $z \mapsto cz$ or $z \mapsto cz^{-1}$, where $c \in \mathbb{C}^*$. In our case $\psi_t$ is connected to the identity, so the automorphisms $\psi_t$ of $\mathcal{O}_b$ must be of the form $z \mapsto m_{e^{at}} z$ for some $a \in \mathbb{C}$. Thus for any $b \in \mathcal{A}$ for which the stabiliser of the $\mathbb{C}^*$-action is trivial (or $\pm 1$ in the case $n=2$) we have deduced that $\psi_t(b) = m_{e^{at}} b$, for some $a \in \mathbb{C}$. In fact it is clear that $a = f(b)$. What we have shown is that $f$ is constant on the orbit $\mathcal{O}_b$. Since the closure of any such orbit contains the origin $0 \in \mathcal{A}$, we see that $f$ must be a constant.\\

Conversely, suppose that $X = f\xi + X_\mu$, where $f$ is constant. In the case that $f = 0$, it is easy to integrate $X$. In fact if $(\overline{\partial}_E,\Phi) \in \mathcal{M}_{n,d}$ and $b = h(\overline{\partial}_E , \Phi )$ then
\begin{equation*}
e^{tX}(\overline{\partial}_E , \Phi ) = \left(\overline{\partial}_E + t \sum_{j=2}^n \mu_j(b) ( \Phi^{j-1} - \frac{(j-1)b_{j-1}}{n} Id ) , \Phi \right),
\end{equation*}
where $\mu_j(b)$ is the component of $\mu(b)$ in $H^1(\Sigma , K^{1-j})$. More generally, when $f$ is non-zero we find, 
\begin{equation*}
e^{tX}(\overline{\partial}_E , \Phi ) = \left(\overline{\partial}_E + \sum_{j=2}^n \alpha_{j,t}(b) ( \Phi^{j-1} - \frac{(j-1)b_{j-1}}{n} Id ) , e^{tf} \Phi \right),
\end{equation*}
where $\alpha_{j,t}(b)$ is given by
\begin{equation*}
\alpha_{j,t}(b) = \int_0^t e^{(j-1)uf} \mu_j( e^{uf}b )du.
\end{equation*}

\end{proof}


\subsection{Further properties of the holomorphic vector fields}\label{secfphvf}

\begin{proposition}\label{propcommutators}
Let $\mu,\nu$ be holomorphic $1$-forms on $\mathcal{A}$. We have:
\begin{eqnarray}
\left[ X_\mu , X_\nu \right] &=& 0, \label{equcomm1} \\
\left[\xi , X_\mu\right] &=& X_\tau, \label{equcomm2}
\end{eqnarray}
where $\tau = \mathcal{L}_{\xi^\mathcal{A}}(\mu) - \mu$.
\end{proposition}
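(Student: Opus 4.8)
The plan is to verify both identities by contracting each bracket with the holomorphic symplectic form $\Omega_I$ and invoking its non-degeneracy, using throughout the Cartan-calculus identity $i_{[X,Y]}\Omega_I = \mathcal{L}_X(i_Y\Omega_I) - i_Y(\mathcal{L}_X\Omega_I)$ together with the closedness of $\Omega_I$. Two geometric inputs drive the argument. First, each $X_\mu$ is \emph{vertical}: for $v \in \ker h_*$ we have $\Omega_I(X_\mu, v) = (h^*\mu)(v) = \mu(h_* v) = 0$, so $X_\mu$ lies in the symplectic orthogonal of the fibre tangent space; since the Hitchin fibration is Lagrangian on $\mathcal{M}_{n,d}^{\rm reg}$, that orthogonal is exactly $\ker h_*$, whence $h_* X_\mu = 0$. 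Second, the $\mathbb{C}^*$-action scales $\Omega_I$: differentiating the flow gives $(m_\lambda)_*(\dot{A}, \dot{\Phi}) = (\dot{A}, \lambda\dot{\Phi})$, so the explicit formula \eqref{equomegai1} yields $m_\lambda^*\Omega_I = \lambda\,\Omega_I$ and hence $\mathcal{L}_\xi\Omega_I = \Omega_I$.

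For \eqref{equcomm1} I expand $i_{[X_\mu, X_\nu]}\Omega_I = \mathcal{L}_{X_\mu}(h^*\nu) - i_{X_\nu}(\mathcal{L}_{X_\mu}\Omega_I)$. Closedness gives $\mathcal{L}_{X_\mu}\Omega_I = d(h^*\mu) = h^*(d\mu)$, and Cartan's formula gives $\mathcal{L}_{X_\mu}(h^*\nu) = d\bigl(\nu(h_* X_\mu)\bigr) + i_{X_\mu}(h^* d\nu)$. By verticality each surviving term is a contraction of a pullback form involving $h_* X_\mu = 0$ or $h_* X_\nu = 0$, so all three terms vanish; non-degeneracy of $\Omega_I$ then forces $[X_\mu, X_\nu] = 0$.

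For \eqref{equcomm2} I expand $i_{[\xi, X_\mu]}\Omega_I = \mathcal{L}_\xi(h^*\mu) - i_{X_\mu}(\mathcal{L}_\xi\Omega_I)$. The second input makes the last term $i_{X_\mu}\Omega_I = h^*\mu$. For the first term, the equivariance $h \circ m_{e^t} = m^{\mathcal{A}}_{e^t} \circ h$ of the Hitchin map gives $\mathcal{L}_\xi(h^*\mu) = h^*(\mathcal{L}_{\xi^{\mathcal{A}}}\mu)$. Combining, $i_{[\xi, X_\mu]}\Omega_I = h^*(\mathcal{L}_{\xi^{\mathcal{A}}}\mu - \mu) = h^*\tau = i_{X_\tau}\Omega_I$, and non-degeneracy yields $[\xi, X_\mu] = X_\tau$ with $\tau = \mathcal{L}_{\xi^{\mathcal{A}}}\mu - \mu$. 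All computations take place on $\mathcal{M}_{n,d}^{\rm reg}$ where $\Omega_I$ is non-degenerate and the fibres are Lagrangian, and the resulting identities of holomorphic vector fields extend by density.

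The only step that is not formal is the second geometric input, $\mathcal{L}_\xi\Omega_I = \Omega_I$, which I expect to be the main point requiring care: it rests on unwinding the deformation-theoretic description of $\Omega_I$ in \eqref{equomegai1} and checking that the differential of $m_\lambda$ scales the $\dot{\Phi}$-component by $\lambda$ while fixing $\dot{A}$. Once this and the verticality of the $X_\mu$ are in hand, both brackets follow purely formally from Cartan calculus and the equivariance of $h$.
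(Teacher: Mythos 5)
Your proof is correct and follows essentially the same route as the paper's: both contract the brackets against $\Omega_I$ via the Cartan identity $i_{[X,Y]}\Omega_I = \mathcal{L}_X(i_Y\Omega_I) - i_Y(\mathcal{L}_X\Omega_I)$, use the verticality $h_*X_\mu = 0$ together with $\mathcal{L}_\xi\Omega_I = \Omega_I$ (coming from $m_\lambda^*\Omega_I = \lambda\Omega_I$), and conclude by non-degeneracy of $\Omega_I$. The additional justifications you supply --- deriving verticality from the Lagrangian fibration and the scaling of $\Omega_I$ from Equation~(\ref{equomegai1}) --- are facts the paper simply invokes, so your write-up is the same argument with slightly more detail.
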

\begin{proof}
Starting with the identity $\mathcal{L}_{X_\mu} \left( i_{X_\nu} \Omega_I \right) = i_{[X_\mu , X_\nu]} \Omega_I + i_{X_\nu} \mathcal{L}_{X_\mu} \Omega_I$, we have
\begin{equation*}
\begin{aligned}
i_{[X_\mu , X_\nu]} \Omega_I &= \mathcal{L}_{X_\mu} \left( i_{X_\nu} \Omega_I \right) - i_{X_\nu} \mathcal{L}_{X_\mu} \Omega_I \\
&= \mathcal{L}_{X_\mu} \left( h^*\nu \right) - i_{X_\nu} \left( d h^*\mu \right) \\
&= i_{X_\mu} \left( h^*(d\nu) \right) - i_{X_\nu} \left( h^*( d\mu ) \right ) \\
&= 0,
\end{aligned}
\end{equation*}
where we have used $h_* X_\mu = h_* X_\nu = 0$. This proves (\ref{equcomm1}).\\

Observe that $m_\lambda^* \Omega_I = \lambda \Omega_I$, for any $\lambda \in \mathbb{C}^*$. Thus $\mathcal{L}_\xi \Omega_I = \Omega_I$. Consider now the following computation:
\begin{equation*}
\begin{aligned}
i_{[\xi,X_\mu]} \Omega_I &= \mathcal{L}_\xi ( i_{X_\mu} \Omega_I ) - i_{X_\mu} \mathcal{L}_\xi \Omega_I \\
&= \mathcal{L}_\xi ( h^*(\mu) ) - i_{X_\mu} \Omega_I \\
&= h^*( \mathcal{L}_{\xi^\mathcal{A}}(\mu) ) - h^*(\mu) \\
&= h^*( \mathcal{L}_{\xi^\mathcal{A}} (\mu) - \mu  ).
\end{aligned}
\end{equation*}
This proves (\ref{equcomm2}), where $\tau = \mathcal{L}_{\xi^\mathcal{A}} (\mu) - \mu$.
\end{proof}

\begin{corollary}\label{corapush}
Let $\mu$ be a holomorphic $1$-form on $\mathcal{A}$. Then
\begin{equation*}
(e^{X_\mu})_* \xi = \xi + X_\tau,
\end{equation*}
where $\tau = \mathcal{L}_{\xi^\mathcal{A}} (\mu) - \mu$.
\end{corollary}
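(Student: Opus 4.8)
The plan is to deduce this as an immediate consequence of the two commutator identities established in Proposition \ref{propcommutators}, using the standard relationship between the pushforward of a vector field by a flow and the adjoint action of the generating field. Writing $\phi_t = e^{tX_\mu}$ for the flow of $X_\mu$, I would first recall that for any vector field $Y$ one has $\tfrac{d}{dt}(\phi_t)_* Y = -(\phi_t)_*[X_\mu , Y]$; this follows by differentiating the relation $(\phi_t)_* = (\phi_{-t})^*$ together with the defining property $\tfrac{d}{dt}(\phi_t)^* Y = (\phi_t)^* \mathcal{L}_{X_\mu} Y$ of the Lie derivative.

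Next I would specialise to $Y = \xi$ and compute the bracket. Identity (\ref{equcomm2}) gives $[\xi , X_\mu] = X_\tau$ with $\tau = \mathcal{L}_{\xi^\mathcal{A}}(\mu) - \mu$, hence $[X_\mu , \xi] = -X_\tau$, and substituting into the evolution equation yields $\tfrac{d}{dt}(\phi_t)_* \xi = (\phi_t)_* X_\tau$. The key step is then to observe that this right-hand side is independent of $t$: identity (\ref{equcomm1}) gives $[X_\mu , X_\tau] = 0$, so $X_\tau$ is invariant under the flow of $X_\mu$, i.e. $(\phi_t)_* X_\tau = X_\tau$ for all $t$. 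Thus $\tfrac{d}{dt}(\phi_t)_* \xi = X_\tau$ is constant in $t$, and integrating from $t=0$ to $t=1$ gives $(e^{X_\mu})_* \xi = \xi + X_\tau$, as claimed.

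Equivalently, one may expand $(e^{X_\mu})_* \xi = \sum_{k \ge 0} \tfrac{(-1)^k}{k!}(\operatorname{ad}_{X_\mu})^k \xi$ and note that the series truncates after the linear term, precisely because $(\operatorname{ad}_{X_\mu})^2 \xi = [X_\mu , -X_\tau] = 0$; the surviving $k=0$ and $k=1$ terms give exactly $\xi + X_\tau$.

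Since the statement is a direct corollary of Proposition \ref{propcommutators}, I do not expect any serious obstacle. The only point requiring care is that all the flows involved genuinely integrate to automorphisms of $\mathcal{M}_{n,d}^{\rm sm}$, so that the pushforwards are well-defined as global objects; but this has already been arranged, via Remark \ref{remhamflow} for $e^{X_\mu}$ and via the $\mathbb{C}^*$-action for the flow of $\xi$. Keeping track of the signs in the identity $(\phi_t)_* = (\phi_{-t})^*$ is the only other place where a slip could occur.
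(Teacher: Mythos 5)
Your proof is correct and follows essentially the same route as the paper: the paper sets $\xi_t = (e^{tX_\mu})_*\xi$, computes $\tfrac{d\xi_t}{dt} = [\xi, X_\mu] = X_\tau$ via Proposition \ref{propcommutators}, and integrates. In fact you are slightly more careful than the paper's one-line argument, since you justify why the derivative is constant in $t$ (namely $(e^{tX_\mu})_* X_\tau = X_\tau$, which follows from the identity $[X_\mu, X_\tau] = 0$ of Equation (\ref{equcomm1})), a point the paper leaves implicit.
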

\begin{proof}
Let $\xi_t = (e^{tX_\mu})_* \xi$. Then $\xi_0 = \xi$ and $\tfrac{d\xi_t}{dt} = [\xi , X_\mu] = X_\tau$, by Proposition \ref{propcommutators}. The result follows by integration.
\end{proof}

\begin{corollary}\label{corinjects}
Let $\mu$ be a holomorphic $1$-form on $\mathcal{A}$. The automorphism $e^{X_\mu} : \mathcal{M}_{n,d} \to \mathcal{M}_{n,d}$ commutes with the $\mathbb{C}^*$-action of and only if $\mu = 0$. In particular $e^{X_\mu}$ is the identity if and only if $\mu = 0$.
\end{corollary}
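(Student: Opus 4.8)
The plan is to reduce the commutation condition to a single linear equation for $\mu$ and then exploit the weight grading on the Hitchin base $\mathcal{A}$. First I would observe that, because $e^{X_\mu}$ is biholomorphic, it commutes with the entire $\mathbb{C}^*$-action if and only if it commutes with the flow of the real generator $\xi$ of $\mathbb{R}_+ \subset \mathbb{C}^*$, that is, if and only if $(e^{X_\mu})_* \xi = \xi$. Indeed, $(e^{X_\mu})_*\xi = \xi$ is exactly commutation with the one-parameter group $m_{e^t}$; applying the complex structure $I$, which is preserved since $e^{X_\mu}$ is holomorphic, yields $(e^{X_\mu})_*(I\xi) = I\,(e^{X_\mu})_*\xi = I\xi$, hence commutation with the flow of $I\xi$, which generates the factor $U(1)$. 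As $\mathbb{C}^* = \mathbb{R}_+ \times U(1)$, commutation with both generators gives commutation with the full action, and the converse is obvious.

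Next I would invoke Corollary \ref{corapush}, which gives $(e^{X_\mu})_* \xi = \xi + X_\tau$ with $\tau = \mathcal{L}_{\xi^\mathcal{A}}(\mu) - \mu$. Thus $e^{X_\mu}$ commutes with the $\mathbb{C}^*$-action precisely when $X_\tau = 0$. Since $i_{X_\tau}\Omega_I = h^*\tau$ and $\Omega_I$ is nondegenerate, $X_\tau = 0$ forces $h^*\tau = 0$; because $h$ is a submersion on the dense open set $\mathcal{M}_{n,d}^{\rm reg}$, the pullback $h^*$ is injective on holomorphic $1$-forms, so $\tau = 0$. Hence everything reduces to showing that the only holomorphic $1$-form $\mu$ on $\mathcal{A}$ satisfying $\mathcal{L}_{\xi^\mathcal{A}}(\mu) = \mu$ is $\mu = 0$.

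This last step is the crux, and it is where the weight structure of $\mathcal{A}$ enters. Under the $\mathbb{C}^*$-action the summand $H^0(\Sigma, K^j)$ of $\mathcal{A} = \bigoplus_{j=2}^n H^0(\Sigma, K^j)$ has weight $j$, so every linear coordinate $x_\alpha$ on $\mathcal{A}$ has weight $w_\alpha \in \{2, \dots, n\}$ and $\mathcal{L}_{\xi^\mathcal{A}} dx_\alpha = w_\alpha\, dx_\alpha$. Writing $\mu = \sum_\alpha g_\alpha\, dx_\alpha$ with $g_\alpha$ entire and grouping the Taylor expansions of the $g_\alpha$ into weighted-homogeneous pieces, one obtains a decomposition $\mu = \sum_k \mu_k$ into $\mathcal{L}_{\xi^\mathcal{A}}$-eigenforms, where a term $x^m\, dx_\alpha$ has eigenvalue $|m| + w_\alpha \ge 0 + 2 = 2$, with $|m| \ge 0$ the weight of the monomial $x^m$. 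Therefore $\mathcal{L}_{\xi^\mathcal{A}}$ admits no eigenvalue $1$ on holomorphic $1$-forms, so $\mathcal{L}_{\xi^\mathcal{A}}(\mu) = \mu$ forces $\mu_k = 0$ for every $k$, i.e. $\mu = 0$. The converse is immediate: $\mu = 0$ gives $X_\mu = 0$ and $e^{X_\mu} = \mathrm{id}$. The final assertion follows formally, since if $e^{X_\mu}$ is the identity it commutes with the $\mathbb{C}^*$-action, whence $\mu = 0$. I expect the spectral-gap observation of this paragraph---that the weights of $\mathcal{L}_{\xi^\mathcal{A}}$ on holomorphic $1$-forms are all at least $2$---to be the decisive point, the earlier reductions being formal once Corollary \ref{corapush} and the injectivity of $\mu \mapsto X_\mu$ are available.
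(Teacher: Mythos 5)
Your proof is correct and follows essentially the same route as the paper: reduce via Corollary \ref{corapush} to the eigenvalue equation $\mathcal{L}_{\xi^\mathcal{A}}(\mu) = \mu$, then rule out nonzero solutions. The only difference is that you spell out the steps the paper compresses --- the reduction to $(e^{X_\mu})_*\xi = \xi$, the injectivity of $\tau \mapsto X_\tau$, and the weight-$\ge 2$ argument behind the paper's ``it is easy to see that the only invariant $1$-form is $\mu = 0$'' --- all of which are sound.
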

\begin{proof}
As the $\mathbb{C}^*$-action is generated by $\xi$, we have that $e^{X_\mu}$ commutes with this action if and only if $(e^{X_\mu})_* \xi = \xi$. By Corollary \ref{corapush}, this happens if and only if $\mathcal{L}_{\xi^\mathcal{A}}(\mu) = \mu$, that is, $\mu$ is invariant under the $\mathbb{C}^*$-action on $\mathcal{A}$. It is easy to see that the only invariant $1$-form is $\mu = 0$. The second part of the Corollary follows, since the identity commutes with the $\mathbb{C}^*$-action.
\end{proof}

Let $H^0(\mathcal{A} , \Omega^1(\mathcal{A})   )$ be the space of holomorphic $1$-forms on $\mathcal{A}$, which is an abelian group under addition. Consider the map $e^{X_{(.)} } : H^0(\mathcal{A} , \Omega^1(\mathcal{A}) ) \to Aut( \mathcal{M}_{n,d})$ sending $\mu$ to $e^{X_\mu}$. Equation (\ref{equcomm1}) implies that $e^{X_\mu} e^{X_\nu} = e^{X_\mu + X_\nu} = e^{X_{\mu + \nu} }$, so this map is a group homomorphism. Moreover, Corollary \ref{corinjects} implies that the map is injective. Thus we have identified $H^0(\mathcal{A},\Omega^1(\mathcal{A}))$ as a subgroup of $Aut( \mathcal{M}_{n,d})$. We will denote the image of $H^0(\mathcal{A} , \Omega^1(\mathcal{A}))$ in $Aut( \mathcal{M}_{n,d})$ by $Vert_0(\mathcal{M}_{n,d})$ and call it the group of vertical translations of $\mathcal{M}_{n,d}$ connected to the identity. Indeed, on any non-singular fibre $h^{-1}(b)$ of $\mathcal{M}_{n,d}$, an element $e^{X_\mu}$ of this group acts as a translation in $h^{-1}(b)$.

\section{Proof of the main theorem}\label{secpmt}

\begin{lemma}\label{lemsolve}
For every holomorphic $1$-form $\mu$ on $\mathcal{A}$ there is a unique holomorphic $1$-form $\nu$ on $\mathcal{A}$ satisfying
\begin{equation*}
\mathcal{L}_{\xi^\mathcal{A}}(\nu) - \nu = \mu.
\end{equation*}
\end{lemma}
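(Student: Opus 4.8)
The plan is to solve the equation $\mathcal{L}_{\xi^\mathcal{A}}(\nu) - \nu = \mu$ by exploiting the weight decomposition of $1$-forms on $\mathcal{A}$ induced by the $\mathbb{C}^*$-action. Recall that $\mathcal{A} = \bigoplus_{j=2}^n H^0(\Sigma, K^j)$, where $H^0(\Sigma, K^j)$ carries weight $j$. Choosing linear coordinates adapted to this grading, a holomorphic $1$-form on $\mathcal{A}$ can be expanded as a (convergent) sum of monomial $1$-forms, each of which is a simultaneous eigenvector for the operator $\mathcal{L}_{\xi^\mathcal{A}}$. First I would make this precise: if $z_\alpha$ are linear coordinates on $\mathcal{A}$ with $\xi^\mathcal{A}$-weight $w_\alpha \geq 2$, then a monomial $1$-form $z^\beta \, dz_\alpha$ (with $z^\beta = \prod z_\gamma^{\beta_\gamma}$) is an eigenform of $\mathcal{L}_{\xi^\mathcal{A}}$ with eigenvalue $\langle \beta, w\rangle + w_\alpha$, where $\langle \beta, w\rangle = \sum_\gamma \beta_\gamma w_\gamma \geq 0$.

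The key observation is then that on each such eigenform the operator $\mathcal{L}_{\xi^\mathcal{A}} - 1$ acts as multiplication by the scalar $(\langle \beta, w\rangle + w_\alpha) - 1$. Since every weight $w_\alpha \geq 2$ and $\langle \beta, w \rangle \geq 0$, this eigenvalue satisfies $\langle \beta, w\rangle + w_\alpha - 1 \geq 1 > 0$ and in particular is never zero. Hence on each graded piece the operator $\mathcal{L}_{\xi^\mathcal{A}} - 1$ is invertible, given explicitly by dividing the coefficient of $z^\beta \, dz_\alpha$ by $\langle \beta, w \rangle + w_\alpha - 1$. Writing $\mu = \sum c_{\beta,\alpha}\, z^\beta \, dz_\alpha$, I would define
\begin{equation*}
\nu = \sum \frac{c_{\beta,\alpha}}{\langle \beta, w\rangle + w_\alpha - 1}\, z^\beta\, dz_\alpha,
\end{equation*}
and this formally satisfies $\mathcal{L}_{\xi^\mathcal{A}}(\nu) - \nu = \mu$. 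Uniqueness is immediate from the same computation: the homogeneous equation $\mathcal{L}_{\xi^\mathcal{A}}(\nu) = \nu$ forces every coefficient to vanish, since no eigenvalue equals $1$. This also re-proves the fact already used in Corollary \ref{corinjects} that the only $\mathbb{C}^*$-invariant $1$-form is zero.

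The main obstacle is the analytic point of convergence: I must verify that the formally defined $\nu$ is genuinely a holomorphic $1$-form, i.e.\ that the rescaled power series converges on all of $\mathcal{A}$. This is where the bound $\langle \beta, w\rangle + w_\alpha - 1 \geq 1$ does the real work. Because the denominators are bounded below by $1$, dividing by them does not enlarge the coefficients; concretely $|c_{\beta,\alpha} / (\langle\beta,w\rangle + w_\alpha - 1)| \leq |c_{\beta,\alpha}|$, so the series for $\nu$ is dominated coefficientwise by the series for $\mu$. Since $\mu$ is entire on the finite-dimensional vector space $\mathcal{A}$, its Taylor series converges absolutely and locally uniformly on every polydisc, and the domination ensures the same for $\nu$. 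Thus $\nu$ is holomorphic on all of $\mathcal{A}$, completing both existence and uniqueness.
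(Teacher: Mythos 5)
Your proof is correct and takes essentially the same approach as the paper: the paper performs exactly the same coefficientwise division, with denominator $\sum_j m_j i_j + m_i - 1 \ge 1$ guaranteed by the weights being $\ge 2$, merely packaging the inverse operator as a Cauchy-type integral kernel instead of an explicit power-series domination argument. Its uniqueness step (a $\mathbb{C}^*$-invariant holomorphic $1$-form on $\mathcal{A}$ vanishes) is the same eigenvalue observation you make directly.
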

\begin{proof}
Let $z^1 , \dots , z^r$ be linear coordinates on $\mathcal{A}$ such that $z^i$ has weight $m_i$ with respect to the $\mathbb{C}^*$-action. This means that $\xi^\mathcal{A} = m_1z^1 \frac{\partial}{\partial z^1} + \dots + m_r z^r \frac{\partial}{\partial z^r}$. Moreover, since the subspace $H^0(\Sigma , K^j) \subset \mathcal{A}$ has weight $j$, we see that $m_i \ge 2$ for all $i$. Let $\nu$ be a holomorphic $1$-form on $\mathcal{A}$. Then $\nu = \nu_1(z) dz^1 + \dots + \nu_r(z) dz^r$, where $\nu_1, \dots , \nu_r$ are holomorphic functions on $\mathcal{A}$. Let $\mu = \mathcal{L}_{\xi^\mathcal{A}}(\nu) - \nu$. Then $\mu = \mu_1(z) dz^1 + \dots + \mu_r(z) dz^r$, where
\begin{equation}\label{equcondition}
\mu_i(z) = \xi^\mathcal{A}( \nu_i(z) ) + (m_i-1)\nu_i(z).
\end{equation}
Given $\mu_i(z)$, we wish to find a solution $\nu_i(z)$ to (\ref{equcondition}). For each $i$, consider the function $\nu_i(z)$ defined by
\begin{equation*}
\nu_i(z) = \left( \frac{1}{2\pi i } \right)^r \! \! \int_{|w^1|=\zeta^1} \! \! \! \! \! \! \dots \! \int_{|w^r| = \zeta^r} \! \! \mu_i(w) \! \left( \sum_{I} \frac{1}{( \sum_j m_j i_j + m_i -1)} \left( \frac{z}{w} \right)^I \right) \! \frac{dw^1}{w^1} \dots \frac{dw^r}{w^r},
\end{equation*}
where the sum $\sum_I$ is over multi-indices $I = (i_1 , i_2 , \dots , i_r)$ and for a given $z \in \mathcal{A}$, $\zeta_1 , \dots , \zeta_r$ are chosen large enough that $\sum_I \left( \tfrac{z}{w} \right)^I$ converges absolutely, e.g., $\zeta^i > |z^i|$ suffices. Note crucially that the denominator $(\sum_j m_j i_j + m_i -1)$ is always $\ge 1$ because $m_i \ge 2$ for all $i$. Clearly $\nu_i(z)$ is a globally defined holomorphic function. It is easy to check that $\nu_i(z)$ satisfies Equation (\ref{equcondition}). Thus $\nu = \nu_1(z)dz^1 + \dots + \nu_r(z)dz^r$ is a solution to $\mathcal{L}_{\xi^\mathcal{A}}(\nu) - \nu = \mu$. Uniqueness follows for if $\nu$ is a holomorphic $1$-form with $\mathcal{L}_{\xi^\mathcal{A}}(\nu) - \nu = 0$, then $\nu$ is invariant under the $\mathbb{C}^*$-action and as in the proof of Corollary \ref{corinjects}, this implies $\nu = 0$.
\end{proof}

\begin{proposition}\label{propfactor}
Let $\phi : \mathcal{M}_{n,d} \to \mathcal{M}_{n,d}$. There is a unique holomorphic $1$-form $\nu$ on $\mathcal{A}$ such that the composition $e^{X_\nu} \circ \phi $ commutes with the $\mathbb{C}^*$-action.
\end{proposition}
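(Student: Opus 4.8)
The plan is to study how $\phi$ distorts the generator $\xi$ of the $\mathbb{C}^*$-action and then to undo this distortion with a single vertical flow $e^{X_\nu}$. Since $\phi$ is an automorphism it preserves the smooth locus, so $\phi_*\xi$ is again a holomorphic vector field on $\mathcal{M}_{n,d}^{\rm sm}$. By Theorem \ref{theoremvecreg} we may write $\phi_*\xi = h^*(f)\xi + X_\mu$ for a holomorphic function $f$ and a holomorphic $1$-form $\mu$ on $\mathcal{A}$. As $\phi_*\xi$ generates the $1$-parameter subgroup $\phi\circ m_{e^t}\circ\phi^{-1}$ of $Aut(\mathcal{M}_{n,d})$, Proposition \ref{propintegrate} forces $f$ to be a constant, say $c$, so that
\begin{equation*}
\phi_*\xi = c\,\xi + X_\mu.
\end{equation*}

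The essential point, and the main obstacle, is to prove $c=1$; everything else is a formal manipulation of the commutation relations. Because the global holomorphic functions on $\mathcal{M}_{n,d}$ are exactly the pullbacks $h^*(g)$, $\phi$ descends to a biholomorphism $\overline{\phi}$ of $\mathcal{A}$ with $h\circ\phi = \overline{\phi}\circ h$. Applying $h_*$ to the displayed identity and using $h_*\xi = \xi^{\mathcal{A}}$ and $h_*X_\mu = 0$ gives $\overline{\phi}_*\xi^{\mathcal{A}} = c\,\xi^{\mathcal{A}}$. Now $\xi^{\mathcal{A}} = \sum_i m_i z^i\,\partial/\partial z^i$ has all weights $m_i\ge 2$, so it vanishes only at the origin; since $\overline{\phi}$ is a biholomorphism it carries the zero locus of $\xi^{\mathcal{A}}$ to that of $c\,\xi^{\mathcal{A}}$, forcing $\overline{\phi}(0)=0$. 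Linearising $\overline{\phi}_*\xi^{\mathcal{A}} = c\,\xi^{\mathcal{A}}$ at this fixed point yields $MAM^{-1} = cA$, where $A = \mathrm{diag}(m_i)$ is the linear part of $\xi^{\mathcal{A}}$ and $M = d\overline{\phi}_0$. Taking traces gives $\mathrm{tr}(A) = c\,\mathrm{tr}(A)$, and since $\mathrm{tr}(A) = \sum_{j=2}^n j\dim H^0(\Sigma,K^j) > 0$ we conclude $c=1$.

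With $c=1$ the correction is now formal. For any holomorphic $1$-form $\nu$ on $\mathcal{A}$, Corollary \ref{corapush} gives $(e^{X_\nu})_*\xi = \xi + X_\tau$ with $\tau = \mathcal{L}_{\xi^{\mathcal{A}}}(\nu) - \nu$, while $[X_\nu,X_\mu]=0$ from (\ref{equcomm1}) gives $(e^{X_\nu})_*X_\mu = X_\mu$. Hence
\begin{equation*}
(e^{X_\nu}\circ\phi)_*\xi = (e^{X_\nu})_*(\xi + X_\mu) = \xi + X_{\tau+\mu}.
\end{equation*}
By Lemma \ref{lemsolve} there is a unique $\nu$ with $\mathcal{L}_{\xi^{\mathcal{A}}}(\nu) - \nu = -\mu$, i.e. $\tau = -\mu$, and for this $\nu$ we obtain $(e^{X_\nu}\circ\phi)_*\xi = \xi$. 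Since commuting with the holomorphic $\mathbb{C}^*$-action is equivalent to fixing its generator $\xi$ (equality with the real flow $m_{e^t}$ propagates to all $m_\lambda$, $\lambda\in\mathbb{C}^*$, by analytic continuation), the automorphism $e^{X_\nu}\circ\phi$ commutes with the $\mathbb{C}^*$-action. Finally, if $e^{X_{\nu'}}\circ\phi$ also commuted with the action, then $e^{X_{\nu-\nu'}}$ would commute with the $\mathbb{C}^*$-action, forcing $\nu=\nu'$ by Corollary \ref{corinjects}; this gives uniqueness.
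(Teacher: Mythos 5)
Your proof is correct, and for the crucial normalisation step it takes a genuinely different route from the paper. You obtain constancy of $f$ at the outset by noting that $\phi_*\xi$ integrates the one-parameter subgroup $t \mapsto \phi \circ m_{e^t} \circ \phi^{-1}$ of $Aut(\mathcal{M}_{n,d})$ and invoking Proposition \ref{propintegrate}, and you then force $c=1$ by descending to the base, locating the fixed point $\overline{\phi}(0)=0$, and linearising $\overline{\phi}_*\xi^{\mathcal{A}}=c\,\xi^{\mathcal{A}}$ there: taking traces kills every $c\neq 1$ at once, since $\mathrm{tr}(A)=\sum_{j=2}^n j\dim H^0(\Sigma,K^j)>0$. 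The paper never establishes constancy first: it works with a possibly non-constant $f$, proves $f$ is nowhere vanishing (because an automorphism preserves ${\rm Ker}(h_*)$, the annihilator of the differentials of global holomorphic functions, so $\phi_*\xi$ can never become vertical), corrects by the solution of $\mathcal{L}_{\xi^{\mathcal{A}}}(\nu)-\nu=-\mu/f$, and only then pins down $f$: the corrected map sends $\mathbb{C}^*$-orbits to $\mathbb{C}^*$-orbits, the classification of automorphisms of $\mathbb{C}^*$ gives $\psi_*\xi=\pm\xi$, and the case $\psi_*\xi=-\xi$ is excluded by properness of $h$ (an anti-equivariant homeomorphism would carry a divergent sequence to a convergent one). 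Your trace identity thus replaces both the orbit analysis and the properness argument and is arguably more elementary and self-contained; the paper's approach, on the other hand, needs no fixed point on the base and produces along the way the fact that automorphisms send vertical vector fields to vertical ones, which is reused in Proposition \ref{propsemid}. The remaining steps — the correction via Lemma \ref{lemsolve}, Corollary \ref{corapush} and Equation (\ref{equcomm1}), the passage (which you justify by analytic continuation) from $\psi_*\xi=\xi$ to commutation with all of $\mathbb{C}^*$, and uniqueness via Corollary \ref{corinjects} — coincide with the paper's.
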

\begin{proof}
Let $U \subset \mathcal{M}_{n,d}^{\rm sm}$ be the points of $\mathcal{M}_{n,d}^{\rm sm}$ where $h_*$ is surjective. By Corollary \ref{corcodim}, the complement of $U$ in $\mathcal{M}_{n,d}^{\rm sm}$ has codimension $\ge 2$. For any $m \in \mathcal{M}_{n,d}^{\rm sm}$ let $A_m \subset T_m^* \mathcal{M}_{n,d}^{\rm sm}$ be the subspace spanned by differentials $dg(m)$, where $g$ is a holomorhpic function on $\mathcal{M}_{n,d}$. Since all holomorphic functions on $\mathcal{M}_{n,d}$ are pullbacks from $\mathcal{A}$ it is easy to see that $U$ is precisely the set of $m \in \mathcal{M}_{n,d}^{\rm sm}$ such that $dim(A_m) = \tfrac{1}{2} dim( \mathcal{M}_{n,d})$. Now if $g$ is any holomorphic function on $\mathcal{M}_{n,d}$ then $\phi^*g$ is also holomorphic. It follows that $\phi$ preserves $U$ and sends $A_m$ to $A_{\phi^{-1}(m)}$ under pullback of $1$-forms. But for any $m \in U$, we have that $A_m \subset T_m^* \mathcal{M}_{n,d}^{\rm sm}$ is the annihilator of $Ker(h_*) \subset T_m \mathcal{M}_{n,d}^{\rm sm}$. Thus for any $m \in U$, $\phi$ sends $Ker(h_*)_m$ to $Ker(h_*)_{\phi(m)}$ isomorphically.\\

Let $X$ be the holomorphic vector field on $\mathcal{M}^{\rm sm}_{n,d}$ given by $X = \phi_*\xi$. By Theorem \ref{theoremvecreg}, we may write $X$ in the form $X = h^*(f)\xi + X_\mu$, for a holomorphic function $f$ and holomorphic $1$-form $\mu$ on $\mathcal{A}$. We claim that $f$ is non-vanishing. It suffices to show that $h^*(f)$ is non-vanishing on $U$, for if $h^*(f)$ has a zero then it vanishes on a codimension $1$ subspace, which must therefore meet $U$. If $(h^*f)( \phi(m) ) = 0$, where $m \in U$ then $\phi_*(\xi_m) = X_{\phi(m)} \in Ker(h_*)_{\phi(m)}$. But this is impossible, as $\xi_m$ is not in $Ker(h_*)_m$. Thus $f$ is non-vanishing.\\

Let $\nu$ be the unique solution to $\mathcal{L}_{\xi^\mathcal{A}}(\nu) - \nu = -\mu/f$ guaranteed by Lemma \ref{lemsolve}. We then have:
\begin{equation*}
\begin{aligned}
\left( e^{X_\nu} \circ \phi \right)_* \xi &= \left( e^{X_\nu} \right)_* \phi_* \xi \\
&= \left( e^{X_\nu} \right)_* (h^*(f)\xi + X_\mu) \\
&= \left( e^{X_\nu} \right)_* (h^*(f)\xi) + X_\mu \\
&= h^*(f) \left( e^{X_\nu} \right)_* \xi + X_\mu \\
&= h^*(f) (\xi + X_{-\mu/f}) + X_\mu \\
&= h^*(f)\xi.
\end{aligned}
\end{equation*}
In this computation we have used the fact that $\left( e^{X_\nu} \right)_* X_\mu = X_\mu$, which follows from Equation (\ref{equcomm1}) and $\left(e^{X_\nu} \right)_*( h^*(f) \xi) = h^*(f) \left(e^{X_\nu} \right)_*(\xi)$ which follows from the fact that $h_*(X_\nu) = 0$. What we have shown is that $\psi = e^{X_\nu} \circ \phi$ sends $\mathbb{C}^*$-orbits to $\mathbb{C}^*$-orbits. As in the proof of Proposition \ref{propintegrate}, we deduce that if $p \in \mathcal{M}_{n,d}$ is a point where $\mathbb{C}^*$ acts freely then there is a $c \in \mathbb{C}^*$ such that for every $\lambda \in \mathbb{C}^*$ we have either $\psi( m_\lambda(p) ) = m_{c\lambda}( \psi(p))$, or $ \psi( m_\lambda(p) ) = m_{c\lambda^{-1}}( \psi(p))$. Putting $\lambda = 1$, we see that we must have $c = 1$. Differentiating we find that $\psi_*(\xi_p) = \pm \xi_{\psi(p)}$. Since the orbits where $\mathbb{C}^*$ acts freely are dense we conclude that $\psi_*(\xi) = \pm \xi$, hence $f = \pm 1$.\\

Suppose that $\psi_*(\xi) = -\xi$. Then $\psi : \mathcal{M}_{n,d} \to \mathcal{M}_{n,d}$ is an automorphism with the property that for all $\lambda \in \mathbb{C}^*$:
\begin{equation}\label{equanti}
\psi \circ m_\lambda = m_{\lambda^{-1}} \circ \psi.
\end{equation}
Let $p \in \mathcal{M}_{n,d}$ be any point with $h(p) \neq 0$. Then, since the Hitchin map is proper, the sequence $m_{1/k}( \psi(p)), k = 1,2, \dots$ has a convergent subsequence. On the other hand the sequence $m_k(p)$ does not have a convergent subsequence, so neither does the sequence $\psi( m_k(p) )$, since $\psi$ is a homeomorphism. This contradicts (\ref{equanti}), so we must have $\psi_*(\xi) = \xi$. This means that $\psi = e^{X_\nu} \circ \phi$ commutes with the $\mathbb{C}^*$-action. Uniqueness of $\nu$ follows from Corollary \ref{corinjects}.
\end{proof}

\begin{proposition}\label{propsemid}
Let $Aut_{\mathbb{C}^*}(\mathcal{M}_{n,d})$ be the subgroup of $Aut(\mathcal{M}_{n,d})$ consisting of automorphisms of $\mathcal{M}_{n,d}$ that commute with the $\mathbb{C}^*$-action. We have that $Vert_0(\mathcal{M}_{n,d})$ is a normal subgroup of $Aut(\mathcal{M}_{n,d})$ and that $Aut(\mathcal{M}_{n,d})$ is the semi-direct product:
\begin{equation*}
Aut(\mathcal{M}_{n,d}) = Aut_{\mathbb{C}^*}(\mathcal{M}_{n,d}) \ltimes Vert_0(\mathcal{M}_{n,d}).
\end{equation*}
\end{proposition}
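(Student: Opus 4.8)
The plan is to establish the internal semidirect product by verifying three things in turn: that $Vert_0(\mathcal{M}_{n,d})$ is normal in $Aut(\mathcal{M}_{n,d})$, that $Aut_{\mathbb{C}^*}(\mathcal{M}_{n,d}) \cap Vert_0(\mathcal{M}_{n,d}) = \{1\}$, and that every automorphism is a product of an element of $Aut_{\mathbb{C}^*}(\mathcal{M}_{n,d})$ with one of $Vert_0(\mathcal{M}_{n,d})$. The last two are essentially immediate from what is already proved. Corollary \ref{corinjects} shows that the only element of $Vert_0(\mathcal{M}_{n,d})$ commuting with the $\mathbb{C}^*$-action is the identity, which gives the trivial intersection; and Proposition \ref{propfactor} produces, for any $\phi$, a unique $\nu$ with $e^{X_\nu}\circ\phi \in Aut_{\mathbb{C}^*}(\mathcal{M}_{n,d})$, which will rearrange into a factorization of $\phi$. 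So the real work is the normality.

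To prove normality I fix $\phi \in Aut(\mathcal{M}_{n,d})$ and $e^{X_\mu} \in Vert_0(\mathcal{M}_{n,d})$ and study the conjugate $\phi \circ e^{X_\mu} \circ \phi^{-1}$. Conjugating a flow by an automorphism yields the flow of the pushed-forward vector field, so $\phi \circ e^{tX_\mu}\circ\phi^{-1} = e^{t\,\phi_*X_\mu}$, and it suffices to show that $\phi_* X_\mu$ is again of the form $X_{\mu'}$ for a holomorphic $1$-form $\mu'$ on $\mathcal{A}$. Since every global holomorphic function on $\mathcal{M}_{n,d}$ is a pullback $h^*(g)$, the automorphism $\phi$ descends to an automorphism $\psi$ of $\mathcal{A}$ with $h\circ\phi = \psi\circ h$; differentiating gives $h_*\circ\phi_* = \psi_*\circ h_*$. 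As $X_\mu$ is vertical we have $h_*X_\mu = 0$, whence $h_*(\phi_*X_\mu) = \psi_*(h_*X_\mu) = 0$.

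Now I apply Theorem \ref{theoremvecreg} to the holomorphic vector field $\phi_*X_\mu$ on $\mathcal{M}_{n,d}^{\rm sm}$: we may write $\phi_*X_\mu = h^*(f)\xi + X_{\mu'}$ with $f$ holomorphic and $\mu'$ a holomorphic $1$-form on $\mathcal{A}$. Applying $h_*$ and using $h_*\xi = \xi^\mathcal{A}$ together with $h_*X_{\mu'}=0$ gives $f(b)\,\xi^\mathcal{A}_b = 0$ for every $b$ in the image of the surjective map $h$. Since $\xi^\mathcal{A}$ vanishes only at the origin of $\mathcal{A}$, this forces $f \equiv 0$, so $\phi_*X_\mu = X_{\mu'}$ and hence $\phi\circ e^{X_\mu}\circ\phi^{-1} = e^{X_{\mu'}} \in Vert_0(\mathcal{M}_{n,d})$, proving normality. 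I expect this verification — in particular the step that forces $f=0$ — to be the main obstacle, as the entire decomposition rests on the pushforward of a vertical field staying vertical.

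With normality in hand the proof concludes formally. The trivial intersection from Corollary \ref{corinjects} guarantees uniqueness of factorizations, while Proposition \ref{propfactor} gives $\psi := e^{X_\nu}\circ\phi \in Aut_{\mathbb{C}^*}(\mathcal{M}_{n,d})$, so that $\phi = e^{X_{-\nu}}\circ\psi$; using normality to move the $Vert_0$ factor to the other side shows $Aut(\mathcal{M}_{n,d}) = Aut_{\mathbb{C}^*}(\mathcal{M}_{n,d})\cdot Vert_0(\mathcal{M}_{n,d})$. Normality, trivial intersection, and this product decomposition together are precisely the assertion that $Aut(\mathcal{M}_{n,d})$ is the internal semidirect product $Aut_{\mathbb{C}^*}(\mathcal{M}_{n,d}) \ltimes Vert_0(\mathcal{M}_{n,d})$.
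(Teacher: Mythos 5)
Your proposal is correct and follows essentially the same route as the paper: Proposition \ref{propfactor} and Corollary \ref{corinjects} supply the factorization and the trivial intersection, and normality is reduced to showing that the pushforward $\phi_* X_\mu$ of a vertical field under an arbitrary automorphism is again of the form $X_{\mu'}$. The only cosmetic difference is that you establish verticality of $\phi_* X_\mu$ by descending $\phi$ to an automorphism $\psi$ of $\mathcal{A}$, whereas the paper argues (as in Proposition \ref{propfactor}) via preservation of the annihilator of ${\rm Ker}(h_*)$ spanned by differentials of global holomorphic functions --- both rest on the same fact that all such functions are pullbacks from $\mathcal{A}$ --- and your explicit appeal to Theorem \ref{theoremvecreg} to force $f \equiv 0$ is exactly the step the paper leaves implicit.
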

\begin{proof}
By Proposition \ref{propfactor}, we need only show that $Vert_0(\mathcal{M}_{n,d})$ is a normal subgroup of $Aut(\mathcal{M}_{n,d})$. In fact it is enough to show that if $\phi \in Aut_{\mathbb{C}^*}(\mathcal{M}_{n,d})$ and $e^{X_\mu} \in Vert_0(\mathcal{M}_{n,d})$, then $\phi \circ e^{X_\mu} \circ \phi^{-1} \in Vert_0(\mathcal{M}_{n,d})$. Consider the $1$-parameter subgroup $\phi \circ e^{tX_\mu} \circ \phi^{-1}$. Clearly this subgroup integrates the vector field $Y = \phi_*(X_\mu)$. Therefore it suffices to show that $Y$ is a vertical vector field, that is $h_*(Y) = 0$. Arguing as in the proof of Proposition \ref{propfactor}, we see that any automorphism $\phi$ sends vertical vector fields to vertical vector fields. In particular, $Y$ is vertical and $\phi \circ e^{X_\mu} \circ \phi^{-1} = e^{Y} \in Vert_0(\mathcal{M}_{n,d})$.
\end{proof}

\begin{proposition}\label{propstableendo}
Let $\alpha$ be a holomorphic endomorphism of $T\mathcal{SU}_{n,d}^{\rm s}$, where $\mathcal{SU}_{n,d}^{\rm s} \subseteq \mathcal{SU}_{n,d}$ is the locus of stable bundles. Then $\alpha$ is a constant multiple of the identity.
\end{proposition}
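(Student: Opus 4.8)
The plan is to convert the endomorphism $\alpha$ into a holomorphic vector field on the moduli space and then invoke the classification of holomorphic vector fields from Theorem \ref{theoremvecreg}. Recall the open inclusion $T^*\mathcal{SU}_{n,d}^{\rm s} \subset \mathcal{M}_{n,d}^{\rm sm}$, under which a point is a pair $(E,\Phi)$ with $E$ a stable bundle and $\Phi \in T^*_{[E]}\mathcal{SU}_{n,d}^{\rm s} = H^0(\Sigma, End_0(E)\otimes K)$; write $p : T^*\mathcal{SU}_{n,d}^{\rm s} \to \mathcal{SU}_{n,d}^{\rm s}$ for the projection. The endomorphism $\alpha$ dualizes to a fibrewise linear bundle map $\alpha^t$ of $T^*\mathcal{SU}_{n,d}^{\rm s}$, and I let $V_\alpha$ be the vertical holomorphic vector field on $T^*\mathcal{SU}_{n,d}^{\rm s}$ whose value at $(E,\Phi)$ is the tangent vector to the fibre of $p$ given by $\alpha^t_{[E]}(\Phi)$; equivalently $V_\alpha$ generates the fibrewise flow $\Phi \mapsto e^{t\alpha^t}\Phi$. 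Two features are immediate. First, when $\alpha = Id$ this flow is exactly the fibre scaling $\Phi \mapsto e^t \Phi$, which is the $\mathbb{C}^*$-action, so $V_{Id}$ is the restriction of the generator $\xi$. Second, the assignment $\alpha \mapsto V_\alpha$ is injective, since $V_\alpha$ recovers $\alpha^t_{[E]}$ fibrewise. Moreover, because $V_\alpha$ is linear along the fibres of $p$ and $\xi$ is the Euler field of those same fibres, one has $[\xi, V_\alpha] = 0$.

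Next I would extend $V_\alpha$ to a holomorphic vector field $X$ on $\mathcal{M}_{n,d}^{\rm sm}$ (equivalently on $\mathcal{M}_{n,d}^{\rm reg}$). The complement of $T^*\mathcal{SU}_{n,d}^{\rm s}$ in $\mathcal{M}_{n,d}^{\rm sm}$ is the locus of Higgs bundles whose underlying bundle $E$ fails to be stable, and this is where the main work lies. Granting that this locus has codimension at least $2$ (a Shatz-stratification estimate for the underlying bundle, valid for $g \ge 3$), Hartogs' theorem lets $V_\alpha$ extend across it to a holomorphic vector field $X$ on $\mathcal{M}_{n,d}^{\rm sm}$ with $X = V_\alpha$ on the dense open set $T^*\mathcal{SU}_{n,d}^{\rm s}$. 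Applying Theorem \ref{theoremvecreg}, there are a holomorphic function $f$ and a holomorphic $1$-form $\mu$ on $\mathcal{A}$ with $X = h^*(f)\,\xi + X_\mu$.

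Finally I would feed $[\xi, X] = 0$ into Proposition \ref{propcommutators}. Since $[\xi, h^*(f)\xi] = h^*(\xi^{\mathcal{A}} f)\,\xi$ and $[\xi, X_\mu] = X_\tau$ with $\tau = \mathcal{L}_{\xi^{\mathcal{A}}}(\mu) - \mu$, applying $h_*$ and using the splitting of the exact sequence (\ref{equextang}) shows that the multiple-of-$\xi$ part and the $Ker(h_*)$-valued part vanish separately, so $\xi^{\mathcal{A}} f = 0$ and $\tau = 0$. A $\mathbb{C}^*$-invariant $1$-form is zero (as in Corollary \ref{corinjects}), so $\mu = 0$; and $\xi^{\mathcal{A}} f = 0$ together with the fact that every $\mathbb{C}^*$-orbit closure in $\mathcal{A}$ contains $0$ forces $f$ to be a constant $c$. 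Hence $X = c\,\xi$, so $V_\alpha = c\,\xi = V_{c\,Id}$, and injectivity of $\alpha \mapsto V_\alpha$ gives $\alpha = c\,Id$. The main obstacle is precisely the codimension estimate for the unstable locus used in the extension step; an alternative that reduces the required input is to restrict $V_\alpha$ to $T^*\mathcal{SU}_{n,d}^{\rm s} \cap \mathcal{M}_{n,d}^{\rm reg}$, extend within $\mathcal{M}_{n,d}^{\rm reg}$ across the same locus, and apply Theorem \ref{theoremvecreg} there.
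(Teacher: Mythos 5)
Your main argument is correct, and while it shares its endpoints with the paper's proof (both start from the fibrewise flow generated by $\alpha^t$ on $T^*\mathcal{SU}_{n,d}^{\rm s}$, and both finish with Theorem \ref{theoremvecreg} plus a $\mathbb{C}^*$-weight argument), the middle step is genuinely different. The paper works with the one-parameter group of automorphisms $\phi_s = e^{s\alpha^t}$: following Kouvidakis--Pantev \cite{kp} it shows that $\phi_s$ descends to automorphisms $\psi_s$ of $\mathcal{A}$ commuting with the $\mathbb{C}^*$-action and preserving $\mathcal{D}$, extends $\phi_s$ over $\mathcal{M}_{n,d}^{\rm reg}$ fibre by fibre using the fact that a birational map between abelian varieties extends to an isomorphism, applies Theorem \ref{theoremvecreg} there, and then uses surjectivity of $h|_{T^*\mathcal{SU}_{n,d}^{\rm s}}$ onto $\mathcal{A}$ to extend $f$ and $\mu$ across $\mathcal{D}$ before invoking Corollary \ref{corinjects}. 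You instead extend the generating vector field $V_\alpha$ itself across $\mathcal{M}_{n,d}^{\rm sm} \setminus T^*\mathcal{SU}_{n,d}^{\rm s}$ by Hartogs, which hands you $f$ and $\mu$ on all of $\mathcal{A}$ at once via the ``moreover'' clause of Theorem \ref{theoremvecreg}, and you close with the bracket identities of Proposition \ref{propcommutators} applied to $[\xi,X]=0$. Your route is shorter and bypasses the \cite{kp} descent machinery, but it buys this by assuming that $\mathcal{M}_{n,d}^{\rm sm} \setminus T^*\mathcal{SU}_{n,d}^{\rm s}$ is an analytic subset of codimension at least $2$ (plus Hartogs extension for holomorphic sections of the tangent bundle). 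That assumption is legitimate in context --- the paper itself asserts exactly these facts in the proofs of Lemma \ref{lemsimplyconn} and Theorem \ref{theoremautgroup} --- but in a self-contained account it would need a proof or reference, whereas the paper's route avoids it entirely.

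One genuine caveat: the ``alternative that reduces the required input'' at the end of your proposal does not close as stated. If you only extend $V_\alpha$ within $\mathcal{M}_{n,d}^{\rm reg}$, Theorem \ref{theoremvecreg} produces $f$ and $\mu$ holomorphic merely on $\mathcal{A}^{\rm reg}$, and the conditions $\xi^{\mathcal{A}}f = 0$ and $\mathcal{L}_{\xi^{\mathcal{A}}}\mu = \mu$ on $\mathcal{A}^{\rm reg}$ alone do \emph{not} force $f$ constant and $\mu = 0$: the origin lies in $\mathcal{D}$, so the orbit closures you need escape $\mathcal{A}^{\rm reg}$, and if $\Delta$ is a weighted-homogeneous equation cutting out the irreducible divisor $\mathcal{D}$, then functions such as $P/\Delta^k$ with $P$ weighted-homogeneous of the same weight as $\Delta^k$ are non-constant $\mathbb{C}^*$-invariant holomorphic functions on $\mathcal{A}^{\rm reg}$ (and weight-one holomorphic $1$-forms exist similarly). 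To repair that variant you would still have to extend $f$ and $\mu$ across $\mathcal{D}$, for instance by using that $X = V_\alpha$ is also defined on $T^*\mathcal{SU}_{n,d}^{\rm s}$ and that $h|_{T^*\mathcal{SU}_{n,d}^{\rm s}}$ surjects onto $\mathcal{A}$ --- which is precisely the step the paper's proof performs. So keep your main route, and drop or repair the aside.
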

\begin{proof}
Let $\phi_s$ be the $1$-parameter family of automorphisms of $T^*\mathcal{SU}_{n,d}^{\rm s}$, which acts fibrewise by $(e^{s\alpha^t})$. Here $\alpha^t$ is the endomorphism of the cotangent bundle induced by $\alpha$. We claim that there are automorphisms $\psi_s : \mathcal{A} \to \mathcal{A}$ such that we have a commutative diagram
\begin{equation*}
\xymatrix{
T^*\mathcal{SU}_{n,d}^{\rm s} \ar[r]^{\phi_s} \ar[d]^h & T^*\mathcal{SU}_{n,d}^{\rm s} \ar[d]^h \\
\mathcal{A} \ar[r]^{\psi_s} & \mathcal{A}
}
\end{equation*}
and moreover the $\psi_s$ commute with the $\mathbb{C}^*$-action on $\mathcal{A}$. Indeed this follows by an argument identical to the proof of \cite[Proposition 2.1]{kp}. We have that $\psi_s$ preserves the discriminant locus $\mathcal{D} \subset \mathcal{A}$, by \cite[Proposition 2.2]{kp}. It follows that $\phi_s$ sends $T^*\mathcal{SU}_{n,d}^{\rm s} \cap \mathcal{M}_{n,d}^{\rm reg}$ to itself. For any $b \in \mathcal{A}^{\rm reg}$ we have that $\phi_s$ gives a birational isomorphism between $h^{-1}(b)$ and $h^{-1}( \psi_s(b) )$. This is a birational isomorphism of abelian varieties and it follows that it extends to an isomorphism between $h^{-1}(b)$ and $h^{-1}(\psi_s(b))$. Thus $\phi_s$ extends as a $1$-parameter family of automorphisms of $\mathcal{M}_{n,d}^{\rm reg}$ (c.f., \cite[Page 248]{kp}). By Theorem \ref{theoremvecreg}, we have that the vector field on $\mathcal{M}_{n,d}^{\rm reg}$ generating the $1$-parameter family $\psi_s$ has the form $X = h^*(f) \xi + X_\mu$, where $f$ is a holomorphic function on $\mathcal{A}^{\rm reg}$ and $\mu$ is a holomorphic $1$-form on $\mathcal{A}^{\rm reg}$. Moreover, $\phi_s$ is defined on $T^* \mathcal{SU}_{n,d}^{\rm s}$ and the restriction of the Hitchin map $h|_{T^* \mathcal{SU}_{n,d}^{\rm s}} : T^* \mathcal{SU}_{n,d}^{\rm s} \to \mathcal{A}$ is surjective \cite[Lemma 1.4]{kp}. It follows that $f$ and $\mu$ extend to the whole of $\mathcal{A}$. Then since $\phi_s$ commutes with the $\mathbb{C}^*$-action we can use Corollary \ref{corinjects} to deduce that $\mu = 0$ and $f$ is constant. Thus, since $X$ is a constant multiple of $\xi$ we see that the automorphisms $\phi_s$ are given by the $\mathbb{C}^*$-action and hence $\alpha$ is a multiple of the identity.
\end{proof}

Let $\mathcal{SU}_{n,d}$ be the moduli space of rank $n$, degree $d$, semi-stable bundles with fixed determinant $L_0$. Any automorphism $\phi : \mathcal{SU}_{n,d} \to \mathcal{SU}_{n,d}$ can be differentiated giving an automorphism $\phi_* = (\phi_*)^{-1} : T^* \mathcal{SU}_{n,d}^{\rm sm} \to T^* \mathcal{SU}_{n,d}^{\rm sm}$. It is clear from Theorem \ref{theoremstableauto} that such automorphisms automatically extend to automorphisms of the Higgs bundle moduli space. Let $Aut(\mathcal{SU}_{n,d})$ be the group of automorphisms of $\mathcal{SU}_{n,d}$. We have just argued that $Aut(\mathcal{SU}_{n,d})$ is in a natural way a subgroup of $Aut(\mathcal{M}_{n,d})$.

\begin{theorem}\label{theoremautgroup}
Let $\Sigma$ have genus $g \ge 3$. We have an isomorphism $Aut_{\mathbb{C}^*}(\mathcal{M}_{n,d}) = \mathbb{C}^* \times Aut(\mathcal{SU}_{n,d})$, where the subgroup $\mathbb{C}^* \subset Aut_{\mathbb{C}^*}(\mathcal{M}_{n,d})$ is the usual $\mathbb{C}^*$-action on $\mathcal{M}$. Therefore, using Proposition \ref{propsemid}, we have an isomorphism:
\begin{equation*}
Aut(\mathcal{M}_{n,d}) = \left( \mathbb{C}^* \times Aut(\mathcal{SU}_{n,d}) \right) \ltimes Vert_0( \mathcal{M}_{n,d} ).
\end{equation*}
\end{theorem}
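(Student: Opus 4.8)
The plan is to establish the first isomorphism $Aut_{\mathbb{C}^*}(\mathcal{M}_{n,d}) = \mathbb{C}^* \times Aut(\mathcal{SU}_{n,d})$; the displayed semi-direct product then follows immediately by combining this with the decomposition $Aut(\mathcal{M}_{n,d}) = Aut_{\mathbb{C}^*}(\mathcal{M}_{n,d}) \ltimes Vert_0(\mathcal{M}_{n,d})$ of Proposition \ref{propsemid}. The organizing tool throughout is the downward flow of the $\mathbb{C}^*$-action. Since every weight of the induced action on $\mathcal{A} = \bigoplus_{j=2}^n H^0(\Sigma, K^j)$ is positive, $m^{\mathcal{A}}_\lambda(b) \to 0$ as $\lambda \to 0$, so properness of the Hitchin map guarantees that $r(m) := \lim_{\lambda \to 0} m_\lambda(m)$ exists for every $m$ and defines a continuous retraction of $\mathcal{M}_{n,d}$ onto the $\mathbb{C}^*$-fixed locus $\mathcal{F}$.

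First I would identify the zero section $\mathcal{SU}_{n,d} \subset \mathcal{M}_{n,d}$ intrinsically among the components of $\mathcal{F}$. At a point $(E,0)$ of the zero section the normal directions to $\mathcal{F}$ are the Higgs deformations $H^0(End_0(E) \otimes K)$, all of weight $+1$; hence the attracting basin $r^{-1}(\mathcal{SU}_{n,d})$ has dimension $\dim \mathcal{SU}_{n,d} + \dim \mathcal{SU}_{n,d} = \dim \mathcal{M}_{n,d}$, so it is open and dense, whereas every other fixed component has a basin of strictly smaller dimension. Thus $\mathcal{SU}_{n,d}$ is the unique fixed component with dense attracting basin. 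Any $\phi \in Aut_{\mathbb{C}^*}(\mathcal{M}_{n,d})$ commutes with $r$ and is a homeomorphism, so it permutes the fixed components while preserving the dimensions of their basins, and must therefore carry $\mathcal{SU}_{n,d}$ to itself. Restricting to the zero section, $\phi$ induces an element $g \in Aut(\mathcal{SU}_{n,d})$.

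Next, replacing $\phi$ by $\tilde{g}^{-1} \circ \phi$, where $\tilde{g}$ denotes the canonical extension of $g$ to $\mathcal{M}_{n,d}$ (which, being linear in $\Phi$ by the explicit form in Theorem \ref{theoremstableauto}, commutes with the $\mathbb{C}^*$-action), I may assume $\phi$ fixes the zero section pointwise. I then claim $\phi$ is a fibrewise-linear automorphism of $T^* \mathcal{SU}_{n,d}^{\rm s}$. Indeed, for $(E,\Phi)$ with $E$ stable we have $\phi(m_\lambda(E,\Phi)) = m_\lambda(\phi(E,\Phi))$, and as $\lambda \to 0$ the left side tends to $\phi(E,0) = (E,0)$; hence $\phi(E,\Phi)$ lies in the attracting set of $(E,0)$, which is precisely the cotangent fibre $T^*_{[E]} \mathcal{SU}_{n,d}^{\rm s} = H^0(End_0(E) \otimes K)$. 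So $\phi$ preserves the cotangent fibration over the stable locus and covers the identity on the base. A holomorphic self-map of a vector space that commutes with scalar multiplication and fixes the origin is linear, so $\phi$ is a holomorphic linear isomorphism on each fibre, i.e. an invertible holomorphic bundle endomorphism of $T^* \mathcal{SU}_{n,d}^{\rm s}$. Dualizing produces a holomorphic endomorphism of $T\mathcal{SU}_{n,d}^{\rm s}$, which by Proposition \ref{propstableendo} is a constant $c \in \mathbb{C}^*$ times the identity. Therefore $\phi = m_c$ on $T^* \mathcal{SU}_{n,d}^{\rm s}$, and since this set is dense, $\phi = m_c$ on all of $\mathcal{M}_{n,d}$.

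Finally I would assemble the pieces: every $\phi \in Aut_{\mathbb{C}^*}(\mathcal{M}_{n,d})$ factors as $\phi = m_c \circ \tilde{g}$ with $c \in \mathbb{C}^*$ and $g \in Aut(\mathcal{SU}_{n,d})$. The subgroup $\mathbb{C}^*$ of such $m_c$ is central in $Aut_{\mathbb{C}^*}(\mathcal{M}_{n,d})$ by the very definition of this group, and it meets $Aut(\mathcal{SU}_{n,d})$ trivially, since any $m_c$ lying in the image of $Aut(\mathcal{SU}_{n,d})$ would restrict to the identity on the zero section, forcing the corresponding $g = \mathrm{id}$ and hence $c = 1$. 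The factorization is therefore unique, and $Aut_{\mathbb{C}^*}(\mathcal{M}_{n,d}) = \mathbb{C}^* \times Aut(\mathcal{SU}_{n,d})$. Together with Proposition \ref{propsemid} this gives the stated description of $Aut(\mathcal{M}_{n,d})$. I expect the main obstacle to be the geometry of the middle step: rigorously establishing the Bialynicki--Birula--type picture, in particular that the downward-flow basin of the zero section is dense and that the attracting set of each $(E,0)$ is exactly its cotangent fibre, using only properness of $h$ and the weight structure of the $\mathbb{C}^*$-action. Once this is secured, Proposition \ref{propstableendo} closes the argument at once.
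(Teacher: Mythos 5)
Your proposal has the same skeleton as the paper's proof: show that a $\mathbb{C}^*$-equivariant automorphism $\phi$ preserves the zero section, compose with the canonical extension of the induced element of $Aut(\mathcal{SU}_{n,d})$ to reduce to $\phi|_{\mathcal{SU}_{n,d}} = \mathrm{id}$, show that $\phi$ is then a fibrewise-linear automorphism of $T^*\mathcal{SU}^{\rm s}_{n,d}$ covering the identity, and close with Proposition \ref{propstableendo}; the uniqueness of the factorization $\phi = m_c \circ \tilde{g}$ is also argued the same way. Where you differ is in how the geometric input is secured, and this is where your write-up is incomplete: you route everything through a global Bialynicki--Birula picture (a limit map $r$ defined and continuous on all of $\mathcal{M}_{n,d}$, basin-dimension counts for every fixed component, identification of the attracting set of each $(E,0)$ with its cotangent fibre), and you explicitly defer these claims as ``the main obstacle.'' Two concrete warnings. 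First, the claim that $r$ is a \emph{continuous} retraction is false: BB limit maps are discontinuous across strata (already for the standard action on $\mathbb{P}^1$); fortunately your argument only uses the equivariance $\phi \circ r = r \circ \phi$, which follows from continuity of $\phi$ alone. Second, the dimension comparison with the basins of the other fixed components is both hard (you would need BB structure theory on a non-compact variety that is singular when $\gcd(n,d) \neq 1$) and unnecessary: distinct basins are disjoint, and since the basin of the zero section contains the dense open set $T^*\mathcal{SU}^{\rm s}_{n,d}$, no other basin can be dense.

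The paper shows that every claim you deferred can be bypassed by soft arguments, which is exactly how it fills the gap. It works on the dense open set $U = T^*\mathcal{SU}^{\rm s}_{n,d} \cap \phi(T^*\mathcal{SU}^{\rm s}_{n,d})$, on which both $(E,\Phi)$ and $\phi(E,\Phi) = (F,\Phi')$ have \emph{stable} underlying bundle; then the limits $\lim_{\lambda\to 0}(E,\lambda\Phi) = (E,0)$ and $\lim_{\lambda\to 0}(F,\lambda\Phi') = (F,0)$ exist for trivial reasons (families of semistable Higgs bundles over $\mathbb{C}$, plus separatedness of the moduli space), so continuity of $\phi$ gives $\phi(E,0) = (F,0)$; density of the image of $U$ in $\mathcal{SU}_{n,d}$ together with closedness of $\mathcal{SU}_{n,d}$ in $\mathcal{M}_{n,d}$ then yields $\phi(\mathcal{SU}_{n,d}) \subseteq \mathcal{SU}_{n,d}$, with no knowledge of the other fixed components required. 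Similarly, your key claim that the attracting set of $(E,0)$ is exactly $T^*_{[E]}\mathcal{SU}^{\rm s}_{n,d}$ has an elementary proof: if $m_\lambda(m) \to (E,0)$, then $m_\lambda(m)$ lies in the open, $\mathbb{C}^*$-invariant set $T^*\mathcal{SU}^{\rm s}_{n,d}$ for small $\lambda$, hence so does $m$; writing $m = (F,\Psi)$ with $F$ stable, separatedness forces $F \cong E$. So your outline is correct, but the step you flagged as the obstacle is precisely where the proof lives, and it is better discharged by these continuity and stability arguments than by constructing the BB decomposition.
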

\begin{proof}
It is clear that $\mathbb{C}^* \times Aut(\mathcal{SU}_{n,d}) \subseteq Aut_{\mathbb{C}^*}(\mathcal{M}_{n,d})$, so we only need to show the reverse inclusion $Aut_{\mathbb{C}^*}(\mathcal{M}_{n,d}) \subseteq \mathbb{C}^* \times Aut(\mathcal{SU}_{n,d})$. Let $\phi : \mathcal{M}_{n,d} \to \mathcal{M}_{n,d}$ be an automorphism of $\mathcal{M}_{n,d}$ which commutes with the $\mathbb{C}^*$-action. Let $U \subset \mathcal{M}_{n,d}$ be the open subset $U = T^* \mathcal{SU}_{n,d}^{\rm s} \cap \phi(T^* \mathcal{SU}_{n,d}^{\rm s} )$. Note that the complement $V = \mathcal{M}_{n,d} - T^*\mathcal{SU}_{n,d}^{\rm s}$ is an analytic subset and that $U = \mathcal{M}_{n,d} - (V \cup \phi(V) )$, so $U$ is dense in $\mathcal{M}_{n,d}$. Let $W \subseteq \mathcal{SU}_{n,d}^{\rm s}$ be the image of $U$ under the projection $p : T^* \mathcal{SU}_{n,d}^{\rm s} \to \mathcal{SU}_{n,d}^{\rm s}$. Then $W$ is an open subset since $p$ is an open mapping and it is easy to see that $W$ is dense in $\mathcal{SU}^{\rm s}_{n,d}$, since $U \subseteq T^*\mathcal{SU}^{\rm s}_{n,d}$ is dense in $T^* \mathcal{SU}_{n,d}^{\rm s}$.\\

Let $E \in W$. By definition of $W$ this means that there is a Higgs field $\Phi \in H^0( \Sigma , End_0(E) \otimes K )$, such that $(E,\Phi) \in U$. In turn, this means that $E$ is a stable bundle and $\phi(E,\Phi) = (F , \Phi')$, where $F$ is also stable. By $\mathbb{C}^*$-invariance it follows that $\phi( E , \lambda \Phi ) = (F , \lambda \Phi')$ for all $\lambda \in \mathbb{C}^*$. Taking the limit as $\lambda \to 0$ and using continuity of $\phi$, we get $\phi( E , 0 ) = ( F , 0 )$. If we think of $W$ as a subset of $\mathcal{M}_{n,d}$ by the inclusions $W \subseteq \mathcal{SU}_{n,d} \subset \mathcal{M}_{n,d}$, then we have just shown that $\phi(W) \subseteq \mathcal{SU}_{n,d}$. Then since $\mathcal{SU}_{n,d}$ is closed in $\mathcal{M}_{n,d}$ and since $W$ is dense in $\mathcal{SU}_{n,d}$, we have $\phi( \mathcal{SU}_{n,d} ) \subseteq \mathcal{SU}_{n,d}$ by continuity. This shows that $\phi$ restricts to an automorphism of $\mathcal{SU}_{n,d}$, i.e., there exists $\psi \in Aut( \mathcal{SU}_{n,d} )$ such that $\phi|_{\mathcal{SU}_{n,d}} = \psi$.\\

Let $(E , \Phi ) \in T^* \mathcal{SU}_{n,d}^{\rm s}$ and set $(F , \Phi' ) = \phi(E,\Phi)$. So by $\mathbb{C}^*$-equivariance, $\phi( E , \lambda \Phi ) = ( F , \lambda \Phi')$. In the limit as $\lambda \to 0$, we have by continuity of $\phi$ that $(F , \lambda \Phi' ) \to \phi(E,0) = (\psi(E) , 0 ) \in \mathcal{SU}_{n,d}^{\rm s}$. Then since $T^* \mathcal{SU}_{n,d}^{\rm s}$ is open in $\mathcal{M}_{n,d}$ we have that $( F , \lambda \Phi' ) \in T^* \mathcal{SU}_{n,d}^{\rm s}$, for all small enough $\lambda$. Thus $F$ is stable and $(F , \lambda \Phi')$ is in $T^*  \mathcal{SU}_{n,d}^{\rm s}$ for all $\lambda$. In particular setting $\lambda = 1$, we get that $(F , \Phi') = \phi(E , \Phi) \in T^* \mathcal{SU}_{n,d}^{\rm s}$. This shows that $\phi$ restricts to an automorphism of $T^* \mathcal{SU}_{n,d}^{\rm s}$. Our argument also shows that $p(\phi(m)) = \psi( p(m) )$ for any $m \in T^* \mathcal{SU}_{n,d}^{\rm s}$.\\

Let $\psi_* = (\psi^*)^{-1} : T^* \mathcal{SU}_{n,d}^{\rm s} \to T^* \mathcal{SU}_{n,d}^{\rm s}$ be the automorphism of $T^* \mathcal{SU}_{n,d}^{\rm s}$ obtained by differentiating $\psi$. From Theorem \ref{theoremstableauto}, we see that $\psi_*$ extends to an automorphism of $\mathcal{M}_{n,d}$ which commutes with the $\mathbb{C}^*$-action. Composing $\phi$ with $(\psi_*)^{-1}$, we reduce to the case that $\phi |_{\mathcal{SU}_{n,d}} = id$. So the restriction of $\phi$ to $T^* \mathcal{SU}_{n,d}^{\rm s}$ acts as a fibre preserving automorphism covering the identity. Since $\phi$ commutes with the $\mathbb{C}^*$-action, $\phi$ descends to an automorphism of the projective cotangent bundle of $\mathcal{SU}_{n,d}^{\rm s}$. This shows that $\phi$ acts linearly on the fibres of $T^* \mathcal{SU}_{n,d}^{\rm s}$. It follows from Proposition \ref{propstableendo}, that such an automorphism is given by the $\mathbb{C}^*$-action and the theorem follows.
\end{proof}

\section{Subgroups preserving additional structures}\label{secstructures}


\subsection{Hyper-K\"ahler geometry of the Higgs bundle moduli space}\label{sechyperkg}

As we recall, the moduli space $\mathcal{M}_{n,d}$ carries a natural hyper-K\"ahler structure. To describe this we need to recall that $\mathcal{M}_{n,d}$ can also be viewed as the moduli space of solutions to the Hitchin equations. Let $E$ be a fixed choice of a smooth, rank $n$ degree $d$ complex vector bundle, equip $E$ with a Hermitian metric and let $L_0 = det(E)$ with the induced metric. We let $\mathfrak{sl}(E) = End_0(E)$ be the bundle of trace-free endomorphisms of $E$ and $\mathfrak{su}(E) \subset \mathfrak{sl}(E)$ the bundle of skew-adjoint trace-free endomorphisms of $E$. We let $\Omega^j(\Sigma , \mathfrak{sl}(E))$ denote the space of $j$-form valued sections of $\mathfrak{sl}(E)$. The adjoint map $A \mapsto A^*$ extends to an anti-linear involution $( \, . \, )^* : \Omega^j(\Sigma , \mathfrak{sl}(E) ) \to \Omega^j(\Sigma , \mathfrak{sl}(E) )$ and the Hodge star $\ast$ extends to a linear map $\ast : \Omega^j(\Sigma , \mathfrak{sl}(E) ) \to \Omega^{2-j}( \Sigma , \mathfrak{sl}(E) )$.\\

The complex structure on $\Sigma$ gives $\Sigma$ an orientation. Let $vol_\Sigma$ be a volume form on $\Sigma$ inducing the same orientation and such that $\int_\Sigma vol_\Sigma = 1$. This determines a real valued $L^2$-inner product on $\Omega^*( \Sigma , \mathfrak{sl}(E) )$:
\begin{equation*}
\langle \alpha , \beta \rangle = \frac{1}{2} \int_\Sigma Tr( \alpha^* \wedge \ast \beta ) + Tr( \beta^* \wedge \ast \alpha ).
\end{equation*}
If $\overline{\partial}_E$ is a $\overline{\partial}$-operator on $E$, we let $\nabla_E$ denote the associated Chern connection, the unique unitary connection on $E$ such that $(\nabla_E)^{0,1} = \overline{\partial}_E$ and we let $F_E \in \Omega^2 ( \Sigma , \mathfrak{su}_E )$ be the curvature of $\nabla_E$. Fix a choice of hermitian connection $\nabla_{L_0}$ on $L_0$ with curvature $F_{L_0} = -2\pi i d \, vol_\Sigma$. If $\overline{\partial}_E$ is a holomorphic structure on $E$, we will say that $det(E , \nabla_E) = (L_0,\nabla_{L_0})$ if the connection on $L_0$ induced by $\nabla_E$ equals $\nabla_{L_0}$.\\

Let $(\overline{\partial}_E , \Phi )$ be a pair consisting of a $\overline{\partial}$-operator $\overline{\partial}_E$ on $E$ such that $det(E , \nabla_E) \cong (L_0,\nabla_{L_0})$ and $\Phi$ a section of $\Omega^{1,0}(\Sigma , \mathfrak{sl}(E) )$. The space of such pairs is an affine space modelled on $\Omega^{0,1}(\Sigma , \mathfrak{sl}(E)) \oplus \Omega^{1,0}(\Sigma , \mathfrak{sl}(E))$. The {\em Hitchin equations} for $(\overline{\partial}_E , \Phi )$ are:
\begin{equation*}
\begin{aligned}
F_E + [ \Phi , \Phi^*] &= -2\pi i\mu(E) vol_\Sigma \otimes Id ,\\
\overline{\partial}_E \Phi &= 0.
\end{aligned}
\end{equation*}

Let $\mathcal{M}^{\rm Hit}_{n,L_0,\nabla_{L_0}}$ denote the moduli space of solutions to the Hitchin equations modulo unitary gauge transformations (of rank $n$, with trace-free Higgs field and fixed determinant $(L_0 , \nabla_{L_0}))$. Standard gauge-theoretic constructions give a topology on $\mathcal{M}^{\rm Hit}_{n,L_0,\nabla_{L_0}}$. Observe that if $(E,\Phi)$ is a solution to the Hitchin equations, then $(E,\Phi)$ is a Higgs bundle. Moreover, it can be shown that $(E,\Phi)$ is {\em polystable}, i.e., a direct sum of stable Higgs bundles of the same slope. Since polystable Higgs bundles are semi-stable we have a natural map $\iota : \mathcal{M}^{\rm Hit}_{n,L_0,\nabla_{L_0}} \to \mathcal{M}_{n,L_0}$. A theorem of Hitchin \cite{hit1} and Simpson \cite{sim1} establishes a Hitchin-Kobayashi type correspondence for Higgs bundles. This correspondence states that a Higgs bundle $(E,\Phi)$ is in the image of $\iota$ if and only if it is polystable. But every $S$-equivalence class of Higgs bundles has a unique polystable object, so $\iota$ is a bijection, in fact a homeomorphism.

\begin{remark}
The moduli space $\mathcal{M}^{Hit}_{n,L_0,\nabla_{L_0}}$ essentially depends on $(L_0 , \nabla_{L_0})$ only through the degree $d$ mod $n$. To see this, let $(L , \nabla_L)$ be a line bundle of degree $a$ and let $\nabla_L$ be a connection on $L$ with curvature $F_L = -2\pi i a \, vol_\Sigma$. Tensoring solutions of the Hitchin equations by $(L, \nabla_L)$ produces a commutative square:
\begin{equation*}
\xymatrix{
\mathcal{M}^{\rm Hit}_{n,L_0,\nabla_{L_0}} \ar[d]^-{\iota} \ar[rr]^-{\otimes (L , \nabla_L ) } & & \mathcal{M}^{\rm Hit}_{n,L_0\otimes L^n , \nabla_{L_0} \otimes Id + Id \otimes (\nabla_L)^{\otimes n} } \ar[d]^-{\iota} \\
\mathcal{M}_{n,L_0} \ar[rr]^-{\otimes L } & & \mathcal{M}_{n,L_0 \otimes L^n}
}
\end{equation*}
In a similar manner, one can show that the choice of volume form $vol_\Sigma$ is completely arbitrary. As such we may safely write $\mathcal{M}^{\rm Hit}_{n,d}$ for the moduli space of solutions of the Hitchin equations and observe that we have a homeomorphism $\mathcal{M}^{\rm Hit}_{n,d} \cong \mathcal{M}_{n,d}$.
\end{remark}

One upshot of the identification $\mathcal{M}_{n,d} \cong \mathcal{M}_{n,d}^{\rm Hit}$ is that $\mathcal{M}_{n,d}^{\rm Hit}$ carries a natural hyper-K\"ahler structure. Indeed, $\mathcal{M}_{n,d}^{\rm Hit}$ may be constructed as an infinite dimensional hyper-K\"ahler quotient \cite{hit1}. Let $m = (\overline{\partial}_E , \Phi) \in \mathcal{M}_{n,d}^{\rm Hit}$ be a smooth point. From the hyper-K\"ahler quotient construction, it follows that the tangent space $T_m \mathcal{M}^{\rm Hit}_{n,d}$ can be described in terms of harmonic representatives. Under this identification the tangent space $T_m \mathcal{M}^{\rm Hit}_{n,d}$ is given by pairs $(\dot{A} , \dot{\Phi}) \in \Omega^{0,1}(\Sigma , \mathfrak{sl}(E)) \oplus \Omega^{1,0}(\Sigma , \mathfrak{sl}(E))$ such that:
\begin{equation*}
\begin{aligned}
\overline{\partial}_E \dot{\Phi} + [ \dot{A} , \Phi ] &= 0, \\
\partial_E \dot{A} + [\dot{\Phi} , \Phi^* ] &= 0,
\end{aligned}
\end{equation*}
where $\partial_E$ is the $(1,0)$-part of the Chern connection $\nabla_E$ associated to $\overline{\partial}_E$. The hyper-K\"ahler structure on the smooth locus of $\mathcal{M}^{\rm Hit}_{n,d}$ is given by a metric $g$ and complex structures $I,J,K$ satisfying the quaternionic relations $IJ = K$. In terms of harmonic representatives the metric $g$ is given by:
\begin{equation*}
g( (\dot{A}_1 , \dot{\Phi}_1) , (\dot{A}_2 , \dot{\Phi}_2) ) = \frac{i}{2} \int_\Sigma Tr( \dot{A}^*_1 \wedge \dot{A}_2 + \dot{A}^*_2 \wedge \dot{A}_1 ) - Tr( \dot{\Phi}^*_1 \wedge \dot{\Phi}_2 + \dot{\Phi}^*_2 \wedge \dot{\Phi}_1 ),
\end{equation*}
and the complex structures $I,J,K$ by:
\begin{equation*}
\begin{aligned}
I(\dot{A},\dot{\Phi}) = (i\dot{A} , i\dot{\Phi}), && J(\dot{A},\dot{\Phi}) = (i\dot{\Phi}^* , -i\dot{A}^*), && K(\dot{A},\dot{\Phi})  = (-\dot{\Phi}^* , \dot{A}^*).
\end{aligned}
\end{equation*}
Note that $I$ is just the natural complex structure on $\mathcal{M}_{n,d}$ as introduced in Section \ref{sechiggsbundles}. Let $\omega_I , \omega_J , \omega_K$ denote the associated K\"ahler forms:
\begin{equation*}
\begin{aligned}
\omega_I(X,Y) = g(IX,Y), && \omega_J(X,Y) = g(JX,Y), && \omega_K(KX,Y) = g(KX,Y).
\end{aligned}
\end{equation*}
We also define complex $2$-forms $\Omega_I,\Omega_J,\Omega_K$ by:
\begin{equation*}
\begin{aligned}
\Omega_I = \omega_J + i\omega_K, && \Omega_J = \omega_K + i\omega_I, && \Omega_K = \omega_I + i\omega_J.
\end{aligned}
\end{equation*}
Then $\Omega_I$ is a closed complex symplectic $2$-form of type $(2,0)$ with respect to $I$ and similarly for $\Omega_J,\Omega_K$. Note that this definition of $\Omega_I$ agrees with our previous definition, Equation (\ref{equomegai1}).


\subsection{Symmetry groups}\label{secsymgp}

Our goal in this section is to determine the subgroups given in Definition \ref{defautgroups0}.

\begin{lemma}\label{lemsimplyconn}
Suppose $g \ge 3$. Then $\mathcal{M}_{n,d}^{\rm sm}$ is simply-connected.
\end{lemma}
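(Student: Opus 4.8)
The plan is to compare $\mathcal{M}_{n,d}^{\rm sm}$ with the moduli space of bundles $\mathcal{SU}_{n,d}^{\rm sm}$, whose simple-connectivity is classical (Narasimhan--Seshadri, Atiyah--Bott), and to show that the extra Higgs directions create no new loops. Concretely, I would use the dense open inclusion $T^*\mathcal{SU}_{n,d}^{\rm sm}\subset\mathcal{M}_{n,d}^{\rm sm}$ recorded in \textsection\ref{sechbms}. The cotangent bundle deformation retracts onto its zero section, so $\pi_1(T^*\mathcal{SU}_{n,d}^{\rm sm})\cong\pi_1(\mathcal{SU}_{n,d}^{\rm sm})=1$, and it then suffices to show that this inclusion induces an isomorphism on $\pi_1$. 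Since the complement $Z=\mathcal{M}_{n,d}^{\rm sm}\setminus T^*\mathcal{SU}_{n,d}^{\rm sm}$ is a closed complex-analytic subset, the inclusion is a $\pi_1$-isomorphism as soon as $Z$ has complex codimension at least $2$ (real codimension $\ge 4$), since removing a real-codimension $\ge 3$ subset does not alter $\pi_1$.

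The heart of the matter is therefore the codimension of $Z$, which is the locus of stable Higgs bundles $(E,\Phi)$ whose underlying bundle $E$ is unstable (or a non-smooth semistable point). I would stratify $Z$ by the Harder--Narasimhan type of $E$ and estimate the dimension of each stratum: fixing the type amounts to choosing the graded pieces together with an extension class, while Higgs-stability forces $\Phi$ to have nonzero off-diagonal blocks mapping out of the Harder--Narasimhan filtration. A parameter count shows these codimensions grow with $g$, and verifying that the minimum of them is $\ge 2$ precisely when $g\ge 3$ is where the genus hypothesis enters; this estimate is the main obstacle.

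An alternative that avoids the explicit count is to run Morse theory for the moment map directly. The function $f=\|\Phi\|^{2}$ appearing implicitly in \textsection\ref{sechyperkg} is the moment map for the Hamiltonian $U(1)$-action $\Phi\mapsto e^{i\theta}\Phi$ relative to $(I,\omega_I)$; it is proper and Morse--Bott \cite{hit1}, and its minimum $f^{-1}(0)$ is exactly the zero section $\mathcal{SU}_{n,d}^{\rm sm}$. For a K\"ahler Hamiltonian circle action every critical submanifold has \emph{even} index, so each non-minimal critical set contributes only cells of dimension $\ge 2$. Hence $\mathcal{M}_{n,d}^{\rm sm}$ has the homotopy type of $\mathcal{SU}_{n,d}^{\rm sm}$ with cells of dimension $\ge 2$ attached, and attaching such cells to a simply-connected space leaves $\pi_1$ trivial. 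Here the points requiring care are the properness of $f$ on the (non-compact, and in the non-coprime case open-in-a-singular-space) smooth locus and the simple-connectivity of $\mathcal{SU}_{n,d}^{\rm sm}$ itself; these are again where the hypothesis $g\ge 3$ does its work.
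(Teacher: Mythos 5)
Your first argument is precisely the paper's proof: the paper deduces $\pi_1(T^*\mathcal{SU}_{n,d}^{\rm sm})=1$ from simple connectivity of the stable locus (citing \cite{bbgn,dn}, plus \cite{nr} for the fact that $\mathcal{SU}_{n,d}^{s}=\mathcal{SU}_{n,d}^{\rm sm}$ when $g\ge 3$, which is one place the genus hypothesis enters), and then concludes because the complement of $T^*\mathcal{SU}_{n,d}^{\rm sm}$ in $\mathcal{M}_{n,d}^{\rm sm}$ has codimension at least $2$ --- a fact the paper asserts without proof, exactly the step you flag as the ``main obstacle.'' So your proposal follows the paper's route (and is complete to the same degree the paper's own proof is); the Morse-theoretic alternative, with the properness issues you note in the non-coprime case, is not what the paper does.
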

\begin{proof}
Let $\mathcal{SU}_{n,d}^s \subset \mathcal{SU}_{n,d}$ be the locus of stable bundles in $\mathcal{SU}_{n,d}$. We have that $\mathcal{SU}_{n,d}^s$ is simply-connected \cite{bbgn,dn}. It is also known that for $g \ge 3$, $\mathcal{SU}_{n,d}^s = \mathcal{SU}_{n,d}^{\rm sm}$ \cite{nr}. In particular, it follows that $T^*\mathcal{SU}_{n,d}^{\rm sm}$ is simply-connected. The lemma follows since the codimension of the complement of $T^*\mathcal{SU}_{n,d}^{\rm sm} \subseteq \mathcal{M}_{n,d}^{\rm sm}$ is at least $2$.
\end{proof}

\begin{lemma}\label{lemkilling}
Let $\mu$ be a holomorphic $1$-form on $\mathcal{A}$ and $X_\mu$ the corresponding holomorphic vector field on $\mathcal{M}_{n,d}$.
\begin{enumerate}
\item{If $X_{\mu}$ preserves $J$ (i.e., $\mathcal{L}_{X_\mu}J = 0$), then $\mu = 0$.}
\item{If $X_{\mu}$ preserves $g$ (i.e., $\mathcal{L}_{X_\mu}g = 0$), then $\mu = 0$.}
\end{enumerate}
\end{lemma}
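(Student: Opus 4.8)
The plan is to show that under either hypothesis the vector field $X_\mu$ must be \emph{parallel} for the Levi-Civita connection $\nabla$ of the hyper-K\"ahler metric $g$, and then to exploit the fact that $X_\mu$ vanishes along the zero section $\{\Phi=0\}=\mathcal{SU}_{n,d}^{\rm s}$ to conclude that $X_\mu\equiv 0$, whence $\mu=0$ by the injectivity in Corollary \ref{corinjects}. Write $\xi$ for the generator of the $\mathbb{R}_+$-action and $\eta=I\xi$ for the generator of the circle $U(1)\subset\mathbb{C}^*$. From $m_\lambda^*\Omega_I=\lambda\Omega_I$ (used in the proof of Proposition \ref{propcommutators}), the holomorphicity of $m_\lambda$, and the hyper-K\"ahler volume identity $\omega_I^{2m}\sim\Omega_I^m\wedge\overline{\Omega}_I^m$, one reads off $m_r^*g=rg$ for $r\in\mathbb{R}_+$ and $m_{e^{i\theta}}^*g=g$. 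Hence $\mathcal{L}_\xi g=g$, and since $\mathcal{L}_\xi\omega_J=\omega_J$ while $\omega_J=g(J\cdot,\cdot)$ we get $\mathcal{L}_\xi J=0$, so $\xi$ preserves $I,J,K$; likewise $\eta$ is Killing ($\mathcal{L}_\eta g=0$) and $\mathcal{L}_\eta\Omega_I=i\Omega_I$ gives $\mathcal{L}_\eta\omega_J=-\omega_K$, hence $\mathcal{L}_\eta J=-K$.

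First I would reduce to the case that $\mu$ is homogeneous of some $\mathbb{C}^*$-weight $w$. Both $\mathcal{L}_{X_\mu}J$ and $\mathcal{L}_{X_\mu}g$ are linear in $\mu$, so I decompose $\mu=\sum_{w\ge 2}\mu^{(w)}$ into weight components; all weights are $\ge 2$ because $H^0(\Sigma,K^j)\subset\mathcal{A}$ has weight $j\ge 2$. From $[\xi,X_{\mu^{(w)}}]=(w-1)X_{\mu^{(w)}}$ (Proposition \ref{propcommutators}) and $\mathcal{L}_\xi J=0$, $\mathcal{L}_\xi g=g$, a short computation gives $\mathcal{L}_\xi(\mathcal{L}_{X_{\mu^{(w)}}}J)=(w-1)\mathcal{L}_{X_{\mu^{(w)}}}J$ and $\mathcal{L}_\xi(\mathcal{L}_{X_{\mu^{(w)}}}g)=w\,\mathcal{L}_{X_{\mu^{(w)}}}g$. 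Since these $\mathcal{L}_\xi$-weights are distinct for distinct $w$, the vanishing of $\mathcal{L}_{X_\mu}J$ (resp.\ $\mathcal{L}_{X_\mu}g$) forces each component to vanish, so I may assume $\mu$ homogeneous with $w\ge 2$, hence $w-1\neq 0$. Because $\xi$ and $X_\mu$ both preserve $I$, the identity $[I\xi,X_\mu]=I[\xi,X_\mu]$ yields $[\eta,X_\mu]=(w-1)\,IX_\mu$.

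The crucial step is to promote the hypothesis from $X_\mu$ to $IX_\mu$. For part (2), $\eta$ is Killing and $X_\mu$ is Killing by assumption, so $[\eta,X_\mu]=(w-1)IX_\mu$ is Killing, hence $IX_\mu$ is Killing. For part (1), applying $\mathcal{L}_\eta$ to $\mathcal{L}_{X_\mu}J=0$ and using $\mathcal{L}_\eta J=-K$ together with $\mathcal{L}_{X_\mu}K=(\mathcal{L}_{X_\mu}I)J+I(\mathcal{L}_{X_\mu}J)=0$ gives $(w-1)\mathcal{L}_{IX_\mu}J=0$, so $IX_\mu$ also preserves $J$. Now set $S=\nabla X_\mu$, viewed as the endomorphism $W\mapsto\nabla_W X_\mu$; since $\nabla I=\nabla J=\nabla K=0$, a vector field $Y$ preserves a complex structure $A$ iff $[\nabla Y,A]=0$, and $Y$ is Killing iff $\nabla Y$ is skew, while $\nabla(IX_\mu)=IS$. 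In part (1), $X_\mu$ preserving $I,J,K$ gives $[S,I]=[S,J]=[S,K]=0$, while $IX_\mu$ preserving $J$ gives $[IS,J]=0$; expanding $ISJ=IJS=KS$ and $JIS=-KS$ shows $2KS=0$, so $S=0$. In part (2), $S$ and $IS$ are both skew and $[S,I]=0$, so $(IS)^*=S^*I^*=(-S)(-I)=SI=IS$, forcing $IS=0$ and again $S=0$.

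In both cases $\nabla X_\mu=0$, so $X_\mu$ is parallel and $|X_\mu|_g$ is constant on the connected manifold $\mathcal{M}_{n,d}^{\rm sm}$ (Lemma \ref{lemsimplyconn}). The explicit description $X_\mu=(\dot{A},0)$ with $\dot{A}=\sum_j\mu_j\big(\Phi^{j-1}-\tfrac{(j-1)b_{j-1}}{n}Id\big)$ from Remark \ref{remhamflow} shows $X_\mu$ vanishes identically along the nonempty zero section $\{\Phi=0\}\subset\mathcal{M}_{n,d}^{\rm sm}$; a parallel field with a zero vanishes everywhere, so $X_\mu\equiv 0$ and therefore $\mu=0$. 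I expect the main obstacle to be the bookkeeping of the $\mathbb{C}^*$-action on the hyper-K\"ahler data (the homothety $\mathcal{L}_\xi g=g$, the isometric circle, and the rotation $\mathcal{L}_\eta J=-K$) and the weight-grading reduction to homogeneous $\mu$; once these are in place, the linear algebra yielding $S=0$ and the parallel-transport conclusion are routine.
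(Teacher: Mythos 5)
Your linear-algebra core is correct and rather elegant: for a parallel tensor $A$ one indeed has $\mathcal{L}_Y A = [A,\nabla Y]$, so the computations $[IS,J]=2KS$ and $(IS)^*=IS$ do force $\nabla X_\mu = 0$ once both $X_\mu$ and $IX_\mu$ satisfy the relevant hypothesis, and a parallel field vanishing along $\{\Phi=0\}$ vanishes identically. The fatal problem is your first step: the $\mathbb{R}_+$-part of the $\mathbb{C}^*$-action is \emph{not} a homothety of the hyper-K\"ahler metric, and it does \emph{not} preserve $J$. Concretely, $m_r$ fixes $\mathcal{SU}^{\rm s}_{n,d}=\{\Phi=0\}$ pointwise, so $(m_r)_*$ is the identity on tangent vectors $(\dot{A},0)$ along this locus; if $m_r^*g=rg$ held, evaluating on such a vector would give $g(\dot{A},\dot{A})=r\,g(\dot{A},\dot{A})$, i.e.\ $g\equiv 0$ on $T\mathcal{SU}^{\rm s}_{n,d}$, which is absurd. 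Hence $\mathcal{L}_\xi g\neq g$; and since $\mathcal{L}_\xi\omega_J=\omega_J$ \emph{is} true (real part of $\mathcal{L}_\xi\Omega_I=\Omega_I$) and $\omega_J=g(J\cdot,\cdot)$, the statements $\mathcal{L}_\xi g=g$ and $\mathcal{L}_\xi J=0$ are equivalent, so $\mathcal{L}_\xi J=0$ fails as well. Your derivation is also a non sequitur: $m_r^*\Omega_I=r\Omega_I$, $I$-holomorphy and the volume identity only control the volume form, not $g$ itself. The flat model $T^*\mathbb{C}$ with $m_r(z,w)=(z,rw)$ illustrates both points: $m_r^*(dz\wedge dw)=r\,dz\wedge dw$, yet $m_r^*\bigl(|dz|^2+|dw|^2\bigr)=|dz|^2+r^2|dw|^2$. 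Only the circle $U(1)\subset\mathbb{C}^*$ acts by isometries (this uses $|e^{i\theta}\Phi|=|\Phi|$ in the Hitchin equations); for $|\lambda|\neq 1$ the action of $m_\lambda$ on $g,J,K$ involves re-solving the equations and admits no such formula.

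This error is not peripheral: the identities $\mathcal{L}_\xi g=g$ and $\mathcal{L}_\xi J=0$ are precisely what you use to reduce to homogeneous $\mu$, and homogeneity is what gives $[\eta,X_\mu]=(w-1)IX_\mu$ with a single weight $w$, which in turn drives the promotion of the hypothesis from $X_\mu$ to $IX_\mu$. Without it, applying $\mathcal{L}_\eta$ only yields statements about $IX_\tau$ with $\tau=\mathcal{L}_{\xi^\mathcal{A}}(\mu)-\mu$ a \emph{different} $1$-form, and the linear algebra no longer closes. A repair along your lines looks plausible using only the honest circle isometry: pull the hypothesis back by $m_{e^{i\theta}}$, use $m_{e^{i\theta}}^*X_{\mu^{(w)}}=e^{i(w-1)\theta}X_{\mu^{(w)}}$, and Fourier-decompose in $\theta$; in case (2) this gives that each $X_{\mu^{(w)}}$ and each $IX_{\mu^{(w)}}$ is Killing, after which your $S=\nabla X_\mu$ argument and parallel-field endgame finish. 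But that is a different proof from the one you wrote, and case (1) needs extra care since $m_{e^{i\theta}}^*J$ is a rotation of $J,K$ and mixes Fourier modes. For comparison, the paper's proof avoids the Levi-Civita connection altogether: it shows (1) implies (2) via the orthogonal splitting $T\mathcal{M}^{\rm reg}_{n,d}=\ker h_*\oplus J\ker h_*$, the identity $g=-\mathrm{Re}\,\Omega_I(J\cdot,\cdot)$ and $\mathcal{L}_{X_\mu}\Omega_I=h^*(d\mu)$; then in case (2) a Killing, $I$-holomorphic $X_\mu$ preserves $\omega_I$, hence is Hamiltonian by simple connectedness (Lemma \ref{lemsimplyconn}), whereas on each nonsingular fibre $X_\mu$ acts by translations of an abelian variety, which are never Hamiltonian for a K\"ahler form; so $X_\mu$ vanishes on all regular fibres and $\mu=0$.
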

\begin{proof}
We first show that condition (1) implies condition (2), that is, if $X_\mu$ preserves $J$ then it also preserves $g$. To see this suppose that $X_\mu$ preserves $J$. Over $\mathcal{M}_{n,d}^{\rm reg}$ we have an orthogonal decomposition $T\mathcal{M}_{n,d}^{\rm reg} = {\rm Ker}(h_*) \oplus J \,{\rm Ker}(h_*)$. Let $V = {\rm Ker}(h_*)$ and $H = JV$. Since $X_\mu$ preserves the subbundle $V$, it must also preserve $H$. It follows that $\mathcal{L}_{X_\mu} g$ is a section of $S^2(V^*) \oplus S^2(H^*)$. On the other hand observe that $\Omega_I(J X , Y) = \omega_J(JX,Y) + i\omega_K(JX,Y) = -g(X,Y) -i\omega_I(X,Y)$. So for any real vector fields $X,Y$ we have $g(X,Y) = -Re( \Omega_I(JX,Y) )$. However we also see that
\begin{equation*}
\begin{aligned}
\mathcal{L}_{X_\mu} \Omega_I &= d i_{X_\mu} \Omega_I + i_{X_\mu} d \Omega_I \\
&= d h^*(\mu) = h^*( d\mu ).
\end{aligned}
\end{equation*}
So if $X_\mu$ preserves $J$, then for all $X,Y$ we have:
\begin{equation*}
\mathcal{L}_{X_\mu}g(X,Y)  = -Re( h^*(d\mu) ( JX , Y ) ).
\end{equation*}
However the right hand side vanishes whenever $X$ and $Y$ are either both vertical or both horizontal. So this equality is only possible if both sides vanish and hence $X_\mu$ preserves $g$.\\

Now suppose that $X_\mu$ preserves $g$. Clearly this implies that $X_\mu$ preserves $\omega_I$, so $ i_{X_\mu} \omega_I $ is a closed $1$-form on $\mathcal{M}_{n,d}^{\rm sm}$. By Lemma \ref{lemsimplyconn}, $\mathcal{M}_{n,d}^{\rm sm}$ is simply-connected so there is a smooth function $g$ on $\mathcal{M}_{n,d}^{\rm sm}$ such that $i_{X_\mu} \omega_I = dg$. On the other hand, given a non-singular fibre $h^{-1}(b) \subset \mathcal{M}_{n,d}^{\rm reg}$, we have that $\omega_I$ restricts to a K\"ahler form on $h^{-1}(b)$ and that the flow of $X_\mu$ on $h^{-1}(b)$ is given by translations. As is well known, the action of a complex torus on itself by translation is not Hamiltonian. Hence we have a contradiction unless $X_\mu$ vanishes on $h^{-1}(b)$. Since $h^{-1}(b)$ was an arbitrary non-singular fibre this shows that $X_\mu = 0$ and hence $\mu = 0$.
\end{proof}

\begin{corollary}\label{corkilling}
Let $Y = f\xi + X_\mu$ be a holomorphic vector field on $\mathcal{M}_{n,d}^{\rm sm}$, where $f \in \mathbb{C}$ is a constant and $\mu$ is a holomorphic $1$-form on $\mathcal{A}$. 
\begin{enumerate}
\item{If $Y$ preserves $J$, then $\mu = 0$ and $f \in \mathbb{R}$.}
\item{If $Y$ preserves $g$, then $\mu = 0$ and $f \in i\mathbb{R}$.}
\end{enumerate}
\end{corollary}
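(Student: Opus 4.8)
The plan is to separate $Y$ into its scaling and circle parts and reduce to Lemma~\ref{lemkilling}. Writing $f=a+ib$ with $a,b\in\mathbb{R}$, the field $f\xi$ is the fundamental vector field of $f\in\mathrm{Lie}(\mathbb{C}^*)=\mathbb{C}$, so $f\xi=a\,\xi+b\,I\xi$, where $\xi$ generates the scaling $\mathbb{R}_+$ and $I\xi$ generates the circle $U(1)\subset\mathbb{C}^*$. The inputs I will use from the hyper-K\"ahler geometry of Section~\ref{sechyperkg} are: the scaling preserves $J$, i.e.\ $\mathcal{L}_\xi J=0$, and the circle is isometric and rotates the complex structures, $\mathcal{L}_{I\xi}g=0$ and $\mathcal{L}_{I\xi}J=-K$. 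Together with the relations $\mathcal{L}_\xi\Omega_I=\Omega_I$ and $\mathcal{L}_{I\xi}\Omega_I=i\Omega_I$ (the infinitesimal forms of $m_\lambda^*\Omega_I=\lambda\Omega_I$), the identity $g(X,W)=-\mathrm{Re}\,\Omega_I(JX,W)$ from the proof of Lemma~\ref{lemkilling} then forces $\mathcal{L}_\xi g=g$. Finally every field here preserves $I$, and $\mathcal{L}_{X_\mu}\Omega_I=h^*(d\mu)$, so that $\mathcal{L}_Y\Omega_I=f\Omega_I+h^*(d\mu)$.

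For part (1), assume $\mathcal{L}_Y J=0$. Differentiating $g=-\mathrm{Re}\,\Omega_I(J\,\cdot\,,\,\cdot\,)$ along $Y$ and using $\mathcal{L}_Y J=0$ together with $\mathcal{L}_Y\Omega_I=f\Omega_I+h^*(d\mu)$ gives, after expanding $\Omega_I(JX,W)=-g(X,W)-i\,\omega_I(X,W)$,
\[
(\mathcal{L}_Y g)(X,W)=\mathrm{Re}(f)\,g(X,W)-\mathrm{Im}(f)\,\omega_I(X,W)-\mathrm{Re}\big(h^*(d\mu)(JX,W)\big).
\]
Now restrict $X,W$ to the tangent space of a smooth Hitchin fibre $F=h^{-1}(b_0)$. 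Such vectors are vertical, so $JX$ is horizontal and $h^*(d\mu)(JX,W)=d\mu(h_*JX,h_*W)=0$; hence $(\mathcal{L}_Y g)|_{TF}=\mathrm{Re}(f)\,g|_{TF}-\mathrm{Im}(f)\,\omega_I|_{TF}$. The left side is symmetric while $\omega_I|_{TF}$ is antisymmetric and non-degenerate (it is the K\"ahler form of the complex torus $F$), so its coefficient must vanish: $\mathrm{Im}(f)=0$. Thus $f\in\mathbb{R}$, and $X_\mu=Y-\mathrm{Re}(f)\,\xi$ still preserves $J$ because $\mathcal{L}_\xi J=0$; Lemma~\ref{lemkilling}(1) then yields $\mu=0$.

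For part (2), assume $\mathcal{L}_Y g=0$. Since $\mathcal{L}_{I\xi}g=0$ and $\mathcal{L}_\xi g=g$, expanding $Y=a\,\xi+b\,I\xi+X_\mu$ gives $a\,g+\mathcal{L}_{X_\mu}g=0$. As $X_\mu$ preserves $I$ we may insert $I$ in the first slot to obtain $\mathcal{L}_{X_\mu}\omega_I=-a\,\omega_I$, and since $\omega_I$ is closed, Cartan's formula gives $d\big(i_{X_\mu}\omega_I\big)=-a\,\omega_I$. Restricting to a smooth fibre $F=h^{-1}(b_0)$, the right-hand side represents $-a\,[\omega_I|_F]$ in $H^2(F;\mathbb{R})$ while the left-hand side is exact; as $F$ is a complex submanifold for $I$ with non-zero K\"ahler class $[\omega_I|_F]$, we conclude $a=0$. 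Hence $f\in i\mathbb{R}$ and $\mathcal{L}_{X_\mu}g=0$, so Lemma~\ref{lemkilling}(2) gives $\mu=0$.

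The two tensor identities above are routine repackagings of the relations already used for Lemma~\ref{lemkilling}, and the non-degeneracy and cohomology facts about $\omega_I|_F$ reflect that the Hitchin fibres are $I$-complex and Lagrangian for $\Omega_I$. The step that carries the real weight is the pair of Lie-derivative identities $\mathcal{L}_\xi J=0$ and $\mathcal{L}_{I\xi}g=0$ (with $\mathcal{L}_{I\xi}J=-K$): namely that the scaling part of the $\mathbb{C}^*$-action is $J$-holomorphic and the circle part is isometric. I expect extracting these from the explicit formulae for $g,I,J,K$ in Section~\ref{sechyperkg}, together with $m_\lambda^*\Omega_I=\lambda\Omega_I$, to be the main obstacle.
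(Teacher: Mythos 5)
Your proposal takes a genuinely different route from the paper's. The paper proves this corollary by a commutator trick: since $\xi$ (resp.\ $i\xi$) is asserted to preserve $J$ (resp.\ $g$), the bracket $[\xi,Y]$ (resp.\ $[i\xi,Y]$) again preserves $J$ (resp.\ $g$); by Proposition \ref{propcommutators} this bracket is $X_\tau$ with $\tau=\mathcal{L}_{\xi^{\mathcal{A}}}(\mu)-\mu$, Lemma \ref{lemkilling} gives $\tau=0$, hence $\mu=0$ by $\mathbb{C}^*$-invariance (as in Corollary \ref{corinjects}), and only then is $f$ pinned down. You instead pin down $f$ first, by restricting tensor identities to a compact fibre $F=h^{-1}(b_0)$ and comparing symmetric against antisymmetric parts (part (1)), or exact forms against the K\"ahler class (part (2)), and then subtract $f\xi$ and quote Lemma \ref{lemkilling}. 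One piece of this is not only correct but better grounded than the corresponding step in the paper: your derivation of $\mathrm{Im}(f)=0$ in part (1) uses only the hypothesis $\mathcal{L}_YJ=0$, the identity $\mathcal{L}_Y\Omega_I=f\,\Omega_I+h^*(d\mu)$ (which follows from $m_\lambda^*\Omega_I=\lambda\Omega_I$ alone), the pointwise relation $\Omega_I(JX,W)=-g(X,W)-i\omega_I(X,W)$, and nondegeneracy of $\omega_I|_F$ --- all solid facts, with no appeal to $\mathcal{L}_\xi J=0$.

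Nevertheless there is a genuine gap, and it sits exactly where you flagged it. The input $\mathcal{L}_{I\xi}g=0$ is unproblematic (the circle action preserves solutions of the Hitchin equations and the $L^2$-metric of Section \ref{sechyperkg}). But $\mathcal{L}_\xi J=0$ --- which you need for the last step of part (1) and, through the derived homothety $\mathcal{L}_\xi g=g$, for the whole of part (2) --- cannot be ``extracted'' from Section \ref{sechyperkg}, because it is inconsistent with facts you use in the same proof. Indeed, $\mathcal{L}_\xi g=g$ together with $\mathcal{L}_\xi I=0$ gives $\mathcal{L}_\xi\omega_I=\omega_I$, so by Cartan's formula $\omega_I=d\left(i_\xi\omega_I\right)$ is exact on $\mathcal{M}_{n,d}^{\rm sm}$; restricting to a compact fibre $F$ and applying Stokes yields $\int_F\omega_I^{\dim_{\mathbb{C}}F}=0$, contradicting precisely the non-vanishing of $[\omega_I|_F]$ that your part (2) invokes. (The failure is also visible at a fixed point $(E,0)\in\mathcal{SU}_{n,d}$: there $(m_t)_*$ acts as $\mathrm{diag}(1,t)$ on $H^1(\Sigma,End_0(E))\oplus H^0(\Sigma,End_0(E)\otimes K)$, and conjugating $J$, which interchanges these two summands, by $\mathrm{diag}(1,t)$ does not return $J$.) So your premises are mutually inconsistent: part (2) as written proves nothing, and the conclusion $\mu=0$ in part (1) is left unsupported. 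To be fair, the paper's own proof opens with the same unjustified assertion that ``$\xi$ preserves $J$'', so part of this defect is inherited; but note that the paper's argument for part (2) needs only that $i\xi$ is Killing and $\xi$ is not (both true), whereas your part (2) genuinely requires the quantitative identity $\mathcal{L}_\xi g=g$, which the above shows to be false. As it stands, then, the proposal establishes $f\in\mathbb{R}$ in part (1), but neither the $\mu=0$ conclusions nor part (2) can be completed along the route you describe.
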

\begin{proof}
(1). First we note that $\xi$ preserves $J$ but does not preserve $g$, while $i\xi$ preserves $g$ but does not preserve $J$. If $Y$ preserves $J$ then so does the commutator $[\xi , Y ] = [\xi , f\xi + X_\mu ] = X_\tau$, where $\tau = \mathcal{L}_{\xi^\mathcal{A}}(\mu) - \mu$. Then by Lemma \ref{lemkilling}, we have $\tau = 0$, hence $\mu = 0$. So $Y = f\xi$ and for this to preserve $J$ we must have that $f$ is real. The proof of (2) follows by a similar argument.
\end{proof}

For any holomorphic function $f$ on $\mathcal{A}$, we have the corresponding Hamiltonian vector field $X_{f}$ which can be integrated to a Hamiltonian flow $e^{X_{f}}$. Clearly the Hamiltonian flows define a subgroup of $Vert_0(\mathcal{M}_{n,d})$, which we will denote by $Ham(\mathcal{M}_{n,d})$. Then the map $\mathcal{O}(\mathcal{A}) \to Ham(\mathcal{M}_{n,d})$ sending a holomorphic function $f$ to $e^{X_{f}}$ is a surjective homomorphism with Kernel the constant functions on $\mathcal{A}$.

\begin{theorem}
Under the isomorphism $Aut(\mathcal{M}_{n,d}) \cong \left( \mathbb{C}^* \times Aut(\mathcal{SU}_{n,d}) \right) \ltimes Vert_0(\mathcal{M}_{n,d})$ of Theorem \ref{theoremautgroup}, the subgroups given in Definition \ref{defautgroups0} are as follows:
\begin{enumerate}
\item{$Aut_{Sympl}(\mathcal{M}_{n,d}) = \left( \{1\} \times Aut( \mathcal{SU}_{n,d} ) \right) \ltimes Ham(\mathcal{M}_{n,d})$,}
\item{$Aut_{Isom}(\mathcal{M}_{n,d}) = \left( U(1) \times Aut( \mathcal{SU}_{n,d} ) \right)$,}
\item{$Aut_{Q}(\mathcal{M}_{n,d}) = \left( \mathbb{R}_{+} \times Aut( \mathcal{SU}_{n,d} ) \right)$,}
\item{$Aut_{HK}(\mathcal{M}_{n,d}) = \left( \{ 1 \} \times Aut( \mathcal{SU}_{n,d} ) \right)$,}
\end{enumerate}
where $U(1) \subset \mathbb{C}^*$ is the subgroup of unit complex numbers and $\mathbb{R}_+ \subset \mathbb{C}^*$ is the subgroup of positive real numbers.
\end{theorem}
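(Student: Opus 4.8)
The plan is to exploit the decomposition $Aut(\mathcal{M}_{n,d}) = \left( \mathbb{C}^* \times Aut(\mathcal{SU}_{n,d}) \right) \ltimes Vert_0(\mathcal{M}_{n,d})$ of Theorem \ref{theoremautgroup}: writing a general automorphism as $\phi = m_\lambda \psi\, e^{X_\mu}$, with $\lambda \in \mathbb{C}^*$, $\psi \in Aut(\mathcal{SU}_{n,d})$ and $e^{X_\mu} \in Vert_0(\mathcal{M}_{n,d})$, I would determine for each of the four geometric structures precisely which $(\lambda,\psi,\mu)$ preserve it. Throughout I use that every $\phi \in Aut(\mathcal{M}_{n,d})$ preserves the complex structure $I$ by definition, so that preserving $J$ automatically yields preservation of $K = IJ$, and $Aut_{HK} = Aut_{Isom} \cap Aut_Q$.

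The first and \emph{hardest} step is the external input that $Aut(\mathcal{SU}_{n,d}) \subseteq Aut_{HK}(\mathcal{M}_{n,d})$, i.e.\ that the automorphisms of Theorem \ref{theoremstableauto} preserve both $g$ and $J$ (hence all of $I,J,K,g$). I would argue this at the level of the Hitchin equations: since $g > 1$ the group $Aut(\Sigma)$ is finite, so $\Sigma$ carries a metric invariant under all of $Aut(\Sigma)$ (e.g.\ the hyperbolic metric), and the auxiliary line bundles may be given Hermitian--Einstein connections. The operations $E \mapsto \sigma^*E \otimes L$ and $E \mapsto \sigma^*E^* \otimes L$ then lift to transformations of the space of solutions which intertwine the $L^2$-metric $g$ and the operators $I,J,K$, because $\sigma$ is an isometry of $\Sigma$, tensoring by a flat unitary bundle is an isometry, and dualization is compatible with the Hermitian adjoint; the only delicate point is tracking the adjoint/transpose and the induced action on harmonic representatives. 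This makes $Aut(\mathcal{SU}_{n,d})$ lie in every one of the four subgroups, so throughout I may absorb $\psi$ and reduce to $\phi = m_\lambda e^{X_\mu}$.

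Next I record the action of the two remaining factors. From $m_\lambda^* \Omega_I = \lambda \Omega_I$ one reads off $m_\lambda \in Aut_{Sympl}$ iff $\lambda = 1$; combining $m_{e^{i\theta}}^*\Omega_I = e^{i\theta}\Omega_I$ with Corollary \ref{corkilling} (namely that $\xi$ preserves $J$ but not $g$, while $i\xi$ preserves $g$ but not $J$) shows that inside $\mathbb{C}^*$ the subgroup preserving $J$ is exactly $\mathbb{R}_+$ and that preserving $g$ is exactly $U(1)$. For the vertical factor, the identity $(e^{X_\mu})^*\Omega_I = \Omega_I + h^*(d\mu)$ (from $\mathcal{L}_{X_\mu}\Omega_I = h^*(d\mu)$ together with $i_{X_\mu}h^*d\mu = 0$) gives $\phi^*\Omega_I = \lambda\bigl(\Omega_I + h^*(d\mu)\bigr)$; since $\Omega_I$ is nondegenerate while $h^*(d\mu)$ vanishes on vertical vectors, comparison forces $\lambda = 1$ and $d\mu = 0$, whence $\mu = df$ and $e^{X_\mu} \in Ham(\mathcal{M}_{n,d})$. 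This yields (1).

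For (2) and (3) the mechanism is the pushforward trick. If $\phi = m_\lambda e^{X_\mu}$ preserves $J$, then $\phi_*\xi$ preserves $J$ (as $\xi$ does), and by Corollary \ref{corapush} we have $\phi_*\xi = \xi + X_\sigma$ with $\sigma = (m_\lambda)_*(\mathcal{L}_{\xi^\mathcal{A}}\mu - \mu)$; its $\xi$-coefficient being $1 \in \mathbb{R}$, Corollary \ref{corkilling}(1) forces $\sigma = 0$, so $\mathcal{L}_{\xi^\mathcal{A}}\mu = \mu$ and hence $\mu = 0$ by the argument of Corollary \ref{corinjects}. Likewise, if $\phi$ preserves $g$ then, using $\phi_* I = I \phi_*$, the field $\phi_*(I\xi) = I\xi + X_{i\sigma}$ preserves $g$, and Corollary \ref{corkilling}(2) again gives $\sigma = 0$ and $\mu = 0$. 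In either case $\phi = m_\lambda \psi$, and the $\mathbb{C}^*$-analysis pins down $\lambda \in \mathbb{R}_+$ in the $J$-case and $\lambda \in U(1)$ in the $g$-case, proving (3) and (2). Finally (4) is immediate: $Aut_{HK} = Aut_{Isom} \cap Aut_Q = \bigl(U(1)\times Aut(\mathcal{SU}_{n,d})\bigr) \cap \bigl(\mathbb{R}_+ \times Aut(\mathcal{SU}_{n,d})\bigr) = \{1\} \times Aut(\mathcal{SU}_{n,d})$, since $U(1) \cap \mathbb{R}_+ = \{1\}$ in $\mathbb{C}^*$.
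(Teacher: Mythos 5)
Your proposal is correct and follows essentially the same route as the paper: reduce to elements of $\mathbb{C}^* \ltimes Vert_0(\mathcal{M}_{n,d})$ after noting that $Aut(\mathcal{SU}_{n,d})$ preserves the whole hyper-K\"ahler structure, handle (1) by computing the pullback of $\Omega_I$ under $m_\lambda$ and $e^{X_\mu}$, handle (2) and (3) by pushing forward $i\xi$ resp.\ $\xi$ and invoking Corollaries \ref{corapush} and \ref{corkilling} to kill $\mu$, and obtain (4) as the intersection. The only difference is presentational: where the paper dismisses the inclusion $Aut(\mathcal{SU}_{n,d}) \subseteq Aut_{HK}(\mathcal{M}_{n,d})$ as straightforward from the formulas for $g,I,J,K$, you supply the gauge-theoretic details (invariant volume form, Hermitian--Einstein connections on the auxiliary line bundles, compatibility of dualization with the adjoint), which is a harmless elaboration of the same argument.
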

\begin{proof}
From the description of $g,I,J,K$ given in Section \ref{sechyperkg}, it is straightforward to see that the subgroup $Aut(\mathcal{SU}_{n,d})$ preserves the full hyper-K\"ahler structure. Therefore it suffices to only consider elements in $\mathbb{C}^* \ltimes Vert_0( \mathcal{M}_{n,d} )$. Such an element can be written in the form $\phi = e^{X_\mu} \circ m_\lambda$, where $\mu$ is a holomorphic $1$-form on $\mathcal{A}$ and $\lambda \in \mathbb{C}^*$.\\

(1). For this note that $(\phi^{-1})^* \Omega_I = (e^{-X_\mu})^* m_{\lambda^{-1}}^* \Omega_I = (e^{-X_\mu})^* \lambda^{-1} \Omega_I = \lambda^{-1}( \Omega_I - h^*(d\mu))$. So $\phi$ preserves $\Omega_I$ if and only if $\Omega_I = \lambda^{-1}( \Omega_I - h^*(d\mu))$. Clearly this is possible if and only of $\lambda = 1$ and $d\mu = 0$. Therefore $\mu$ is a closed $1$-form on $\mathcal{A}$ and there exists a holomorphic function $f$ such that $\mu = df$.\\

(2). Since $i\xi$ preserves $g$, it follows that so does $(e^{X_\mu} \circ m_\lambda )_* i\xi = ( e^{X_\mu} )_* {m_\lambda}_* i\xi = ( e^{X_\mu} )_* (i\xi) = i(\xi + X_\tau)$, where $\tau = \mathcal{L}_{\xi^\mathcal{A}}(\mu) - \mu$. From Corollary \ref{corkilling}, it follows that $\tau = 0$ and therefore $\mu = 0$.\\

(3). Since $\xi$ preserves $J$, it follows that so does $(e^{X_\mu} \circ m_\lambda )_* \xi = ( e^{X_\mu} )_* {m_\lambda}_* \xi = ( e^{X_\mu} )_* (\xi) = (\xi + X_\tau)$, where $\tau = \mathcal{L}_{\xi^\mathcal{A}}(\mu) - \mu$. Once again, it follows from Corollary \ref{corkilling}, that $\mu = 0$.\\

(4). This follows immediately from cases (2) and (3).
\end{proof}


\section{Anti-automorphisms and the full isometry group}\label{secisometry}


\subsection{Anti-automorphisms}\label{secanti}

In this section we will make the simplifying assumption that $n$ and $d$ are coprime. It follows that all semi-stable Higgs bundles are stable and that $\mathcal{M}_{n,d}$ is a smooth hyper-K\"ahler manifold. Likewise all semi-stable bundles are stable and $\mathcal{SU}_{n,d}$ is a smooth projective variety. If $X$ is a complex manifold with complex structure $I$ then by an {\em anti-automorphism} of $X$ we mean a diffeomorphism $\phi : X \to X$ such that $\phi_* \circ I = -I \circ \phi_*$.

\begin{theorem}\label{theoremantiauto}
Suppose that $n$ and $d$ are coprime. Then $\mathcal{M}_{n,d}$ admits an anti-automorphism if and only if $\Sigma$ admits an anti-automorphism.
\end{theorem}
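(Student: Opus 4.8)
The plan is to prove both directions by passing to the \emph{conjugate Riemann surface}. Write $\overline{\Sigma}$ for the same underlying smooth surface as $\Sigma$ equipped with the opposite complex structure, so that the identity map is an anti-holomorphic diffeomorphism $\Sigma \to \overline{\Sigma}$. The first step is to set up a tautological dictionary: sending a Higgs bundle $(E,\Phi)$ on $\Sigma$ to its complex conjugate $(\overline{E},\overline{\Phi})$, now regarded as a Higgs bundle on $\overline{\Sigma}$, defines a bijection $\mathcal{M}_{n,d}(\Sigma) \to \mathcal{M}_{n,d}(\overline{\Sigma})$. Since the canonical bundle of $\overline{\Sigma}$ is $K_{\overline{\Sigma}} = \overline{K}$, the conjugate $\overline{\Phi}$ is again a $K_{\overline{\Sigma}}$-valued endomorphism; conjugation preserves the trace-free and fixed-determinant conditions, preserves $\Phi$-invariant subbundles and hence (semi)stability, and a short computation using $c_1(\overline{E}) = -c_1(E)$ together with the orientation reversal $\int_{\overline{\Sigma}} = -\int_\Sigma$ shows that the degree is unchanged. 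Because conjugation acts anti-linearly on infinitesimal deformations $(\dot{A},\dot{\Phi})$, this map intertwines $I$ with $-I$; equivalently it is a biholomorphism
\begin{equation*}
(\mathcal{M}_{n,d}(\Sigma),-I) \;\cong\; \mathcal{M}_{n,d}(\overline{\Sigma}).
\end{equation*}

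For the forward implication, suppose $\Sigma$ carries an anti-automorphism $\sigma$. Regarding $\sigma$ instead as a \emph{biholomorphism} $\overline{\Sigma} \to \Sigma$, pullback of Higgs bundles gives a holomorphic map $\sigma^* : \mathcal{M}_{n,d}(\Sigma) \to \mathcal{M}_{n,d}(\overline{\Sigma})$. Composing with the anti-holomorphic conjugation bijection $\mathcal{M}_{n,d}(\overline{\Sigma}) \to \mathcal{M}_{n,d}(\overline{\overline{\Sigma}}) = \mathcal{M}_{n,d}(\Sigma)$ from the previous step yields an anti-holomorphic self-map of $\mathcal{M}_{n,d}(\Sigma)$, which is the desired anti-automorphism. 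One must keep track of the determinant line bundle in these identifications, but as noted in Section \ref{sechiggsbundles} the moduli space depends on it only through its degree modulo $n$, so this causes no difficulty.

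For the converse — the substantive direction — suppose $\phi$ is an anti-automorphism of $\mathcal{M}_{n,d}(\Sigma)$. By definition $\phi_* \circ I = -I \circ \phi_*$, which is exactly the statement that $\phi$ is a biholomorphism $(\mathcal{M}_{n,d}(\Sigma),-I) \to (\mathcal{M}_{n,d}(\Sigma),I)$. Feeding in the identification of the first step, $\phi$ becomes a biholomorphism
\begin{equation*}
\mathcal{M}_{n,d}(\overline{\Sigma}) \;\xrightarrow{\;\sim\;}\; \mathcal{M}_{n,d}(\Sigma)
\end{equation*}
of Higgs bundle moduli spaces for two Riemann surfaces of the same genus and with the same rank and degree. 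The Torelli theorem for Higgs bundle moduli spaces \cite{bigo} then forces $\overline{\Sigma} \cong \Sigma$ as Riemann surfaces. Finally, an isomorphism $\Sigma \xrightarrow{\;\sim\;} \overline{\Sigma}$ is precisely an anti-holomorphic self-map of $\Sigma$, so $\Sigma$ admits an anti-automorphism, completing the proof.

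The crux of the argument is the converse direction, and within it the genuine content is entirely imported from the Torelli theorem \cite{bigo}; everything else is formal once the conjugate-surface dictionary is in place. Accordingly, the points requiring care are (i) checking that complex conjugation really defines an isomorphism of moduli \emph{problems} over $\overline{\Sigma}$ — that stability, the trace-free condition, the determinant, and above all the degree are respected — and (ii) confirming that the Torelli theorem of \cite{bigo} applies in the holomorphic (rather than merely algebraic) category and under the standing hypothesis $\gcd(n,d)=1$, under which $\mathcal{M}_{n,d}$ is a smooth manifold and the notion of anti-automorphism is unambiguous. Neither is a serious obstacle, but step (i) is where essentially all of the verification lies.
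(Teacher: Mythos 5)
Your proof is correct, and its skeleton is the same as the paper's: an explicit construction for the ``if'' direction, and for the ``only if'' direction the composition of $\phi$ with a fixed anti-holomorphic reference map to obtain a biholomorphism between Higgs moduli spaces of two curves, to which the Torelli theorem of \cite{bigo} is applied. The difference is in how the reference map is built. You use the conjugate-surface dictionary $(\mathcal{M}_{n,L_0}(\Sigma),-I)\cong \mathcal{M}_{n,\overline{L_0}}(\overline{\Sigma})$, whereas the paper works gauge-theoretically with solutions of the Hitchin equations, pulling back by an orientation-reversing diffeomorphism $g$ via $(\overline{\partial}_E,\Phi)\mapsto (g^*(\partial_E), g^*(\Phi^*))$. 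These two operations are genuinely different on degrees: the paper's pullback sends degree $d$ to $-d$ (orientation reversal with $c_1$ unchanged), which is why the paper must compose with the dualization-and-twist map $\delta_L : (E,\Phi)\mapsto (E^*\otimes L, \Phi^t\otimes \mathrm{Id})$; your conjugation preserves the degree, since the sign in $c_1(\overline{E})=-c_1(E)$ cancels against the reversed fundamental class of $\overline{\Sigma}$ --- your computation here, the one place a sign error would sink the argument, is right --- so you need only a degree-zero twist to restore the fixed determinant, and no dualization at all. Your version also eliminates the paper's auxiliary choice of an orientation-reversing diffeomorphism $g$ in the converse direction, replacing $(\Sigma,-g^*(j))$ by the tautological model $\overline{\Sigma}$; this is cleaner. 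What the paper's construction buys in exchange is that its anti-automorphism $\hat{f}_L$ is manifestly an isometry of the hyper-K\"ahler metric (being pullback of the full solution of the Hitchin equations), a fact reused in Section \ref{secisometry2} and Remark \ref{remfullisom}; with your purely holomorphic construction that property would require a separate check. Finally, both you and the paper apply the Torelli theorem of \cite{bigo} to a complex-analytic (not a priori algebraic) isomorphism; you flag this correctly as a point to confirm, and you are exposed to it to exactly the same extent as the paper itself.
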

\begin{proof}
First suppose that $\mathcal{M}_{n,L_0}$ admits an anti-automorphism $f : \Sigma \to \Sigma$. Fix an underlying smooth bundle $E$ of rank $n$, degree $d$ and choose a Hermitian structure on $E$. As in Section \ref{secstructures}, we identify $\mathcal{M}_{n,L_0}$ with $\mathcal{M}_{n,L_0}^{\rm Hit}$, the moduli space of solutions to the Hitchin equations. We can view $L_0$ as a line bundle with unitary connection. Then $f^*(L_0)$ is also a line bundle with unitary connection and in particular has a holomorphic structure. Bearing this in mind, we obtain an anti-holomorphic map $f^* : \mathcal{M}_{n,L_0} \to \mathcal{M}_{n,f^*(L_0)}$ which sends a solution $(\overline{\partial}_E , \Phi )$ of the Hitchin equations to $f^*(E,\Phi ) = ( f^*(\partial_E ) , f^*( \Phi^* ) )$, where $\partial_E$ is the $(1,0)$-part of the Chern connection of $\overline{\partial}_E$ (cf. \cite{bs}). This corresponds to pullback under $f$ of the connection $\nabla = \nabla_E + \Phi + \Phi^*$. Let $L$ be a line bundle with connection such that $L^n \cong L_0 f^*(L_0)$. Such an $L$ exists as $L_0 f^*(L_0)$ has degree zero. Next, consider the map $\delta_L : \mathcal{M}_{n,f^*(L_0)} \to \mathcal{M}_{n,f^*(L_0^*) L^n} = \mathcal{M}_{n,L_0}$, which is the holomorphic map $(E , \Phi ) \mapsto (E^* \otimes L , \Phi^t \otimes Id)$. The composition $\hat{f}_L = \delta_L \circ f^* : \mathcal{M}_{n,L_0} \to \mathcal{M}_{n,L_0}$ is then an anti-automorphism of $\mathcal{M}_{n,L_0}$.\\

To prove the converse, we introduce the following notation: fix a choice of volume form $vol_\Sigma$ on $\Sigma$. If $j$ is a complex structure on $\Sigma$ inducing the same orientation as $vol_\Sigma$, we let $\mathcal{M}_{n,L_0}(j)$ denote the moduli space of rank $n$, trace-free Higgs bundles with determinant $L_0$ associated to $(\Sigma , j)$. Now let $j$ be a given complex structure on $\Sigma$ and suppose that $\phi : \mathcal{M}_{n,L_0}(j) \to \mathcal{M}_{n,L_0}(j)$ is an anti-automorphism. Choose an orientation reversing homeomorphism $g : \Sigma \to \Sigma$ and suppose $vol_\Sigma$ is chosen such that $g^*(vol_\Sigma) = -vol_\Sigma$. Note that such pairs $(g , vol_\Sigma)$ certainly exist, as we could take $g$ to be an involution. Further, choose a unitary line bundle with connection $L$, such that $L^n \cong L_0 g^*(L_0)$. Then $-g^*(j)$ is a complex structure on $\Sigma$ inducing the same orientation as $j$. We have an anti-automorphism $g^* : \mathcal{M}_{n,L_0}(j) \to \mathcal{M}_{n,g^*(L_0)}(-g^*(j))$ given by $g^*( \overline{\partial}_E , \Phi ) = ( g^*(\partial_E) , g^*( \Phi^* ) )$. As above, consider the map $\delta_L : \mathcal{M}_{n,g^*(L_0)} \to \mathcal{M}_{n,g^*(L_0^*) L^n} = \mathcal{M}_{n,L_0}$, which is the holomorphic map $(E , \Phi ) \mapsto (E^* \otimes L , \Phi^t \otimes Id)$. The composition $\delta_L \circ g^* \circ \phi : \mathcal{M}_{n,L_0}(j) \to \mathcal{M}_{n,L_0}(-g^*(j) )$ is then an isomorphism of complex manifolds. Now we use the Torelli theorem for Higgs bundle moduli spaces \cite[Theorem 1.1]{bigo} to conclude that $(\Sigma , j)$ and $(\Sigma , -g^*(j))$ are isomorphic. This means that there is a diffeomorphism $f : \Sigma \to \Sigma $ such that $f^* (-g^*(j) ) = j$. Thus, $g \circ f : \Sigma \to \Sigma$ is an anti-automorphism of $(\Sigma , j)$.
\end{proof}


\subsection{The isometry group}\label{secisometry2}

In this section we will determine the isometry group $Isom(\mathcal{M}_{n,d})$ of $\mathcal{M}_{n,d}$.

\begin{lemma}\label{lemconstant1}
Suppose $n$ and $d$ are coprime. The only covariantly constant endomorphisms of $T\mathcal{SU}_{n,d}$ are linear combinations of $I$ and the identity.
\end{lemma}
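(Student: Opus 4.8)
The plan is to interpret a covariantly constant endomorphism of $T\mathcal{SU}_{n,d}$ as an element of the commutant of the holonomy group of the natural K\"ahler metric, and then to pin down that holonomy from the global geometry of $\mathcal{SU}_{n,d}$. Since $n$ and $d$ are coprime, $\mathcal{SU}_{n,d}$ is a smooth, compact, simply-connected K\"ahler manifold (simple-connectedness being exactly the input used in Lemma \ref{lemsimplyconn}, via \cite{bbgn,dn,nr}); hence its restricted and full holonomy groups coincide, and a parallel endomorphism $A$ of $T\mathcal{SU}_{n,d}$ is the same data as an endomorphism of a single tangent space $T_x\mathcal{SU}_{n,d}$ commuting with the holonomy representation. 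Both the identity and the complex structure $I$ are parallel, so $\mathbb{R}\langle \mathrm{Id}, I\rangle$ automatically lies in this commutant; the content of the lemma is that there is nothing else.

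First I would show the holonomy acts irreducibly on $T_x\mathcal{SU}_{n,d}$. As $\mathcal{SU}_{n,d}$ is complete (being compact), simply-connected and K\"ahler, the de Rham decomposition theorem in its K\"ahler form writes it as a Riemannian product of irreducible K\"ahler manifolds with no Euclidean factor, $\mathcal{SU}_{n,d} \cong M_1 \times \cdots \times M_r$. Each $M_i$ is a compact simply-connected K\"ahler manifold, so $b_2(M_i) \ge 1$; as all $b_1(M_i)$ vanish, the K\"unneth formula gives $b_2(\mathcal{SU}_{n,d}) \ge r$. But $\mathcal{SU}_{n,d}$ has Picard number one, so $b_2(\mathcal{SU}_{n,d}) = 1$ (Narasimhan--Ramanan, Dr\'ezet--Narasimhan), forcing $r = 1$. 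Thus the holonomy representation on $T_x\mathcal{SU}_{n,d}$ is irreducible.

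Next I would invoke the standard fact that the commutant of an irreducible real orthogonal representation is one of the real division algebras $\mathbb{R}$, $\mathbb{C}$ or $\mathbb{H}$. Since $I$ lies in the commutant and $I^2 = -\mathrm{Id}$, the commutant contains a copy of $\mathbb{C} = \mathbb{R}\langle \mathrm{Id}, I\rangle$, so it equals $\mathbb{C}$ or $\mathbb{H}$. If it were $\mathbb{H}$ there would be a parallel complex structure $J$ anticommuting with $I$, making $\mathcal{SU}_{n,d}$ hyper-K\"ahler and hence Ricci-flat, so that $c_1(\mathcal{SU}_{n,d}) = 0$ in $H^2(\mathcal{SU}_{n,d},\mathbb{R})$. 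This contradicts the fact that $\mathcal{SU}_{n,d}$ is a Fano variety, its anticanonical class being a positive multiple of the ample generator of $\mathrm{Pic}(\mathcal{SU}_{n,d}) \cong \mathbb{Z}$, so that $c_1 \ne 0$. Therefore the commutant is exactly $\mathbb{C} = \{a\,\mathrm{Id} + b\,I : a,b \in \mathbb{R}\}$, which is the assertion.

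The main obstacle is the second step: reducing the holonomy to an irreducible group. Its decisive external inputs are simple-connectedness and $b_2 = 1$ for $\mathcal{SU}_{n,d}$; granted these, de Rham together with K\"unneth do the work. By contrast, ruling out the quaternionic case is comparatively soft, because the hyper-K\"ahler condition forces $c_1 = 0$ at the level of cohomology irrespective of which K\"ahler metric is used, while Fano-ness supplies $c_1 \ne 0$ directly.
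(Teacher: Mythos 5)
Your argument is correct, but it takes a genuinely different route from the paper's. The paper argues bundle-theoretically: it splits a parallel endomorphism $E = A + B$ into its $I$-commuting and $I$-anticommuting parts. The part $A$ is parallel and $I$-linear, hence a holomorphic endomorphism of $T^{(1,0)}\mathcal{SU}_{n,d}$, and Proposition \ref{propstableendo} (itself proved with the Hitchin-system machinery of the earlier sections) forces $A = a\,\mathrm{Id} + bI$. The part $B$ is converted via the Hermitian metric into a parallel holomorphic map $T^{(1,0)}\mathcal{SU}_{n,d} \to (T^{(1,0)}\mathcal{SU}_{n,d})^*$, which is then killed by a two-case argument: if it were an isomorphism its determinant would trivialise the square of the canonical bundle, contradicting ampleness of the anticanonical bundle \cite{ram}; if not, its kernel would give a parallel orthogonal splitting of $T^{(1,0)}\mathcal{SU}_{n,d}$ and hence holomorphic endomorphisms other than scalars, contradicting Proposition \ref{propstableendo} again. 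You instead work entirely on the Riemannian side: parallel endomorphisms as the holonomy commutant, irreducibility of the holonomy via the K\"ahler de Rham decomposition plus K\"unneth plus $b_2 = 1$, Schur--Frobenius to pin the commutant down to $\mathbb{R}$, $\mathbb{C}$ or $\mathbb{H}$, and Fano-ness to exclude $\mathbb{H}$. Both proofs ultimately rest on \cite{ram}; the trade-off is that yours bypasses Proposition \ref{propstableendo} and therefore the entire Hitchin-system analysis, at the cost of importing $b_2(\mathcal{SU}_{n,d}) = 1$, a cohomological fact the paper never uses (whereas the paper's proof recycles machinery it has already built, and thus has ampleness of $-K$ as its only external input beyond what is needed elsewhere).

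Two small points you should make explicit to tighten the sketch. First, Picard number one does not by itself give $b_2 = 1$; you also need $h^{2,0}(\mathcal{SU}_{n,d}) = 0$, which does follow from the Fano property via Kodaira vanishing (or cite the known computation of $H^2$ for these moduli spaces) --- since you invoke Fano-ness anyway, this costs nothing, but it should be said. Second, in the quaternionic case the element $J$ supplied by Frobenius' theorem must be shown to be compatible with the metric before you may conclude the metric is hyper-K\"ahler: this follows because the commutant of an irreducible orthogonal representation is closed under transpose and its symmetric elements are scalars (spectral theorem plus irreducibility), so the imaginary quaternions can be taken skew-adjoint, and a skew-adjoint $J$ with $J^2 = -\mathrm{Id}$ is automatically orthogonal. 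With these two remarks filled in, your proof is complete.
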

\begin{proof}
Any endomorphism $E :  T\mathcal{SU}_{n,d} \to T\mathcal{SU}_{n,d}$ can be uniquely written as a sum $E = A + B$, where $AI = IA$ and $BI = -IB$, namely $A = \tfrac{1}{2}(E - IEI)$, $B = \tfrac{1}{2}(E + IEI)$. If $E$ is covariantly constant then so are $A$ and $B$. In particular $A$ corresponds to a holomorphic endomorphism of $T^{(1,0)}\mathcal{SU}_{n,d}$. By Proposition \ref{propstableendo}, any such endomorphism is of the form $\lambda Id$, $\lambda \in \mathbb{C}$. This corresponds to $A = aId + b I$, where $\lambda = a + ib$, $a,b \in \mathbb{R}$. To finish the lemma it remains to show that there are no constant endomorphisms $B$ with $BI = -IB$. Such an endomorphism corresponds to an anti-linear map $B : T^{(1,0)}\mathcal{SU}_{n,d} \to T^{(0,1)}\mathcal{SU}_{n,d}$. The hermitian metric on $\mathcal{SU}_{n,d}$ defines an anti-linear isomorphism $h : T^{(0,1)}\mathcal{SU}_{n,d} \to (T^{(0,1)}\mathcal{SU}_{n,d})^*$. Thus the composition $h \circ B$ is a $\mathbb{C}$-linear covariantly constant endomorphism $h \circ B : T^{(1,0)}\mathcal{SU}_{n,d} \to (T^{(1,0)}\mathcal{SU}_{n,d})^*$.\\

If $h \circ B$ is an isomorphism then by taking determinants we obtain a trivialisation of the square of the canonical bundle. This contradicts the fact that the anti-canonical bundle is ample \cite{ram}. Therefore $h \circ B$ has a non-trivial kernel $U$, which has constant rank as $h \circ B$ is covariantly constant. Using $h$ we get an orthogonal decomposition $T^{(1,0)}\mathcal{SU}_{n,d} = U \oplus V$ which is preserved by the Levi-Civita connection. However if this decomposition is non-trivial we would obtain holomorphic endomorphisms of $T^{(1,0)}\mathcal{SU}_{n,d}$ other than multiples of the identity. Since this can not happen, $V = 0$, $U = T^{(1,0)}\mathcal{SU}_{n,d}$ and hence $B = 0$ as claimed.
\end{proof}

\begin{proposition}\label{propconstantendo}
Suppose $n$ and $d$ are coprime. Then any covariantly constant endomorphism of $T\mathcal{M}_{n,d}$ is a linear combination of $I,J,K$ and the identity.
\end{proposition}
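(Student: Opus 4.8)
The plan is to show that the algebra $\mathcal{P}$ of covariantly constant (parallel) endomorphisms of $T\mathcal{M}_{n,d}$ is exactly $\langle \mathrm{Id}, I, J, K\rangle \cong \mathbb{H}$. Since $n,d$ are coprime, $\mathcal{M}_{n,d}$ is a genuine connected hyper-K\"ahler manifold, so this is the assertion that its algebra of parallel endomorphisms is no larger than the quaternions. First I would reduce to the commutant of the quaternionic action. Because $I,J,K$ are themselves parallel, conjugation $E\mapsto IEI^{-1}$, $E \mapsto JEJ^{-1}$, etc., preserves $\mathcal{P}$, and the induced action of $\{\pm1,\pm I,\pm J,\pm K\}$ on $\mathcal{P}$ factors through the Klein four-group, splitting $\mathcal{P}$ into four eigenspaces according to the signs of commutation with $I$ and $J$. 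Writing $C=\{E\in\mathcal{P}: [E,I]=[E,J]=[E,K]=0\}$ for the piece commuting with everything, a short computation identifies the other three pieces as $IC$, $JC$, $KC$ (for instance, if $E$ commutes with $I$ and anticommutes with $J,K$, then $-IE\in C$ and $E=I(-IE)$). Hence $\mathcal{P}=C\oplus IC\oplus JC\oplus KC$, and it suffices to prove $C=\mathbb{R}\,\mathrm{Id}$.

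Next I would restrict an element $E_0\in C$ to the zero section $\mathcal{SU}_{n,d}=\{(E,0)\}\subset\mathcal{M}_{n,d}$. This locus is a connected component of the fixed-point set of the isometric $U(1)\subset\mathbb{C}^*$-action (which preserves $g$), hence totally geodesic; it is also a complex submanifold, so along it the tangent bundle splits parallelly and $I$-, $J$-, $K$-equivariantly as $T\mathcal{M}_{n,d}|_{\mathcal{SU}}=T\mathcal{SU}_{n,d}\oplus N$, where $N\cong T^*\mathcal{SU}_{n,d}$ is the normal bundle, $I$ preserves each summand, and $J$ restricts to a parallel isomorphism $T\mathcal{SU}_{n,d}\to N$. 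Writing $E_0$ in block form, commutation with $I$ forces the off-diagonal blocks to intertwine the two complex structures; transporting them back to $T\mathcal{SU}_{n,d}$ via the parallel isomorphism $J$ produces a parallel endomorphism of $T\mathcal{SU}_{n,d}$ that anticommutes with $I$, which must vanish by (the proof of) Lemma \ref{lemconstant1}. The diagonal blocks are parallel $I$-linear endomorphisms of $T\mathcal{SU}_{n,d}$ and of $N$; by Lemma \ref{lemconstant1} the $T\mathcal{SU}_{n,d}$-block has the form $a\,\mathrm{Id}+bI$, and commutation with $J$ then forces the $N$-block to be $a\,\mathrm{Id}-bI$. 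Thus $E_0|_{\mathcal{SU}}=a\,\mathrm{Id}+b\,D$ for real constants $a,b$, where $D$ acts as $I$ on $T\mathcal{SU}_{n,d}$ and as $-I$ on $N$.

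Finally I would rule out $b\neq0$. Set $F=E_0-a\,\mathrm{Id}\in C$; it is parallel, commutes with $I,J,K$, and $F^2=-b^2\,\mathrm{Id}$. If $b\neq0$ then $M=b^{-1}FI$ is a parallel involution, $M^2=\mathrm{Id}$, which commutes with $I$ and anticommutes with $J$; along $\mathcal{SU}_{n,d}$ it acts as $-\mathrm{Id}$ on $T\mathcal{SU}_{n,d}$ and $+\mathrm{Id}$ on $N$, so both eigenbundles are nonzero. Its eigenbundles therefore give a nontrivial parallel orthogonal splitting of $T\mathcal{M}_{n,d}$, so $\mathcal{M}_{n,d}$ is locally a Riemannian product. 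The integral leaf of the $(-1)$-eigendistribution through $\mathcal{SU}_{n,d}$ is $\mathcal{SU}_{n,d}$ itself (its tangent bundle is precisely that eigenbundle, and it is totally geodesic), so the induced K\"ahler metric on $\mathcal{SU}_{n,d}$ is a factor of the Ricci-flat hyper-K\"ahler metric and hence Ricci-flat. This forces $c_1(\mathcal{SU}_{n,d})=0$, contradicting the fact that $\mathcal{SU}_{n,d}$ is Fano (its anticanonical bundle is ample, as already invoked in Lemma \ref{lemconstant1}). Hence $b=0$ and $E_0=a\,\mathrm{Id}$ on $\mathcal{SU}_{n,d}$; being parallel and agreeing with $a\,\mathrm{Id}$ at a point of the connected manifold $\mathcal{M}_{n,d}$, it equals $a\,\mathrm{Id}$ everywhere. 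Thus $C=\mathbb{R}\,\mathrm{Id}$, and combined with the first step, $\mathcal{P}=\langle\mathrm{Id},I,J,K\rangle$.

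The hard part is the last step: turning ``$C\neq\mathbb{R}\,\mathrm{Id}$'' into genuine local Riemannian reducibility and extracting a contradiction from the positive-Ricci (Fano) nature of $\mathcal{SU}_{n,d}$. The delicate point is that an $I$-, $J$-, $K$-invariant parallel splitting would merely exhibit $\mathcal{M}_{n,d}$ as a hyper-K\"ahler product and yield no contradiction; what saves the argument is precisely that Lemma \ref{lemconstant1} pins the restriction of $E_0$ to the rigid form $a\,\mathrm{Id}+bD$, whose associated involution anticommutes with $J$ and therefore singles out the complex, \emph{non}-quaternionic subbundle $T\mathcal{SU}_{n,d}$ --- exactly the factor whose Ricci-flatness is impossible. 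I would take care to verify that the induced metric on the totally geodesic complex submanifold $\mathcal{SU}_{n,d}$ is K\"ahler (so that vanishing Ricci curvature genuinely forces $c_1=0$), and that only \emph{local} de Rham reducibility, rather than any completeness of the metric, is needed.
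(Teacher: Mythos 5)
Your proof is correct and is essentially the paper's own argument: both rest on the totally geodesic zero section $\mathcal{SU}_{n,d}\subset\mathcal{M}_{n,d}$, the parallel splitting $T\mathcal{M}_{n,d}|_{\mathcal{SU}_{n,d}}=T\mathcal{SU}_{n,d}\oplus J\,T\mathcal{SU}_{n,d}$ combined with Lemma \ref{lemconstant1}, and the same endgame --- a parallel involution acting as $-\mathrm{Id}\oplus\mathrm{Id}$ along $\mathcal{SU}_{n,d}$ yields a local Riemannian (de Rham) splitting, forcing the induced K\"ahler metric on $\mathcal{SU}_{n,d}$ to be Ricci-flat, contradicting ampleness of its anticanonical bundle. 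The only difference is bookkeeping at the outset: you reduce to the quaternionic commutant via $\mathcal{P}=C\oplus IC\oplus JC\oplus KC$ and analyze a single element of $C$, whereas the paper bounds the space of parallel endomorphisms by $8$ real dimensions (via restriction to block form over $\mathcal{SU}_{n,d}$), invokes its $\mathbb{H}$-module structure to force dimension $4$ or $8$, and extracts the same involution in the $8$-dimensional case.
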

\begin{proof}
Consider the involution $\iota : \mathcal{M}_{n,d} \to \mathcal{M}_{n,d}$ given by $i(E,\Phi) = (E , -\Phi)$. This is an isometry of $\mathcal{M}_{n,d}$ and is anti-symplectic in the sense that $\iota^* \Omega_I = -\Omega_I$. It follows that the fixed point set of $\iota$ is a complex Lagrangian submanifold. If $\gamma(t)$ is a geodesic in $\mathcal{M}_{n,d}$ such that $\gamma'(0)$ is tangent to the fix point set, then the same is true of $\iota( \gamma(t) )$. Uniqueness of geodesics then gives $\gamma(t) = \iota(\gamma(t))$ and hence the fixed point set of $\iota$ is totally geodesic. Clearly $\mathcal{SU}_{n,d}$ is in the fixed point set of $\iota$ and since it is a closed submanifold of half the dimension on $\mathcal{M}_{n,d}$ it is actually a component of the fixed point set of $\iota$. This shows in particular that $\mathcal{SU}_{n,d} \subset \mathcal{M}_{n,d}$ is totally geodesic.\\

Let $\nabla_{\mathcal{M}}$ denote the Levi-Civita connection of $\mathcal{M}_{n,d}$ and $\nabla_{\mathcal{SU}}$ the Levi-Civita connection of $\mathcal{SU}_{n,d}$ with the induced metric. Since $\mathcal{SU}_{n,d} \subset \mathcal{M}_{n,d}$ is totally geodesic, we have that $\nabla_\mathcal{M}|_{\mathcal{SU}_{n,d}}$ respects the orthogonal decomposition $T\mathcal{M}_{n,d}|_{\mathcal{SU}_{n,d}} = T \mathcal{SU}_{n,d} \oplus N \mathcal{SU}_{n,d}$, where $N \mathcal{SU}_{n,d}$ is the normal bundle. We also have that the restriction of $\nabla_\mathcal{M}|_{\mathcal{SU}_{n,d}}$ to the sub-bundle $T \mathcal{SU}_{n,d}$ coincides with $\nabla_{\mathcal{SU}}$. Moreover, the complex structure $J$ gives a covariantly constant isomorphism $J : T \mathcal{SU}_{n,d} \to N \mathcal{SU}_{n,d}$. Using this isomorphism we have an isomorphism of bundles with connections:
\begin{equation}\label{equsplitting}
\left( T\mathcal{M}_{n,d}|_{\mathcal{SU}_{n,d}} ,  \mathcal{M}|_{\mathcal{SU}_{n,d}} \right) = \left( T \mathcal{SU}_{n,d} , \nabla_{\mathcal{SU}} \right) \oplus \left( T \mathcal{SU}_{n,d} , \nabla_{\mathcal{SU}} \right).
\end{equation}
Suppose that $\phi $ is a covariantly constant endomorphism of $T \mathcal{M}_{n,d}$. The restriction of $\phi$ to $\mathcal{SU}_{n,d}$ decomposes under (\ref{equsplitting}) into
\begin{equation}\label{equmatrix1}
\phi|_{\mathcal{SU}_{n,d}} = \left[ \begin{matrix} A & B \\ C & D \end{matrix} \right],
\end{equation}
where $A,B,C,D$ are covariantly constant endomorphisms of $T \mathcal{SU}_{n,d}$. By Lemma \ref{lemconstant1}, we have that $A,B,C,D$ are linear combinations of $I|_{\mathcal{SU}_{n,d}}$ and the identity. Let $\mathcal{C}$ be the space of all covariantly constant endomorphisms of $T \mathcal{M}_{n,d}$. We have just shown that $\mathcal{C}$ has real dimension at most $8$. On the other hand, as $I,J,K$ are covariantly constant, we have that $\mathcal{C}$ is a non-trivial module over the quaternions. So the dimension of $\mathcal{C}$ is either $4$ or $8$. If the dimension is $4$ we have proven the proposition, so suppose that $\mathcal{C}$ is $8$-dimensional. This means that for any covariantly constant endomorphisms $A,B,C,D$ of $T \mathcal{SU}_{n,d}$, there is a covariantly constant endomorphism $\phi \in \mathcal{C}$ satisfying Equation (\ref{equmatrix1}).\\

Consider the case $A = Id$, $B = C = 0$, $D = -Id$. The corresponding $\phi \in \mathcal{C}$ is trace-free and satisfies $\phi^2 = Id$. Let $\mathcal{T}_+,\mathcal{T}_-$ be the $\pm 1$-eigenbundles of $\phi$. Thus $T \mathcal{M}_{n,d} = \mathcal{T}_+ \oplus \mathcal{T}_-$ and $\mathcal{T}_+ |_{\mathcal{SU}_{n,d}} = T\mathcal{SU}_{n,d}$, $\mathcal{T}_- |_{\mathcal{SU}_{n,d}} = N\mathcal{SU}_{n,d}$. This implies that locally $\mathcal{M}_{n,d}$ is isometric to a product. Moreover, around any point $m \in \mathcal{SU}_{n,d}$ we also have that $\mathcal{M}_{n,d}$ is isometric to a product $U_m \times V_m$ of a neighborhood $U_m$ of $m$ in $\mathcal{SU}_{n,d}$ with another space $V_m$. Since $\mathcal{M}_{n,d}$ is hyper-K\"ahler, it is Ricci flat and hence this implies $U_m,V_m$ are both Ricci flat. As the point $m$ was arbitrary, this would imply that $\mathcal{SU}_{n,d}$ with its natural metric is Ricci flat, but this is impossible, since the anti-canonical bundle is ample \cite{ram}.
\end{proof}

\begin{corollary}
Suppose $n$ and $d$ are coprime. The Riemannian holonomy group of $\mathcal{M}_{n,d}$ is $Sp(m)$, $m = (n^2-1)(g-1)$.
\end{corollary}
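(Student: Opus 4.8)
The plan is to deduce the corollary from Proposition \ref{propconstantendo} via the holonomy principle and Berger's classification of irreducible holonomy groups. First I would record the setup: since $n$ and $d$ are coprime, $\mathcal{M}_{n,d} = \mathcal{M}_{n,d}^{\rm sm}$ is a smooth hyper-K\"ahler manifold of real dimension $4m$ with $m = (n^2-1)(g-1)$, and it is simply connected by Lemma \ref{lemsimplyconn}. Consequently the Riemannian holonomy group $G = Hol(\mathcal{M}_{n,d})$ equals the restricted holonomy group, and the existence of the parallel complex structures $I,J,K$ gives the inclusion $G \subseteq Sp(m)$ for free. The content of the corollary is therefore the reverse inclusion.

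The bridge to Proposition \ref{propconstantendo} is the holonomy principle: the algebra $\mathcal{C}$ of covariantly constant endomorphisms of $T\mathcal{M}_{n,d}$ is canonically identified with the commutant $\mathrm{End}_G(T_p\mathcal{M}_{n,d})$ of the holonomy representation at a base point $p$. By Proposition \ref{propconstantendo} this commutant is spanned by $I,J,K$ and the identity, so $\mathcal{C} \cong \mathbb{H}$ as a real algebra. I would extract two consequences. Since $\mathbb{H}$ is a division algebra it contains no idempotents other than $0$ and $Id$; a nontrivial $G$-invariant subspace of $T_p\mathcal{M}_{n,d}$ would yield the orthogonal projection onto it as a parallel, hence nontrivial, idempotent in $\mathcal{C}$, a contradiction. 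Thus the holonomy representation is irreducible, of quaternionic type. Moreover $\mathcal{M}_{n,d}$ cannot be flat, for then $G$ would be trivial and $\mathcal{C}$ would be all of $\mathrm{End}(\mathbb{R}^{4m})$, of dimension $(4m)^2 > 4$ (indeed $m \ge 6$ here).

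With irreducibility and non-flatness established I would invoke Berger's theorem in its local (curvature-level) form, which needs no completeness hypothesis. I would first eliminate the locally symmetric case: a hyper-K\"ahler manifold is Ricci-flat, and a Ricci-flat locally symmetric Riemannian manifold is flat, since its universal cover splits as a product of a flat factor with irreducible factors of compact or noncompact type, each of which is Einstein with nonzero Ricci curvature. As $\mathcal{M}_{n,d}$ is not flat, it is not locally symmetric. Berger's list then applies, and $G$ is one of $SO(4m)$, $U(2m)$, $SU(2m)$, $Sp(m)Sp(1)$, $Sp(m)$, $G_2$, $Spin(7)$. A dimension count shows that the only one of these contained in $Sp(m)$ and acting irreducibly on $\mathbb{R}^{4m}$ is $Sp(m)$ itself: the groups $U(2m)$, $SU(2m)$ and $Sp(m)Sp(1)$ are strictly larger than $Sp(m)$, while $G_2$ and $Spin(7)$ act only in dimensions $7$ and $8$. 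Hence $G = Sp(m)$, as claimed.

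The main obstacle is the careful invocation of Berger's theorem rather than any single hard computation: one must use the local form of the classification, since the hyper-K\"ahler metric on $\mathcal{M}_{n,d}$ is noncompact, and one must confirm that the constraints forced by Proposition \ref{propconstantendo} — namely $G \subseteq Sp(m)$ together with irreducibility, non-flatness, and quaternionic commutant — single out $Sp(m)$ uniquely. Once Proposition \ref{propconstantendo} is granted, every remaining step is essentially formal.
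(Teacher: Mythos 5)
Your proof is correct and follows essentially the same route as the paper: both deduce from Proposition \ref{propconstantendo} that the holonomy representation is irreducible (a nontrivial invariant subspace would yield a parallel projection operator outside $\mathrm{span}\{Id, I, J, K\}$) and then conclude $G = Sp(m)$ from the classification of holonomy groups contained in $Sp(m)$. The only difference is one of completeness rather than of method: the paper compresses the final step into the bare assertion that a proper closed holonomy subgroup of $Sp(m)$ must act reducibly, while you supply exactly the justification this assertion needs --- ruling out the locally symmetric case via Ricci-flatness and invoking the local form of Berger's classification --- so your write-up fills in the one point where the paper is terse.
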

\begin{proof}
Let $G \subseteq Sp(m)$ be the Riemannian holonomy group of $\mathcal{M}_{n,d}$ and $G^0 \subseteq G$ the identity component. Then $G^0$ is a closed Lie subgroup of $Sp(m)$ \cite{boli}. If $G^0$ is a proper subgroup then it must act reducibly on $T \mathcal{M}_{n,d}$. But this would contradict Proposition \ref{propconstantendo}, so in fact $G^0 = G = Sp(m)$.
\end{proof}

\begin{theorem}
Suppose $n$ and $d$ are coprime.
\begin{itemize}
\item{If $\Sigma$ does not admit an anti-automorphism, then every isometry of $\mathcal{M}_{n,d}$ preserves $I$. Therefore $Isom(\mathcal{M}_{n,d}) = Aut_{Isom}(\mathcal{M}_{n,d}) \cong \left( U(1) \times Aut( \mathcal{SU}_{n,d} ) \right)$.}
\item{If $\Sigma$ admits an anti-automorphism then the subgroup of isometries of $\mathcal{M}_{n,d}$ preserving $I$ has index $2$ in the isometry group of $\mathcal{M}_{n,d}$.}
\end{itemize}
\end{theorem}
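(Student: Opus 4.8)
The plan is to study how an arbitrary isometry acts on the parallel complex structures. By Proposition \ref{propconstantendo} the space of covariantly constant endomorphisms of $T\mathcal{M}_{n,d}$ is exactly $\langle Id, I, J, K\rangle$, and since any isometry $\phi$ preserves the Levi-Civita connection, conjugation $E \mapsto \phi_* \circ E \circ \phi_*^{-1}$ permutes this space while fixing $Id$. The induced action on the imaginary part $\langle I,J,K\rangle$ is an automorphism of the quaternion algebra, hence lies in $SO(3)$; this gives a homomorphism $R : Isom(\mathcal{M}_{n,d}) \to SO(3)$, $\phi \mapsto R_\phi$. For each $\phi$, the map $\phi$ is then a biholomorphism $(\mathcal{M}_{n,d}, I) \to (\mathcal{M}_{n,d}, R_\phi(I))$. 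Note that an isometry preserves $I$ precisely when $R_\phi(I) = I$, and the set of such isometries is exactly $Aut_{Isom}(\mathcal{M}_{n,d})$, which by Theorem \ref{theoremgroups} equals $U(1) \times Aut(\mathcal{SU}_{n,d})$.

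The central step is to show that $R_\phi(I) = \pm I$ for every isometry $\phi$; equivalently, that $I$ is, up to sign, the unique complex structure in the twistor family $\{aI + bJ + cK : a^2+b^2+c^2 = 1\}$ whose underlying complex manifold is biholomorphic to $(\mathcal{M}_{n,d}, I)$. To see this I would contrast two facts. On one hand $(\mathcal{M}_{n,d}, I)$ is the Higgs bundle moduli space, which contains positive-dimensional compact complex submanifolds, for instance the projective variety $\mathcal{SU}_{n,d}$ of Higgs bundles with vanishing Higgs field, or the generic (abelian variety) fibres of the Hitchin map. On the other hand I would invoke non-abelian Hodge theory: for $a \neq \pm 1$ the complex manifold $(\mathcal{M}_{n,d}, aI + bJ + cK)$ is biholomorphic to the de Rham (equivalently Betti) moduli space of flat $SL(n,\mathbb{C})$-connections, which is a smooth affine variety and therefore Stein, and a Stein manifold admits no compact complex submanifold of positive dimension. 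Thus $(\mathcal{M}_{n,d}, R_\phi(I))$ can be biholomorphic to $(\mathcal{M}_{n,d}, I)$ only if $R_\phi(I) = \pm I$. This identification of the distinguished complex structure is the main obstacle, since it requires genuine input from outside the twistor formalism; the remainder of the argument is formal.

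Granting this, $R$ lands in the stabiliser $O(2) \subset SO(3)$ of the line $\mathbb{R}\cdot I$, and composing with the sign homomorphism $O(2) \to \{\pm 1\}$ recording whether $I \mapsto I$ or $I \mapsto -I$ yields a homomorphism $\varepsilon : Isom(\mathcal{M}_{n,d}) \to \{\pm 1\}$ whose kernel is exactly $Aut_{Isom}(\mathcal{M}_{n,d})$. An isometry with $\varepsilon(\phi) = -1$ satisfies $\phi_* \circ I = -I \circ \phi_*$, i.e. it is an anti-automorphism, so by Theorem \ref{theoremantiauto} such an isometry can exist only when $\Sigma$ admits an anti-automorphism. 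In the first case, when $\Sigma$ has no anti-automorphism, $\varepsilon$ is trivial, whence $Isom(\mathcal{M}_{n,d}) = Aut_{Isom}(\mathcal{M}_{n,d}) \cong U(1) \times Aut(\mathcal{SU}_{n,d})$. In the second case I would exhibit an isometric anti-automorphism directly: the map $\hat{f}_L = \delta_L \circ f^*$ constructed in the proof of Theorem \ref{theoremantiauto} is, by the explicit description of $g$ in Section \ref{sechyperkg}, an isometry, since pullback by an anti-holomorphic isometry of $\Sigma$ together with dualising and tensoring by a flat unitary line bundle all preserve the $L^2$ metric. Hence $\varepsilon$ is surjective and $Aut_{Isom}(\mathcal{M}_{n,d}) = \ker \varepsilon$ has index $2$ in $Isom(\mathcal{M}_{n,d})$, as claimed.
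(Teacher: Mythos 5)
Your proposal is correct, and its skeleton coincides with the paper's: by Proposition \ref{propconstantendo} any isometry carries $I$ to a covariantly constant complex structure, hence into the sphere $\{aI+bJ+cK \mid a^2+b^2+c^2=1\}$; once this is sharpened to $\phi^*(I)=\pm I$, one direction of the dichotomy follows from Theorem \ref{theoremantiauto} and Theorem \ref{theoremgroups}, and the other from checking that $\hat{f}_L$ is an isometry. The one genuine difference is the middle step. The paper simply cites \cite{hit1} for the fact that $(\mathcal{M}_{n,d},I)$ is biholomorphic to no member of the twistor sphere other than $\pm I$; you prove it, via non-abelian Hodge theory: for $a\neq\pm 1$ the structure $aI+bJ+cK$ realises the Betti/de Rham moduli space, a smooth affine and hence Stein variety containing no positive-dimensional compact analytic subsets, whereas $(\mathcal{M}_{n,d},I)$ contains the projective variety $\mathcal{SU}_{n,d}$ (or the compact Hitchin fibres). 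This is the standard argument behind the cited result, so your account is more self-contained on this point; the price is that you need the non-abelian Hodge identification for \emph{every} latitude $a\in(-1,1)$, not just the equatorial structures $bJ+cK$ --- which is legitimate, being exactly what the family of flat connections $\nabla_A+\zeta^{-1}\Phi+\zeta\Phi^*$, $\zeta\in\mathbb{C}^*$, provides, but should be said explicitly since the most commonly quoted statement concerns $J$ alone. Your packaging through the homomorphisms $R\colon Isom(\mathcal{M}_{n,d})\to SO(3)$ and $\varepsilon\colon Isom(\mathcal{M}_{n,d})\to\{\pm 1\}$ is a cosmetic reorganisation of the same reasoning, and your justification that $\hat{f}_L$ preserves $g$ (pullback, adjoint, dualising and twisting by a unitary line bundle preserve the $L^2$ pairing) is at the same level of detail as the paper's ``it is easy to see''; the only slip there is calling $f$ an ``anti-holomorphic isometry of $\Sigma$'' --- no metric on $\Sigma$ is involved, only its conformal structure, which is all the formula for $g$ in Section \ref{sechyperkg} uses.
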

\begin{proof}
Let $\phi : \mathcal{M}_{n,d} \to \mathcal{M}_{n,d}$ be an isometry. Then $\phi^*(I)$ is a covariantly constant complex structure, hence by Proposition \ref{propconstantendo}, $\phi^*(I)$ belongs to the $2$-sphere of complex structures $\{ aI + bJ + cK \, | a^2 + b^2 + c^2 = 1 \, \}$. However, it is known that $I$ is not isomorphic to any complex structure in this $2$-sphere other than itself and $-I$ \cite{hit1}. It follows that either $\phi^*(I) = I$ or $-I$. If there exists an isometry $\phi$ for which $\phi^*(I) = -I$ then by Theorem \ref{theoremantiauto}, $\Sigma$ admits an anti-automorphism. Conversely if $\Sigma$ admits an anti-automorphism $f : \Sigma \to \Sigma$ then, as in the proof of Theorem \ref{theoremantiauto} we constructed an anti-automorphism $\hat{f}_L$ of $\mathcal{M}_{n,d}$. It is easy to see that this is an isometry.
\end{proof}

\begin{remark}\label{remfullisom}
In the case that $\Sigma$ admits an anti-automorphism $f : \Sigma \to \Sigma$ we can be more precise about the isometry group of $\mathcal{M}_{n,d}$. Recall as in the proof of Theorem \ref{theoremantiauto}, we constructed an anti-automorphism $\hat{f}_L : \mathcal{M}_{n,d} \to \mathcal{M}_{n,d}$ which is also an isometry. Then
\begin{equation*}
Isom(\mathcal{M}_{n,d}) = \left( U(1) \times Aut( \mathcal{SU}_{n,L_d} ) \right)  \cup \hat{f}_L \circ \left( U(1) \times Aut( \mathcal{SU}_{n,L_d} ) \right).
\end{equation*}
To completely describe the group structure of $Isom(\mathcal{M}_{n,d})$ one just needs to know (1) the element ${\hat{f}_L}^2 \in \left( U(1) \times Aut( \mathcal{SU}_{n,d} ) \right)$ and (2) the adjoint action of $\hat{f}_L$ on the subgroup $U(1) \times Aut( \mathcal{SU}_{n,d} )$. For (1) let $\sigma : \Sigma \to \Sigma$ be the automorphism $\sigma = f^2$ and set $M = L \otimes f^*(L^*)$. Then it is easy to see that $M^n \cong L_0 \otimes \sigma^*(L_0^*)$ and ${\hat{f}^2}_L$ is the automorphism of $\mathcal{M}_{n,d}$ given by $(E,\Phi) \mapsto (\sigma^*(E) \otimes M , \sigma^*(\Phi) \otimes Id )$. For (2) we find that the adjoint action of $\hat{f}_L$ on the factor $U(1)$ is complex conjugation. The adjoint action of $\hat{f}_L$ on $Aut(\mathcal{SU}_{n,d})$ can be easily determined from the description of $Aut(\mathcal{SU}_{n,d})$ given in Theorem \ref{theoremstableauto}.
\end{remark}


\bibliographystyle{amsplain}

\end{document}